\journal{Journal of Differential Equations}
\def \n {\nu}
\def \ve{\varepsilon}
\def \be{\begin{equation}}
\def \ee{\end{equation}}
\newtheorem{theorem}{Theorem}
\newtheorem{remark}{Remark}
\newtheorem{corollary}{Corollary}
\newtheorem{definition}{Def}
\newtheorem{proposition}{Proposition}
\newtheorem{lemma}{Lemma}
\renewenvironment{proof}{{\it Proof:}}{}
\begin{document}

\begin{frontmatter}

\title{Sharp interface  limit  in a phase field model of cell motility}
\tnotetext[mytitlenote]{The work of LB and VR was partially supported by NSF grants DMS-1106666 and DMS-1405769.
	The work of  MP was partially supported by the NSF grant DMS-1106666.
	The authors are grateful to I. Aronson and F. Ziebert for useful discussions on  the phase field model of cell motility introduced in their paper. The authors wish to thank M. Mizuhara who assisted in the proof-reading of the manuscript.}

\author[pennstate]{Leonid Berlyand}\ead{berlyand@math.psu.edu}
\author[pennstate]{Mykhailo Potomkin}\ead{mup20@ucs.psu.edu}
\author[iltpe]{Volodymyr Rybalko}\ead{vrybalko@ilt.kharkov.ua}


\address[pennstate]{Department of Mathematics, The Pennsylvania State University, University Park, PA 16802, USA}
\address[iltpe]{Mathematical Division, B. Verkin Institute for Low Temperature, Physics and Engineering
	of National Academy of Sciences of Ukraine,
	47 Lenin Ave., 61103 Kharkiv, Ukraine}

\begin{abstract}
We consider a system of two coupled parabolic PDEs introduced in \cite{ZieSwaAra11} to model motility 
of eukaryotic cells. We study the asymptotic behavior of solutions 
in the limit of a small parameter related to the width of the interface
in phase field function (sharp interface limit). 
We formally derive an equation of motion of the interface, which is mean curvature motion with an additional nonlinear term. 
In a 1D model parabolic problem we rigorously justify the sharp interface limit. To this end, a special form of asymptotic expansion is introduced to reduce analysis to a single nonlinear PDE. 
Further stability analysis  reveals a qualitative change in the behavior of the system for small and large values of the coupling parameter. 
Using numerical simulations we also show discontinuities of the  interface velocity  and hysteresis. Also, in the 1D case we establish nontrivial traveling waves 
when the coupling parameter is large enough.
%
%
%
%
%
%
%
%
\end{abstract}

\begin{keyword}
Allen-Cahn equation \sep  phase field model \sep  cell motility \sep traveling waves
\end{keyword}

\end{frontmatter}



\section{Introduction}
\label{intro}

The problem of cell motility  has been a classical subject in biology for  several centuries. 
It  dates back to the celebrated discovery by  van Leeuwenhoek in the 17th century  who  
drastically improved the microscope to the extent that he was able   to  observe motion of single celled organisms that moved 
due to contraction and extension.  Three centuries later this problem  continues to  attract  the  attention  of  biologists,   
biophysicists  and, more recently,  applied mathematicians.  A comprehensive  review  of  the mathematical modeling of 
cell motility can be found in \cite{Mog09}.

This work is motivated by the  problem of  motility (crawling motion) of eukaryotic cells on substrates.  The network of  actin  (protein) filaments (which is  a part of the cytoskeleton  in such cells)   plays an important role in cell motility.   We are concerned with cell shape dynamics, caused by extension of the front of the cell due to polymerization of the actin filaments  and contraction of the back of the cell due to detachment of  these filaments.  Modeling of  this process in full generality is at present a formidable challenge because  several  important  biological   ingredients  (e.g., regulatory pathways \cite{Mog09}) are not yet  well understood.

In recent biophysical studies  several simplified  {\it phase field models}  of cell motility have been proposed. Simulations performed for these models demonstrated   good agreement  with experiments (e.g., \cite{ZieSwaAra11,ShaRapLev10} and references therein).  Recall that phase field models are typically used to describe the  evolution of an interface between two phases (e.g., solidification or viscous fingering). The key ingredient of such models is  an auxiliary  scalar field,  which takes two  different values in   domains  describing  the two phases (e.g., $1$ and $0$) with a diffuse interface of a small  width. An alternative approach to cell motility involving  free boundary problems is developed in \cite{KerPinAllBarMarMogThe08,RubJacMog05,BarLeeAllTheMog15,RecTru13,RecPutTru15}. 

We consider the coupled system of parabolic PDEs, which is a modified version of the model 
from  \cite{ZieSwaAra11} in the diffusive scaling  ($t\mapsto \ve^2 t$, $x \mapsto \ve x$):
\begin{equation}
	\frac{\partial \rho_\ve}{\partial t}=\Delta \rho_\ve
	-\frac{1}{\ve^2}W^{\prime}(\rho_\ve)
	-P_\ve\cdot \nabla \rho_\ve +\lambda_\ve(t)
	\text{ in }\
	\Omega,
	\label{eq1}
\end{equation}
\begin{equation}
	\frac{\partial P_\ve}{\partial t}=\ve\Delta P_\ve -\frac{1}{\ve}P_\ve
	-\beta \nabla \rho_\ve
	\qquad\text{in}\ \Omega,
	\label{eq2}
\end{equation}
where
\begin{equation} \label{lagrange}
	\lambda_\ve(t)=\frac{1}{|\Omega|}\int_\Omega\left(\frac{1}{\ve^2}W^\prime(\rho_\ve)
	+ P_\ve\cdot \nabla \rho_\ve \right)\, dx.
\end{equation}
The unknowns here are the scalar phase field function $\rho_\ve$ and the orientation vector 
$P_\ve$;  $\Omega$ is a bounded domain in $\mathbb{R}^2$, $\lambda_\ve$ is the Lagrange multiplier responsible for preservation of volume. 
We study  solutions of system \eqref{eq1}-\eqref{lagrange} in the sharp interface limit, 
when the parameter $\ve>0$ (which is, loosely speaking, the width of the interface) tends to zero. 

While system \eqref{eq1}-\eqref{lagrange} represents a modified version of the model from \cite{ZieSwaAra11}, the main features of the original model are preserved. 
The volume preservation constraint in  \cite{ZieSwaAra11} is imposed by introducing a penalization parameter into the double well potential,
its role in
\eqref{eq1} is recast by the (dynamic) Lagrange multiplier $\lambda_\ve$ given by  \eqref{lagrange}. Both ways of introducing volume preservation are equivalent in the sharp interface limit, see \cite{Alf2010,AlfAli2014,BraBre2011}. Also, for technical simplicity we dropped two terms in the original equation of the orientation field. One of them, responsible for a stronger 
damping  in the phase $\rho_\ve \sim 0$, can be added to \eqref{eq2} without any 
qualitative changes, while the second one, the so-called $\gamma$-term, leads to an enormous 
technical complication, even existence is very hard to prove. Ref. \cite{ZieSwaAra11} qualifies 
this term as a symmetry breaking mechanism, which is important for initiation of motion. 
 Our study, however,  reveals another mechanism for breaking of symmetry in \eqref{eq1}-\eqref{eq2}, emanated 
from asymmetry of the potential $W(\rho)$ (see  Subsection \ref{SubsecMainRes}). That is, 
the effect of $\gamma$-term is replaced, to some extent, by  asymmetry of the potential.       

Heuristically, system \eqref{eq1}-\eqref{lagrange} describes the motion of a  interface caused by the competition between mean curvature motion (due to stiffness of interface) and the push of the  orientation field on the interface curve.  The main issue is to determine  the influence of  this competition  on  the  qualitative behavior of the  sharp interface solution.  The parameter $\beta>0$  models this competition 
which is why it  plays a key role in the analysis of system \eqref{eq1}-\eqref{lagrange}. 


\subsection{Techniques}
Recall  the Allen-Cahn equation  which is at the core of system \eqref{eq1}-\eqref{lagrange},
\begin{equation} \label{AC}
	\frac{\partial \rho_\ve}{\partial t}=\Delta \rho_\ve
	-{\frac{1}{\ve^2}}W^\prime(\rho_\ve), 
\end{equation}
where $W^\prime(\rho)$ is the derivative of a double equal well potential $W(\rho)$. We suppose that 
\begin{equation}
	\label{condpoten}
	W(\,\cdot\,)\in C^3(\mathbb{R}),\ W(\rho)>0\ \text{when} \ \rho\not\in\{0,1\}, \ W(\rho)=W^\prime(\rho)=0\ \text{at}\  \{0,1\},\ W^{\prime\prime}(0)>0,\ W^{\prime\prime}(1)>0,
\end{equation}
e.g. $W(\rho)=\frac{1}{4}\rho^2(\rho-1)^2$.  
Equation \eqref{AC}  was introduced in \cite{AllCah97} to  model  the motion of the phase-antiphase boundary (interface) between two grains in a solid material.
Analysis of \eqref{AC} as $\ve \to 0$  leads to the asymptotic solution that takes values $\rho_\ve\sim 0$ and $\rho_\ve\sim 1$ in the domains  corresponding to  two phases separated by an interface of width of order $\ve$,  the so-called  sharp interface.  Furthermore, it was shown that this sharp interface  obeys mean curvature motion.   Recall that  in this motion the normal component of the velocity of  each point of the surface  is equal to the mean curvature  of the surface at this point. This  motion  has been  extensively studied  in the geometrical community (e.g., \cite{Ham82,Hui84,Gra87,Bra78} and references therein).  It also  received significant attention  in
PDE  literature.  Specifically \cite{CheGigGot91}  and   \cite{EvaSpr91}  established existence of global viscosity  solutions (weak solutions) for the  mean curvature flow. Mean curvature motion of the interface in the limit $\ve \to 0$  was formally derived in \cite{Fif88},\cite{RubSterKel89} and  then justified in \cite{EvaSonSou92}  by using the viscosity solutions techniques. The limit $\ve\to 0$ was also studied for a stochastically perturbed Allen-Cahn equation \eqref{AC}  in \cite{KohOttRezVan06,OttWebWes13}.

Solutions of the stationary Allen-Cahn equation with the volume constraint were studied in \cite{Mod86} by $\Gamma$-convergence  techniques applied to the stationary variational   problem corresponding  to \eqref{AC}. It was established that the $\Gamma$-limiting functional is the interface perimeter  (curve length in 2D  or surface area  in higher dimensions).  Subsequently in the work
\cite{RubSte92}   an evolutionary  reaction-diffusion  equation with double-well potential   and  nonlocal term that describes  the volume constraint was studied.  The following asymptotic formula   for evolution of the interface  $\Gamma$ in the form of volume preserving mean curvature flow was formally derived in \cite{RubSte92}:
\begin{equation} \label{rs_1992}
	V=\kappa -\frac{1}{|\Gamma(t)|}\int_{\Gamma(t)}\kappa\, ds,
\end{equation}
where $V$ stands for the normal velocity of $\Gamma(t)$ with respect to the  inward normal,  $\kappa$
denotes the curvature of $\Gamma(t)$, $|\Gamma(t)|$ is the  curve length.
Formula \eqref{rs_1992} was rigorously justified in the radially symmetric  case in \cite{BroSto97} and in the general case in \cite{CheHilLog10}.

Three main approaches to the study of asymptotic behavior (sharp interface limit) of solutions of phase field equations and systems have been developed. 

When a comparison principle for solutions applies,  a PDE approach based on viscosity solutions techniques  was successfully used in \cite{EvaSonSou92,BarLio03,Gol97, LioKimSle04} and  other works.
This approach can not be applied to  the system \eqref{eq1}-\eqref{lagrange}, because 
\begin{itemize}
	\item equations \eqref{eq1}-\eqref{eq2} are coupled through spatial gradients,
	\item equation \eqref{eq1} contains the nonlocal (volume preservation) term $\lambda_\ve$ given by \eqref{lagrange}.
\end{itemize}

Another technique  used in such problems  is $\Gamma$-convergence (\textcolor{black}{see \cite{Ser10,KohOttRezVan06} and references therein}). This technique also does not work for the system \eqref{eq1}-\eqref{lagrange}. The standard Allen-Cahn equation \eqref{AC} is a gradient flow  (in $L^2$ metric) with Ginzburg-Landau energy functional, which is why one can use the $\Gamma$-convergence approach. However, there is no energy functional such that problem \eqref{eq1}-\eqref{lagrange}   can be  written as a  gradient flow.

When none of the above elegant tools apply, one can use  direct  construction of an asymptotic expansion followed by its justification via energy bounds \cite{MotSha95}.  In Allen-Cahn type problems it typically requires  a number of  terms (e.g., at least  five in \cite{CheHilLog10}) in the expansion. In this work we use some ingredients of this technique.  We  construct an asymptotic  formula with only two terms: the leading one and the corrector (see e.g., \eqref{eq_form}). The main ingredient is an appropriate choice of these terms which allows for good energy bounds for the corrector.  Furthermore  this choice   leads to a reduction of the  coupled system to a single singularly perturbed non-linear PDE 
which for $\ve \to 0$ provides the sharp interface limit.  This approach is rigorously justified in the 1D model problem, however we believe that  this justification  can be carried out in the 2D problem \eqref{eq1}-\eqref{lagrange}.   For  small $\beta$ it  is implemented  via the contraction mapping principle; for large $\beta$  it  requires   more subtle stability analysis of a semigroup generated by a nonlinear nonlocal operator.

\subsection{Main results}
\label{SubsecMainRes}
The  main objectives of this work are: prove well-posedness of \eqref{eq1}-\eqref{lagrange}, 
reveal  the effect of the coupling in  \eqref{eq1}-\eqref{eq2}  on the  sharp interface limit, 
study qualitative behavior of system  \eqref{eq1}-\eqref{eq2} versus  values of the parameter $\beta$.

The first main result, Theorem \ref{wp_theorem}, demonstrates that there is no finite time blow up and that the sharp interface property of the initial data propagates in time.  
Theorem \ref{wp_theorem}   establishes existence of  solutions  to problem \eqref{eq1}-\eqref{lagrange} on  the time-interval $[0,T]$  for any $T>0$  and  sufficiently small $\ve$, $\ve <\ve_0(T)$. It also shows that a sharp (width $\ve$)  interface at $t=0$ remains sharp for $t \in (0,T)$.   This is proved by combining a maximum principle with energy type bounds.

To study  how  coupling of equations \eqref{eq1}-\eqref{eq2} along with the nonlocal volume constraint  \eqref{lagrange} affect the sharp interface limit we use  formal asymptotic expansions following the method of \cite{Fif88}. In this way we derive the equation of motion for the sharp interface,
\begin{equation}\label{motion1}
	V=\kappa +\frac{1}{c_0}\Phi_\beta(V)-\frac{1}{|\Gamma(t)|}\int_{\Gamma(t)}\left(\kappa
	+\frac{1}{c_0}\Phi_\beta(V)\right)\,  ds,
\end{equation}
where $c_0$ is a constant determined by the potential $W(\rho)$  and the function $\Phi(V)$  is  a given function (obtained by solving \eqref{def_for_motion1}).


%

The parameter $\beta$ in \eqref{eq2} can be thought of as the strength of  coupling
in system  \eqref{eq1}-\eqref{lagrange}. If $\beta$ is small, then \eqref{eq1} and \eqref{motion1} can be viewed as a  perturbation of Allen-Cahn equation with volume preserving term 
and curvature driven motion \eqref{rs_1992}, respectively. Results of the work \cite{MizBerRybZha15}, which addresses  \eqref{motion1} for small (subcritical) values of $\beta$, show that curves 
evolving according to \eqref{motion1} behave similarly to those satisfying \eqref{rs_1992}: they become close to circles quite fast exhibiting a little shift compared with curvature driven motion.   
On the other hand, if $\beta$ is not small evolution of sharp interface changes dramatically. 
In this case the function $c_0V-\Phi_{\beta}(V)$ is no longer invertible and one can expect quite complicated behavior of the interface curve. As the first step to study this case, it is natural to look for solutions for \eqref{eq1}-\eqref{lagrange} with steady motion. We can predict existence of such solutions based on our results for a 1D analogue of \eqref{eq1}-\eqref{lagrange}.  We prove that in the 1D case there exist traveling wave solutions with nonzero velocities, provided that $\beta$ is large enough  and the potential $W(\rho)$ has certain asymmetry, e.g. $W(\rho)=\frac{1}{4}\rho^2(\rho-1)^2(\rho^2+1)$. 
Existence of such traveling waves is consistent with experimental observations of motility on keratocyte cells which exhibit self-propagation along the straight line maintaining the same shape over many times of its length \cite{KerPinAllBarMarMogThe08} .

Heuristically, for traveling waves with nonzero velocity, say $V_\ve>0$, 
the push of $P_\ve$ on the front edge of the interface must be stronger than its pullback on the rear edge. 
This asymmetry in $P_\ve$ comes forth with an asymmetry of $W(\rho)$. 
We show that the velocity $V=V_\ve$ 
solves simultaneously equations $c_0V=\Phi_\beta(V)-\lambda$ and $-c_0V=\Phi_\beta(-V)-\lambda$, 
up to a small error. These equations are  obtained in the sharp interface limit on the front and rear edges
of the interface, respectively; $-\Phi_\beta(-V)$  and $-\Phi_\beta(V)$ represent in these equations, loosely speaking, the push (and pullback) of $P_\ve$ on the front and rear edges.  Then eliminating $\lambda$ one derives $2 c_0 V=\Phi_\beta(V)-\Phi_\beta(-V)$, this yields the only solution $V=0$ unless the potential has certain asymmetry (for symmetric potentials, e.g., $W(\rho)=\frac{1}{4}\rho^2(\rho-1)^2$, one has $\Phi_\beta(V)=\Phi_\beta(-V)$). Theorem \ref{Traveltheorem}   justifies the equation $2 c_0 V=\Phi_\beta(V)-\Phi_\beta(-V)$ for velocities of traveling waves in the sharp interface limit $\ve\to 0$. Its proof is based on Schauder's fixed point theorem.

Finally, we study the 1D model parabolic problem without any restrictions on $\beta$, where the effects of curvature and volume preservation are mimicked by a given forcing term $F(t)$.  As already mentioned the main technical trick 
here is to introduce a special (two term) representation of solutions which allows us to reduce the study of the interface velocity to a single singularly perturbed nonlinear equation.  Linearization of this equation and spectral analysis of the corresponding generator lead to a notion of stable and unstable velocities. The main result here, Theorem  \ref{theorem:sharp_interface}, can be informally stated as follows. If the interface velocity $V_{\ve}$ belongs to the domain of stable 
velocities it keeps varying continuously obeying the law $c_0V_\ve(t)=\Phi_\beta(V_\ve(t))-F(t)+o(1)$ until it becomes unstable (if so).  This theoretical result is supplemented by numerical simulations which show that  interface velocities exhibit jumps 
and reveal existence of a hysteresis loop.  Also, our stability analysis predicts that stationary solutions of \eqref{eq1}-\eqref{lagrange} with circular shape of the phase field functions are 
unstable in the case of asymmetric potentials and large enough $\beta$. This conjecture is based on the fact that zero velocity is unstable in this case (see Remark \ref{CircularShapeUnstable}).


%
%
%
%
%
%
%

%
The paper is organized as follows.  Section \ref{well-posedness}  is devoted to the well-posedness of the problem \eqref{eq1}-\eqref{lagrange}.  
In Section \ref{formalderivation} the equation for the interface motion \eqref{motion1} is formally derived. Section \ref{TravelWaveSection} deals with traveling wave solutions.
Section  \ref{limit} contains the rigorous justification of the sharp interface limit in the context of the model 1D problem.

\section{Well-posedness of the problem and formal derivation of the sharp interface limit}


\subsection{Existence of the solution of \eqref{eq1}-\eqref{eq2} with $\ve$-transition layer and no finite time blow up}
\label{well-posedness}

In this section we consider  the system  \eqref{eq1}-\eqref{lagrange} 
supplemented with the Neumann and the Dirichlet boundary conditions on $\partial \Omega$ for $\rho_\ve$  and $P_\ve$, respectively,
\begin{equation}
\label{boundaryCond}
\partial_\nu \rho_\ve=0\ \text{and}\  P_\ve=0\ \text{on}\ \partial\Omega.
\end{equation}

Introduce the following energy-type 
functionals
\begin{equation}
\label{energy}
\begin{array}{l}
E_\ve(t):=\frac{\ve}{2} \int_\Omega |\nabla \rho_\ve(x,t)|^2dx+\frac{1}{\ve}\int_\Omega W( \rho_\ve(x,t) )dx,\\ \\
F_{\ve}(t):= \int_\Omega \Bigl(| P_\ve(x,t)|^2+|P_\ve(x,t)|^4\Bigr)dx.
\end{array}
\end{equation}
\noindent
Assyme that  system \eqref{eq1}-\eqref{eq2} is supplied with initial data that satisfy:
\begin{equation}
\label{rho}
-\ve^{1/4}< \rho_\ve (x,0)<1+\ve^{1/4},
\end{equation}
and
\begin{equation}
E_\ve(0)+F_\ve(0)\leq C.
\label{IniEnBound}
\end{equation}

The first  condition \eqref{rho} is a weakened form of a standard  condition  $0\leq \rho_\ve(x,0)\leq 1$ for the phase field variable. If $\lambda_\ve \equiv 0$, then the maximum principle implies $0\leq \rho_\ve(x,t)\leq 1$ for $t>0$.  The presence of  nontrivial  $\lambda_\ve$ leads to  an  ``extended interval"  for $ \rho_\ve$.\footnote{The exponent $1/4$ in \eqref{rho} can be replaced by any positive number less than $1/2$ as will be seen in the proof the next theorem, {see} Appendix A.3.} The second condition \eqref{IniEnBound} means
that at $t=0$ the function
$\rho_\ve$ has the structure of an ``$\ve$-transition layer", that is, the domain $\Omega$  consists of three subdomains:  one 
where $\rho_\ve\sim 1$ (inside  the cell),
another where $\rho_\ve \sim 0$ (outside the cell), and they are separated by a  transition layer of width $\ve$  (a diffusive interface). Furthermore, it can be shown that the  magnitude of the orientation field  $P_\ve$ is small everywhere except  the $\ve$-transition layer (see \eqref{4aprior}).

\begin{theorem} \label{wp_theorem} ({\it No finite time blow up}) If the initial data $\rho_\ve(x,0)$, $P_\ve(x,0)$ satisfy  \eqref{rho} and
	\eqref{IniEnBound}, then for any $T>0$ the solution of \eqref{eq1}-\eqref{eq2} $\rho_\ve$, $P_\ve$ with boundary conditions \eqref{boundaryCond} exists on the time interval
	$(0,T)$ for sufficiently small $\ve>0$, $\ve<\ve_0(T)$.
	Moreover, $-\ve^{1/4}\leq \rho_\ve (x,t)\leq 1+\ve^{1/4}$ and
	\begin{equation}\label{noblowup1}
	\ve \int_0^T\int_\Omega \Bigl(\frac{\partial\rho_\ve}{\partial t} \Bigr)^2dxdt\leq C,
	\quad
	E_\ve(t)+F_\ve(t)\leq C\quad \forall t\in(0,T),
	\end{equation}
	where $C$ is independent of $t$ and $\ve$.
\end{theorem}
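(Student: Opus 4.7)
The overall strategy is the standard parabolic continuation scheme: produce a local-in-time classical solution, derive $\ve$-uniform a priori estimates valid as long as the solution exists, and extend to the full interval $(0,T)$. Local existence can be obtained by a fixed-point iteration in parabolic H\"older spaces, treating $\lambda_\ve(t)$ as a time-dependent source determined by the previous iterate and applying Schauder estimates to the decoupled linear problems for $\rho_\ve$ and $P_\ve$. The real content is in the a priori estimates, which I would run as a coupled bootstrap between the pointwise bound on $\rho_\ve$ and the energy bounds on $E_\ve+F_\ve$.

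The pointwise bound is obtained by a maximum-principle argument that hinges on a careful estimate of $\lambda_\ve$. Using the near-well property $|W'(\rho)|^2\le C\,W(\rho)$ (which follows from \eqref{condpoten} on the range where $\rho_\ve$ lives), Cauchy-Schwarz gives
\begin{equation*}
\frac{1}{\ve^2}\int_\Omega |W'(\rho_\ve)|\,dx\le \frac{C}{\ve^2}\Bigl(\int_\Omega W(\rho_\ve)\,dx\Bigr)^{1/2}\le C\,\ve^{-3/2}E_\ve(t)^{1/2},
\end{equation*}
while the transport contribution is bounded by $\|P_\ve\|_2\|\nabla\rho_\ve\|_2\le C\ve^{-1/2}(E_\ve F_\ve)^{1/2}$ and is subdominant. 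Together these yield $|\lambda_\ve(t)|\le C\ve^{-3/2}$ whenever $E_\ve+F_\ve$ is bounded. At an interior maximum of $\rho_\ve$ where $\rho_\ve=1+\ve^{1/4}$, one has $\nabla\rho_\ve=0$, $\Delta\rho_\ve\le 0$, and $W'(\rho_\ve)\approx W''(1)\ve^{1/4}$, so \eqref{eq1} gives $\partial_t\rho_\ve\le -cW''(1)\ve^{-7/4}+\lambda_\ve<0$ for small $\ve$ since $\ve^{-7/4}\gg\ve^{-3/2}$; the minimum side at $-\ve^{1/4}$ is symmetric. Notice that this is precisely why the exponent $1/4$ in \eqref{rho} can be replaced by any number less than $1/2$.

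The energy estimates themselves proceed by multiplying \eqref{eq1} by $\ve\partial_t\rho_\ve$ and integrating: the nonlocal term drops by volume preservation $\int_\Omega\partial_t\rho_\ve\,dx=0$, leaving
\begin{equation*}
\ve\int_\Omega(\partial_t\rho_\ve)^2\,dx+\frac{d}{dt}E_\ve(t)=-\ve\int_\Omega P_\ve\cdot\nabla\rho_\ve\,\partial_t\rho_\ve\,dx.
\end{equation*}
Testing \eqref{eq2} against $P_\ve$ and against $|P_\ve|^2P_\ve$ produces analogous evolution equations for $\int|P_\ve|^2$ and $\int|P_\ve|^4$ with strong dissipation of order $\ve^{-1}\int|P_\ve|^2$ and forcing of the form $\beta\int P_\ve\cdot\nabla\rho_\ve$ that can be absorbed by the dissipation and $E_\ve$; a suitable linear combination of these inequalities with the $\rho_\ve$-estimate is then closed by Gronwall, yielding both the pointwise-in-$t$ bound $E_\ve(t)+F_\ve(t)\le C$ and, as a byproduct, the time-integrated bound on $\ve(\partial_t\rho_\ve)^2$ in \eqref{noblowup1}.

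The chief obstacle is the cross term $\ve\int|P_\ve|^2|\nabla\rho_\ve|^2$ left over after Cauchy-Schwarz in the $\rho_\ve$-energy estimate: it is not dominated by $E_\ve$ alone and blocks a naive decoupling of the two energy estimates. I would handle it by combining H\"older's inequality with the $L^4$ control on $P_\ve$ coming from $F_\ve$ and an $L^4$ bound on $\nabla\rho_\ve$ obtained via parabolic smoothing from \eqref{eq1} and Gagliardo-Nirenberg in two dimensions; alternatively one may use \eqref{eq2} to substitute for $\nabla\rho_\ve$ in terms of $\beta^{-1}(\ve\Delta P_\ve-\partial_tP_\ve-\ve^{-1}P_\ve)$ and produce cancellations against the $P_\ve$-energy. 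Tracking the powers of $\ve$ carefully through this chain, and verifying that the resulting differential inequality for $E_\ve+F_\ve$ is of Gronwall type uniformly as $\ve\to0$, is the genuinely delicate part of the argument.
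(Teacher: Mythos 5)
Your overall skeleton --- local existence plus a continuation argument coupling a maximum-principle bound on $\rho_\ve$ (via $|\lambda_\ve|\lesssim \ve^{-3/2}$) with energy estimates for $E_\ve$ and $F_\ve$ obtained by testing \eqref{eq1} with $\partial_t\rho_\ve$ and \eqref{eq2} with $P_\ve$ and $|P_\ve|^2P_\ve$ --- is the same as the paper's, and those parts are sound. But the proof has a genuine gap exactly at the point you yourself flag as ``the genuinely delicate part'': the closure of the cross term $\ve\int_\Omega|P_\ve|^2|\nabla\rho_\ve|^2\,dx$. You offer two possible routes and verify neither. The Gagliardo--Nirenberg route requires an $L^4$ bound on $\nabla\rho_\ve$, hence control of $\|\Delta\rho_\ve\|_{L^2}$ pointwise in time; substituting \eqref{eq1} for $\Delta\rho_\ve$ there reintroduces $\|P_\ve\cdot\nabla\rho_\ve\|_{L^2}\le\|P_\ve\|_{L^4}\|\nabla\rho_\ve\|_{L^4}$ and a term $\ve^{-2}\|W'(\rho_\ve)\|_{L^2}$ whose powers of $\ve$ must be tracked against the only available pointwise bound $\|\nabla\rho_\ve\|_{L^2}^2\le 2E_\ve/\ve$; none of this is checked. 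The second route (solving \eqref{eq2} for $\nabla\rho_\ve$) introduces $\ve\Delta P_\ve$ and $\partial_tP_\ve$, for which you have no a priori control at all. As written, the differential inequality for $E_\ve+F_\ve$ is not closed.

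The paper's resolution is concrete and different from both of your suggestions: using the already-established bound $|\rho_\ve|\le 2$, one integrates by parts, $\int|P_\ve|^2|\nabla\rho_\ve|^2\,dx=-\int\rho_\ve\,{\rm div}(|P_\ve|^2\nabla\rho_\ve)\,dx\le 2\int\bigl(|P_\ve|^2|\Delta\rho_\ve|+|\nabla|P_\ve|^2|\,|\nabla\rho_\ve|\bigr)dx$, then substitutes \eqref{eq1} for $\Delta\rho_\ve$ (see \eqref{raspisali}); the resulting terms $\ve^{-2}\int|P_\ve|^4$ and $\int|\nabla|P_\ve|^2|^2$ are then absorbed by the dissipation $\frac1\ve\int|P_\ve|^4$ and $\ve\int|\nabla|P_\ve|^2|^2$ that comes precisely from testing \eqref{eq2} with the weighted combination $2kP_\ve+4|P_\ve|^2P_\ve$ and choosing $k$ large (inequality \eqref{4aprior}). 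A second, smaller omission: the $\lambda_\ve\int|P_\ve|^2$ contribution produces a \emph{quadratic} term $E_\ve^2$, so the final inequality is $\dot G_\ve\le CG_\ve+\ve G_\ve^2$ \eqref{dif_ineq}, not a linear Gronwall inequality; it is closed by a supersolution argument valid only for $\ve<\ve_0(T)$, which is where the $T$-dependence of $\ve_0$ in the statement actually comes from. Your proposal does not account for this nonlinearity.
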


\begin{remark}  This theorem implies that if the initial data are well-prepared  in the sense of  \eqref{rho}-\eqref{IniEnBound},  then for $0<t <T$  the solution exists and  has the structure of  an $\ve$-transition layer.  Moreover, the bound  on initial data \eqref{rho}   remains true for $t>0$. While it relies on a maximum principle  argument, it also  requires additional estimates on $\lambda_\ve$ as seen from \eqref{max_prl} below.
\end{remark}

\noindent \begin{proof}$\;$\\
	 First multiply  \eqref{eq1} by $\partial_t \rho_\ve$ and integrate over $\Omega$:
	\begin{equation}
	\begin{aligned}
	\int_\Omega |\partial_t \rho_\ve|^2dx+\frac{\rm d}{{\rm d} t}\int_\Omega\bigl(\frac{1}{2}|\nabla \rho_\ve|^2+\frac{1}{\ve^2}W(\rho_\ve)\bigr)dx
	&= -\int_\Omega P_\ve\cdot \nabla \rho_\ve\, \partial_t\rho_\ve dx\\
	&\leq
	\frac{1}{2}\int_\Omega|\partial_t \rho_\ve|^2dx+\frac{1}{2}\int_\Omega |P_\ve|^2\, |\nabla \rho_\ve|^2dx.
	\label{1aprior}
	\end{aligned}
	\end{equation}
	
	Here we used the fact that, due to \eqref{lagrange}, the integral of $\partial_{t}\rho_{\varepsilon}$ over $\Omega$ is zero and thus
	\begin{equation*}
	\int\limits_\Omega \lambda_{\varepsilon}(t)\partial_{t}\rho_{\ve}dx=0.
	\end{equation*}
	Next, using the maximum principle in
	\eqref{eq1} we get:
	\begin{equation}
	\label{max_prl}
	-2\ve^2 \sup_{\tau\in(0,t]} |\lambda_\ve(\tau)| \leq \rho_\ve \leq 1+2\ve^2 \sup_{\tau\in(0,t]} |\lambda_\ve(\tau)|.
	\end{equation}
	Let $T_\ve>0$ be the maximal time such that
	\begin{equation}
	\label{min_max}
	-\ve^{1/4}\leq \rho_\ve\leq 1+\ve^{1/4}, \quad \text{when}\ t\leq T_\ve,
	\end{equation}
	and from now on assume that $t\leq T_\ve$.
	
	Using \eqref{1aprior}, \eqref{min_max} and integrating by parts we obtain
	\begin{equation}\label{2aprior}
	\frac{d}{{ d}t}
	E_{\varepsilon}+\frac{\varepsilon}{4}\int_{\Omega}|\partial_t \rho_{\ve}|^2dx\leq \varepsilon\int \left(|P_{\varepsilon}|^2|\Delta\rho_{\ve}|+|\nabla |P_{\ve}|^2||\nabla \rho_{\ve}|\right)dx
	\end{equation}
	
	We proceed by deriving an upper bound for the integral in the right hand side of (\ref{2aprior}). By (\ref{eq1}) we have
	\begin{equation}
   \begin{aligned}
	\int_\Omega(|\Delta \rho_\ve|\, |P_\ve|^2 +|\,\nabla |P_\ve|^2\,|\,\,|\nabla \rho_\ve|)&dx\leq \int_\Omega |\partial_t \rho_\ve| |P_\ve|^2dx
	+\int_\Omega |P_\ve\cdot \nabla\rho_\ve| |P_\ve|^2dx\\
	&+\int_\Omega |\nabla \rho_\ve| |\,\nabla |P_\ve|^2|dx
	+\frac{1}{\ve^2} \int_\Omega |W^\prime(\rho_\ve)| |P_\ve|^2dx+|\lambda_\ve|\int_\Omega |P_\ve|^2dx
	=:\sum\limits_{i=1}^{5}I_i.
\end{aligned}
	\label{raspisali}
	\end{equation}
	The following bounds are obtained  by routine application of the Cauchy-Schwarz and Young's inequalities.
	For the sum of the first three terms in (\ref{raspisali}) we get,
	$$
	\sum_1^3I_i\leq \ve \int_\Omega(\partial_t \rho_\ve)^2dx + \ve\int_\Omega |P_\ve|^2 \, |\nabla\rho_\ve|^2dx+
	\frac{1}{2\ve}
	\int_\Omega |P_\ve|^4dx
	+ \int_\Omega|\,\nabla \bigl|P_\ve|^2\bigr|^2dx+\frac{1}{\ve}E_\ve.
	$$
	Since $(W^\prime(\rho_\ve))^2\leq C W(\rho_\ve)$ we also have
	$$
	\frac{1}{\ve^2}\int_\Omega|W^\prime(\rho)|\, |P_\ve|^2dx
	\leq \frac{C}{\ve^2}\int_\Omega W(\rho)dx+\frac{1}{2\ve^2}\int_\Omega|P_\ve|^4dx\leq \frac{C}{\ve}E_\ve+\frac{1}{2\ve^2}
	\int_\Omega |P_\ve|^4dx.
	$$
	Finally, in order to bound $I_5$ we first derive, 
	\begin{equation}
	\begin{aligned}
	|\lambda_\ve(t)|\leq\frac{C}{\ve^2}\Bigl(\int_\Omega W(\rho_\ve)dx\Bigr)^{1/2}
	+\Bigl(\int_\Omega |\nabla \rho_\ve|^2dx\Bigr)^{1/2}\Bigl(\int_\Omega | P_\ve|^2dx\Bigr)^{1/2}
	\leq\frac{C}{\ve} \Bigl(\frac{ E_\ve}{\ve}\Bigr)^{1/2}+\Bigl(\frac{2 E_\ve}{\ve}\int_\Omega | P_\ve|^2dx\Bigr)^{1/2},
	\end{aligned}
	\label{lambound}
	\end{equation}
	then
	\begin{eqnarray*}
		I_5&\leq&
		\frac{C}{\ve}\left(\frac { E_\ve}{\ve}\right)^{1/2} \int_\Omega |P_\ve|^2dx+
		\left(\frac{2E_\ve}{\ve}\right)^{1/2}\left(\int_\Omega |P_\ve|^2dx\right)^{3/2}\\&\leq&
		\frac{C}{\ve} E_\ve+\frac{1}{2\ve^2}\int_\Omega |P_\ve|^4dx+
		E_\ve^2 +\frac{C}{\ve^{2/3}}\int_\Omega |P_\ve|^4dx.
	\end{eqnarray*}
	Thus,
	\begin{equation*}
		\sum_1^5 I_i
		\leq
		\frac{C}{\ve} E_\ve +E_\ve^2 +\frac{1+O(\ve)}{\ve^2}\int_\Omega |P_\ve|^4dx +
		\ve \int_\Omega(\partial_t \rho_\ve)^2dx
		+ \ve\int_\Omega |P_\ve|^2 \, |\nabla\rho_\ve|^2dx
		+\int_\Omega|\,\nabla \bigl|P_\ve|^2\bigr|^2dx,
	\end{equation*}
	and using this inequality, (\ref{raspisali}) and (\ref{min_max}) in (\ref{2aprior}),
	then substituting the resulting bound in (\ref{1aprior}) we obtain, for sufficiently small $\ve$,
	\begin{equation}
	\label{3aprior}
	\frac{1}{4}\int_\Omega |\partial_t \rho_\ve|^2dx+
	\frac{\rm d}{{\rm d} t}\frac{E_\ve}{\ve}
	\leq \frac{C}{\ve}E_\ve+ E_\ve^2+\frac{1}{\ve^2}\int_\Omega |P_\ve|^4dx
	+\int_\Omega|\,\nabla \bigl|P_\ve|^2\bigr|^2dx.
	\end{equation}
	
    Now we obtain a bound for the last two terms in  (\ref{3aprior}).
	Taking the scalar product of \eqref{eq2} with $2k P_\ve+4|P_\ve|^2 P_\ve$, $k>0$, integrating over $\Omega$ and using \eqref{min_max} we get
	\begin{equation*}
	\begin{aligned}
\frac{d}{{ d} t}\int_\Omega
(k|P_\ve|^2+|P_\ve|^4)dx&+\ve\int_\Omega (2k |\nabla P_\ve|^2+ 4|\nabla P_\ve|^2\,|P_\ve|^2+2\bigl|\nabla |P_\ve|^2\bigr|^2)dx
+\frac{2}{\ve}\int_\Omega (k|P_\ve|^2+2|P_\ve|^4)dx
\\
&= -2\beta k \int_\Omega P_\ve\cdot \nabla\rho_\ve dx+4\beta\int_\Omega\rho_\ve\,{\rm div}(P_\ve|P_\ve|^2)dx
\\
&\leq k C\ve\int_\Omega|\nabla \rho_\ve|^2 dx+\frac{k}{\ve}\int_\Omega |P_\ve|^2 dx
+\ve\int_\Omega |\nabla P_\ve|^2\,|P_\ve|^2 dx+\frac{C_1}{\ve} \int_\Omega |P_\ve|^2 dx.
	\end{aligned}
	\end{equation*}
	We chose $k:=C_1+1$ to obtain
	\begin{equation}
	\label{4aprior}
	\ve\int_\Omega \bigl|\nabla |P_\ve|^2\bigr|^2dx+\frac{1}{\ve}\int_\Omega |P_\ve|^4dx\leq C E_\ve-
	\frac{ d}{{ d} t}\int_\Omega
	(k|P_\ve|^2+|P_\ve|^4)dx.
	\end{equation}
	
 Finally, introducing $G_\ve=E_\ve+\int_\Omega(4k|P_\ve|^2+|P_\ve|^4)\, dx$, by (\ref{3aprior}) and (\ref{4aprior})
	we have the differential
	inequality,
	\begin{equation}
	\label{dif_ineq}
	\frac{{ d} G_\ve}{{ d} t}\leq C G_\ve+\ve G_\ve^2,
	\end{equation}
	with a constant $C>0$ independent of $\ve$. Considering the bounds
	on the initial data and assuming that $\ve$ is sufficiently small,
	one can easily construct  a  bounded  supersolution $\tilde G$ of
	(\ref{dif_ineq}) on $[0,T]$ such that $\tilde G(0)\geq G_\ve$. We
	now have,  $G_\ve\leq C$ on $[0,T_\ve]$ for sufficiently small
	$\ve$. By  (\ref{max_prl}) and (\ref{lambound}) we then conclude
	that $T_\ve$ in (\ref{min_max}) actually coincides  with $T$ when
	$\ve$ is small. The theorem is proved. \qed
\end{proof}

\subsection{Formal derivation of the  Sharp Interface Equation \eqref{motion1}}\label{formalderivation}

In this section we formally derive equation \eqref{motion1} for the 2D   system  \eqref{eq1}-\eqref{eq2}.
While the derivation is analogous  to the single  Allen-Cahn equation (e.g., \cite{RubSterKel89},
\cite{CheHilLog10}), the gradient coupling in \eqref{eq1}-\eqref{eq2} results in a nonlinear term that modifies the mean curvature motion. 

Assume that that initial data $\rho_{\ve}(x,0)$ converge to the characteristic function of a smooth subdomain $\omega_0\subset \Omega$ as $\ve\to 0$. Next we want to describe the evolution of the interface $\Gamma(t)=\partial \omega_t$ with $t$, where $\omega_t$ is the support of $\lim\limits_{\ve\to 0} \rho_\ve(x,t)$.  We will assume  that
the initial data coincide with initial values of asymptotic expansions for $\rho_\ve$ and $P_\ve$ to be constructed below.


Let $X_0(s,t)$ be a parametrization of $\Gamma(t)$. In a vicinity of $\Gamma(t)$  the parameters $s$ and the  signed distance   $r$ to $\Gamma(t)$  will be used as local coordinates, so that
\begin{equation}\nonumber
x=X_0(s,t)+r \n(s,t)=X(r,s,t), \qquad \text{where  $\n$ is an inward normal to $\Gamma(t)$.}
\end{equation}
The inverse mapping to $x=X(r,s,t)$ is given by
\begin{equation}\nonumber
r=\pm\text{dist}(x,\Gamma(t)),\;\;s=S(x,t),
\end{equation}
where in the formula for $r$ we choose $+$ if $x\in \omega_t$ and $-$, if $x\notin \omega_t$.
Recall that $\Gamma(t)$ is the limiting location of interface as $\ve \to 0$. 
Next we seek $\rho_\ve$ and $P_{\ve}$ in the following forms in local coordinates $(r,s)$:
\begin{equation}
\rho_\ve(x,t)=\tilde{\rho}_\ve \left(\frac{r(x,t)}{\varepsilon},S(x,t),t\right)\text { and }P_\ve(x,t)=\tilde{P}_\ve \left(\frac{r(x,t)}{\varepsilon},S(x,t),t\right).
\end{equation}

Introduce asymptotic expansions in local coordinates:
\begin{eqnarray} \label{expansion_5}
\tilde\rho_{\varepsilon}(z,s,t)&=&\theta_0(z,s,t)+\varepsilon \theta_{1}(z,s,t)+...\\
\tilde P_{\varepsilon}(z,s,t)&=&\Psi_0(z,s,t)+\dots
\\
\lambda_{\varepsilon}(t)&=&\frac{\lambda_0(t)}{\ve}+
\lambda_1(t)+\ve\lambda_2(t)+...
\label{expansion9}
\end{eqnarray}
Now, substitute \eqref{expansion_5}-\eqref{expansion9} into \eqref{eq1} and \eqref{eq2}. Collecting terms with likewise powers of $\ve$ ($\ve^{-2}$ and $\ve^{-1}$) and equating them to zero we successively get,
\begin{equation}\label{theta_0z}
\frac{\partial^2 \theta_0}{\partial z^2}=W'(\theta_0),
\end{equation}
and 
\begin{eqnarray}
-\frac{\partial^2 \theta_1}{\partial z^2}+W''(\theta_0)\theta_1&=&V_0\frac{\partial\theta_0}{\partial z}-\frac{\partial \theta_0}{\partial z}\kappa(s,t)-(\Psi_0\cdot \n)\frac{\partial \theta_{0}}{\partial z}+\lambda_{0}(t),\label{solv_for_theta_1}\\
-V_0\frac{\partial \Psi_0}{\partial z}&=& \frac{\partial^2 \Psi_0}{\partial z^2}-\Psi_0-\beta \frac{\partial \theta_0}{\partial z}\n,\label{def_of_psi}
\end{eqnarray}
where $\kappa(s,t)$ is the curvature of $\Gamma_0(t)$ and $V_0(t):=-\partial_t r$ is the limiting velocity. The curvature $\kappa$ appears in the equation when one rewrites the Laplace operator in \eqref{eq1} in local coordinates $(r,s)$. 

It is well-known that 
there exists a standing wave solution $\theta_0(z)$ of \eqref{theta_0z} which tends to $1$ as $z\to \infty$ and to $0$ as $z\to-\infty$, respectively. Moreover, all derivatives of the function $\theta_0(z)$  exponentially decay to $0$ as $|z|\to\infty$ and $\theta_0'(z)$ is an eigenfunction of the linearized Allen-Cahn operator $\mathcal{L}u:=-u''+W''(\theta_0)u$ corresponding to the eigenvalue $0$.  Then multiplying \eqref{solv_for_theta_1} by $\theta_0'(z)$ and integrating over $z$ we are lead to the solvability condition for 
\eqref{solv_for_theta_1}:
%
%
%
\begin{equation}\label{formalmotion1}
c_0 V_0(s,t)=c_0 \kappa(s,t)+\int (\Psi_0\cdot \n)\left(\frac{\partial \theta_0}{\partial z}\right)^2 dz
- \lambda_0(t), \text { where }c_0=\int_{\mathbb R}\left(\frac{\partial \theta_0}{\partial z}\right)^2 dz.
\end{equation}
Next we obtain the formula for $\lambda_0(t)$. It follows from  \eqref{lagrange} that $\int_{\Omega}\partial_t \rho_{\ve}=0$. Substitute expansion \eqref{expansion_5}  for $\rho_{\ve}$ into $\int_{\Omega}\partial_t \rho_{\ve}=0$ and take into account the fact that $$\partial_t\rho_{\ve}=-\theta_0'\left(\frac{r}{\ve}\right)\frac{V_0(s,t)}{\ve}+O(1).$$ Thus, in order to satisfy the condition  $\int_{\Omega}\partial_t \rho_{\ve}=0$ to the leading order, $V_0(s,t)$ must have $$\int V(s,t)|\frac{\partial }{\partial s}X_0(s,t)|ds~=~0.$$ 
Using this fact and integrating \eqref{formalmotion1} with respect to $s$ with the weight $|\frac{\partial }{\partial s}X_0(s,t)|$, we get
\begin{equation}\label{def_lambda_0}
\lambda_0(t)=\int \left\{c_0 \kappa(s,t)+\int (\Psi_0\cdot \n)\left(\frac{\partial \theta_0}{\partial z}\right)^2 dz\right\} |\frac{\partial }{\partial s}X_0(s,t)|ds.
\end{equation}

Finally, the unique solution of \eqref{def_of_psi}  is given by $\Psi_0(z,s,t)=\psi(z,V_0(s,t))\n (s,t)$ where $\psi=\psi(z,V)$ is the unique (bounded) solution of 
\begin{equation}\label{psi_in_formal}
\partial^2_z\psi +V\partial_z \psi-\psi-\beta\theta_0'=0.
\end{equation}
The representation $\Psi_0(z,s,t)=\psi(z;V_0(s,t))\n (s,t)$ yields 
\begin{equation}\label{def_for_motion1}
\int \Psi_0 \cdot \n (\theta_0')^2 dz=\Phi_\beta (V) : = \int \psi(z,V) (\theta_0^\prime)^2 dz,
\end{equation}
where we  have also taken into account the linearity of \eqref{psi_in_formal} in $\beta$.
Now substitute \eqref{def_for_motion1} and \eqref{def_lambda_0} into equation \eqref{formalmotion1} to  conclude the derivation of sharp interface equation \eqref{motion1}.


\section{Traveling waves in 1D}
\label{TravelWaveSection}

In this section we study special solutions of system \eqref{eq1}-\eqref{eq2} in the 1D case. Specifically, we look for traveling waves (traveling pulses). Therefore it is natural to switch to the entire space  $\mathbb{R}^1$ setting. 
We show that,  not surprisingly, there are nonconstant stationary solutions, standing waves. However, we prove  that apart from standing waves there are true traveling waves 
 when the parameter $\beta$ is large enough and the potential $W(\rho)$ has certain asymmetry,  e.g. $W(\rho)=\frac{1}{4}(\rho^2+\rho^4)(\rho-1)^2$, see also the discussion in Remark \ref{KosoiPotential}. 


We are interested in (localized in some sense) solutions of \eqref{eq1}-\eqref{eq2} with
$\rho_{\ve} =\rho_{\ve}(x-Vt)$, $P_{\ve}=P_{\ve}(x-Vt)$. They satisfy the following  stationary
equations with unknown constant velocity $V$ and constant $\lambda$:
\begin{eqnarray}
0&=&\partial^2_x \rho_{\ve} +V\partial_x\rho_{\ve}-\frac{W'(\rho_{\ve})}{\ve^2}-P_{\ve}\partial_x\rho_{\ve} +\dfrac{\lambda}{\ve}\label{tw_rho},\\
0&=&\ve \partial_x^2P_{\ve}+V\partial_xP_{\ve} -\frac{1}{\ve} P_{\ve} -\beta\partial_x\rho_{\ve}. \label{tw_P}
\end{eqnarray}

Let us postulate an ansatz for the phase field function $\rho_\ve$. Given $a>0$, we look for solutions of \eqref{tw_rho}-\eqref{tw_P} for sufficiently small
$\ve>0$ with $\rho_\ve$ having the form
\begin{equation}
\label{repr}
\rho_\ve=\phi_\ve+\ve\chi_\ve+ \ve u,
\end{equation}
where 
$$
\phi_\ve:=\theta_0((x+a)/\ve)\theta_0((a-x)/\ve),\quad  \chi_\ve:=\chi^{-}_\ve+(\chi^{+}_\ve-\chi^{-}_\ve)\phi_\ve,
$$ 
constants $\chi^{-}_\ve$ and $\chi^{+}_\ve$ are the smallest (in absolute value) solutions of
$W^\prime(\ve\chi^{-}_\ve)=\ve\lambda$ and $W^\prime(1+\ve\chi^{+}_\ve)=\ve\lambda$, respectively, and $u$ is the
new unknown function vanishing at $\pm\infty$. The role of the constant $\chi^{-}_\ve$ in \eqref{repr}
is to amend the first term of the representation so that $u$ decays at $\pm \infty$. Similarly, $\chi^{+}_\ve$ is introduced to end up with $u$ which is exponentially close to one in $(-a,a)$ away 
from points $\pm a$ (see also Fig.~\ref{fig:sketch_chi}).

\begin{figure}
	\begin{center}
		\includegraphics[width=\textwidth]{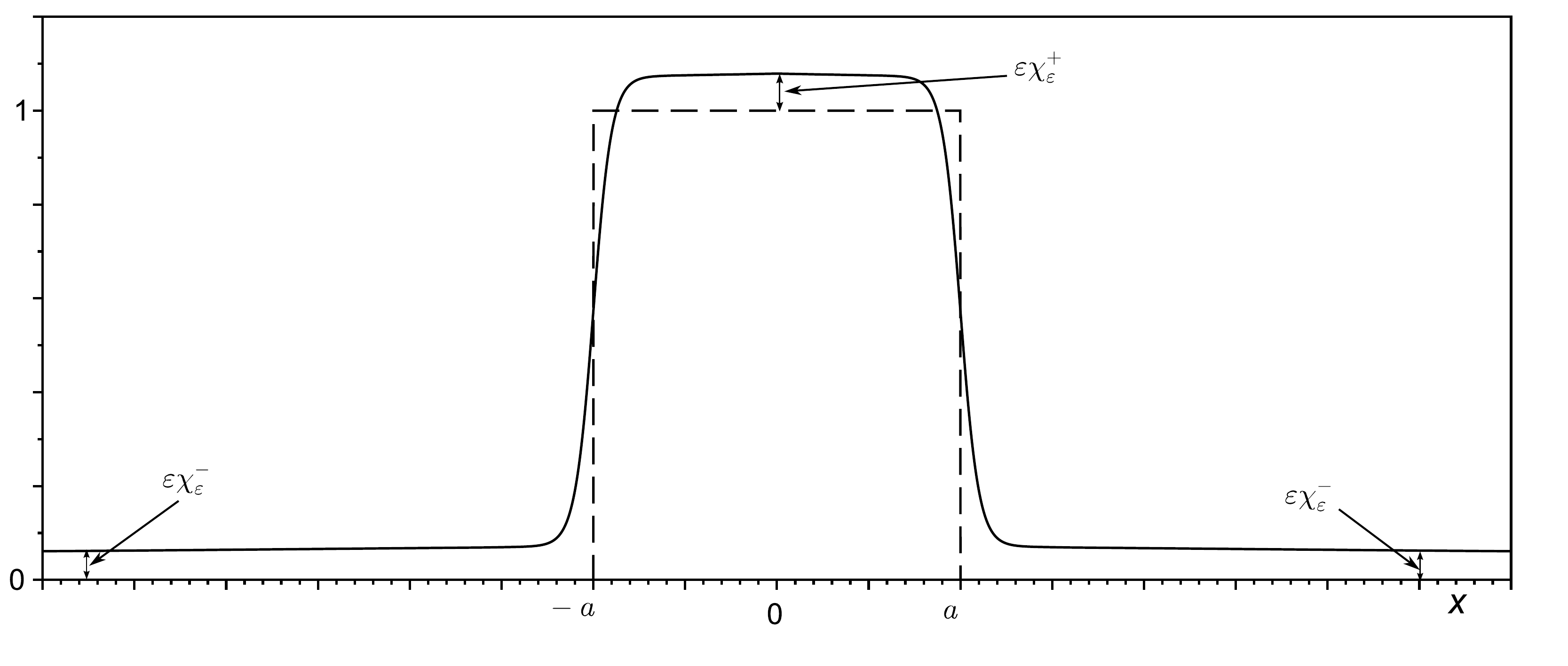}
		\caption{Illustration of the ansatz \eqref{repr}. Function $\rho_\ve$ decays to a non-zero constant of the order $\ve$  for $x\to\pm\infty$ and to a constant slightly different from 1 for $-a\leq x\leq a$ (solid). Dashed line represents the limiting profile, which is the characteristic function of $(-a,a)$.}
		\label{fig:sketch_chi}
	\end{center}
\end{figure}

Substitute representation \eqref{repr} in \eqref{tw_rho}-\eqref{tw_P} to find after rescaling the variable $y:=x/\ve$ and rearranging terms,
\begin{equation}
\begin{aligned}
\partial_y^2  u-W''(\phi_{\ve})u&=-V\partial_y \phi_{\ve}
+P_{\ve}(\partial_y\phi_{\ve}+\ve \partial_y u)
-\lambda
+\frac{1}{\ve}\Bigl(W^{\prime}(\phi_{\ve}+\ve\chi_\ve)-\partial^2_y(\phi_{\ve}+\ve \chi_\ve)\Bigr)\\
&+
\frac{1}{\ve}\Bigl(
W^{\prime}(\phi_{\ve}+\ve\chi_\ve+\ve u)
-W^{\prime}(\phi_{\ve}+\ve\chi_\ve)
-\ve W''(\phi_{\ve})u
\Bigr)
-\ve V\partial_y( u+\chi_\ve),
\end{aligned}
\label{eq_for_u_tw}
\end{equation}
\begin{equation}
\partial_y^2P_\ve+V\partial_y P_\ve -P_\ve=\beta \partial_y \phi_\ve
+\ve \beta\partial_y(\chi_\ve+u).
\label{eq_for_P_tw}
\end{equation}
Note that the ansatz \eqref{repr} yields the characteristic function 
of the interval $(-a,a)$ in the limit $\ve\to 0$, provided that $u=u_\ve$ remains bounded. 
In this sense we seek solutions with localized profiles of the phase field function $\rho_\ve$. The idea of the construction of traveling wave solutions is based on the observation that solvability of  the above equations   \eqref{eq_for_u_tw} and \eqref{eq_for_P_tw} can be handled by local analysis near the points $\pm a$. Indeed, setting  $z=y+a$  
\eqref{eq_for_u_tw}-\eqref{eq_for_P_tw} and keeping only leading order terms  
we (formally) obtain 
$$
\partial_z^2u-W^{\prime\prime}(\theta_0(z))u=-V\theta_0^\prime(z) +P_\ve\theta_0^\prime-\lambda\ \ \text{and}\ \ \partial_z^2P_\ve+V\partial_z P_\ve -P_\ve=\beta \theta_0^\prime(z).
$$
Resolve the second equation to obtain $P_\ve(z)=\psi(z,V)$, then solvability of the first equation (recall that $\partial_z^2\theta_0^\prime(z)-W^{\prime\prime}(\theta_0(z))\theta_0^\prime(z)=0$) requires that 
$c_0V=\Phi_\beta(V)-\lambda$, where we have used \eqref{def_for_motion1}. Similarly, local analysis near the point $a$ leads to  the equation $-c_0V=\Phi_\beta(-V)-\lambda$. Thus, we have reduced the infinite dimensional system \eqref{eq_for_u_tw}-\eqref{eq_for_P_tw} to a two dimensional one. 

In order to transform the above heuristics into a rigorous analysis we 
reset \eqref{eq_for_u_tw}-\eqref{eq_for_P_tw} as a fixed point problem.
To this end rewrite \eqref{eq_for_u_tw}
in the following  form, introducing  auxiliary functions ${\theta}_\ve^{(1)}(y)=\theta_0^\prime(y+a/\ve)+\theta_0^\prime(a/\ve-y)$
and $\theta_\ve^{(2)}(y)=\theta_0^\prime(y+a/\ve)-\theta_0^\prime(a/\ve-y)$,
\begin{equation}
\label{pereoboznachili}
\partial_y^2  u-W''(\phi_{\ve})u-
\frac{\partial^2_y\theta_\ve^{(1)}-W^{\prime\prime}(\phi_\ve)\theta_\ve^{(1)}}{\theta_\ve^{(1)}}u+H_\ve\int u\theta_\ve^{(2)} dy=G(\lambda,V,P_\ve,u),
\end{equation}
where
$$
H_\ve=\frac{1}{\displaystyle\int \left(\theta_\ve^{(2)}\right)^2 dy}\Biggl(W''(\phi_{\ve})\theta_\ve^{(2)}-\partial_y^2\theta_\ve^{(2)}  +
\frac{\partial^2_y\theta_\ve^{(1)}-W^{\prime\prime}(\phi_\ve)\theta_\ve^{(1)}}
{\theta_\ve^{(1)}}\theta_\ve^{(2)}
\Biggr)
$$
and
$$
\begin{aligned}
G(\lambda,V,P_\ve,u)&=H_\ve\int u\theta_\ve^{(2)} dy
-\frac{\partial^2_y\theta_\ve^{(1)}-W^{\prime\prime}(\phi_\ve)\theta_\ve^{(1)}}
{\theta_\ve^{(1)}}u
-V\partial_y \phi_{\ve}
+P_{\ve}(\partial_y\phi_{\ve}+\ve \partial_y u)-\ve V \partial_y (u+\chi_\ve)
-\lambda\\
&+\frac{1}{\ve}\Bigl(W^{\prime}(\phi_{\ve}+\ve\chi_\ve)-\partial^2_y(\phi_{\ve}+\ve \chi_\ve)\Bigr)
+
\frac{1}{\ve}\Bigl(
W^{\prime}(\phi_{\ve}+\ve\chi_\ve+\ve u)
-W^{\prime}(\phi_{\ve}+\ve\chi_\ve)
-\ve W''(\phi_{\ve})u
\Bigr).
\end{aligned}
$$
Note that the operator $\mathcal{Q}_\ve$ in the left hand side of \eqref{pereoboznachili},
\begin{equation*}
\mathcal{Q}_\ve u: = \partial_y^2  u-W''(\phi_{\ve})u-
\frac{\partial^2_y\theta_\ve^{(1)}-W^{\prime\prime}(\phi_\ve)\theta_\ve^{(1)}}{\theta_\ve^{(1)}}u+H_\ve\int u\theta_\ve^{(2)} dy
\end{equation*}
has
two eigenfunctions $\theta_\ve^{(1)}$ and $\theta_\ve^{(2)}$
corresponding to the zero eigenvalue.

	\begin{lemma}\label{lemma:bound_on_q}
		Let $v_\ve$ be orthogonal to both $\theta^{(1)}_{\ve}$ and $\theta^{(2)}_{\ve}$ in $L^2(\mathbb R)$. Assume also that  $f_\ve:=\mathcal{Q}_{\ve} v_\ve$ belongs to $L^2(\mathbb R)$. Then 
		\begin{equation}\label{lemma_ineq_for_tw}
			\|v_\ve\|_{H^1}\leq C \|f_\ve\|_{L^2}+Ce^{-r/\ve},
		\end{equation}
		where constants $C$ and $r>0$ are independent from $\ve$.	
	\end{lemma}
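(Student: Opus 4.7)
The plan is to exploit the structure of $\mathcal{Q}_\ve$ as a rank-one non-local perturbation of a self-adjoint Schr\"odinger-type operator, which in turn is an exponentially small perturbation of the linearized two-transition Allen-Cahn operator. The hypothesis $v_\ve\perp\theta_\ve^{(2)}$ kills the non-local term, reducing the question to standard coercivity on the complement of an approximate two-dimensional kernel.

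First, I would split $\mathcal{Q}_\ve = \hat{\mathcal{Q}}_\ve + \mathcal{N}_\ve$, where $\hat{\mathcal{Q}}_\ve u := \partial_y^2 u - W''(\phi_\ve)u - V_\ve u$ with $V_\ve(y):=(\partial_y^2\theta_\ve^{(1)}-W''(\phi_\ve)\theta_\ve^{(1)})/\theta_\ve^{(1)}$, and $\mathcal{N}_\ve u := H_\ve\int u\,\theta_\ve^{(2)}\,dy$. Since $v_\ve\perp\theta_\ve^{(2)}$, we have $\mathcal{N}_\ve v_\ve\equiv 0$ and the equation reduces to $\hat{\mathcal{Q}}_\ve v_\ve = f_\ve$, with a formally self-adjoint left-hand side. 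Next I would quantify the smallness of the potential $V_\ve$. Writing $z_1=y+a/\ve$, $z_2=a/\ve-y$ and using the standing-wave identity $\theta_0'''(z)=W''(\theta_0(z))\theta_0'(z)$, direct computation gives
\begin{equation*}
\partial_y^2\theta_\ve^{(1)}-W''(\phi_\ve)\theta_\ve^{(1)} = \sum_{i=1,2}\bigl[W''(\theta_0(z_i))-W''(\phi_\ve)\bigr]\theta_0'(z_i).
\end{equation*}
Since $\phi_\ve=\theta_0(z_1)\theta_0(z_2)$ and $|\theta_0(z_j)-1|=O(e^{-r/\ve})$ on the effective support of $\theta_0'(z_i)$ for $j\neq i$, while the sum $\theta_0'(z_1)+\theta_0'(z_2)=\theta_\ve^{(1)}$ dominates the numerator pointwise, I obtain $\|V_\ve\|_{L^\infty}\le Ce^{-r/\ve}$.

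Third, I would invoke the classical spectral theory for the linearized two-transition Allen-Cahn operator $L_\ve := -\partial_y^2+W''(\phi_\ve)$ on $\mathbb{R}$: it possesses exactly two exponentially small eigenvalues $\mu_\ve^{(1)},\mu_\ve^{(2)}=O(e^{-r/\ve})$ with orthonormal eigenfunctions $\phi_\ve^{(1)},\phi_\ve^{(2)}$ lying within $O(e^{-r/\ve})$ in $L^2$ of the normalized $\theta_\ve^{(1)},\theta_\ve^{(2)}$, and the remaining spectrum satisfies the uniform gap $\langle L_\ve w,w\rangle\ge\lambda_0\|w\|_{L^2}^2$ whenever $w\perp\phi_\ve^{(1)},\phi_\ve^{(2)}$. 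Decomposing $v_\ve=a_1\phi_\ve^{(1)}+a_2\phi_\ve^{(2)}+w_\ve$ in this basis and using the orthogonality of $v_\ve$ to both $\theta_\ve^{(i)}$ together with the $O(e^{-r/\ve})$ closeness of $\phi_\ve^{(i)}$ and $\theta_\ve^{(i)}$, I get $|a_1|,|a_2|\le Ce^{-r/\ve}\|v_\ve\|_{L^2}$.

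Finally, testing $\hat{\mathcal{Q}}_\ve v_\ve=f_\ve$ against $-v_\ve$ and combining the spectral gap on $w_\ve$ with the uniform bound on $V_\ve$ gives $\lambda_0\|v_\ve\|_{L^2}^2\le\|v_\ve\|_{L^2}\|f_\ve\|_{L^2}+Ce^{-r/\ve}\|v_\ve\|_{L^2}^2$, so $\|v_\ve\|_{L^2}\le C\|f_\ve\|_{L^2}$ for $\ve$ small; reinserting into the equation controls $\|v_\ve'\|_{L^2}$ and yields the $H^1$ bound. Residual terms arising from the mismatch between the explicit kernel $\theta_\ve^{(1,2)}$ and the exact small-eigenvalue eigenfunctions of $L_\ve$ are absorbed in the additive $Ce^{-r/\ve}$ in \eqref{lemma_ineq_for_tw}. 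The main obstacle is step three, namely justifying the \emph{uniform} spectral gap $\lambda_0>0$ for the double-well operator $L_\ve$ and the exponential closeness of its true small-eigenvalue eigenfunctions to the explicit ansatz functions $\theta_\ve^{(1)},\theta_\ve^{(2)}$; this requires careful exponential-weight bookkeeping to separate the localized behavior at each transition from the cross-interaction between wells.
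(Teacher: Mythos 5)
Your strategy---use $v_\ve\perp\theta_\ve^{(2)}$ to kill the non-local term and then prove coercivity of the remaining local operator on the complement of an approximate kernel---is coherent, but it departs from the paper at the decisive step and contains a genuine gap. The pointwise bound $\|V_\ve\|_{L^\infty}\le Ce^{-r/\ve}$, on which your final absorption argument rests, is false in general. Your identity for the numerator is correct, but the quotient must also be controlled in the far field, where numerator and denominator are \emph{both} exponentially small. Take $y\to+\infty$, so $z_1=y+a/\ve\to+\infty$ and $z_2=a/\ve-y\to-\infty$: the bracket $W''(\theta_0(z_1))-W''(\phi_\ve)$ tends to $W''(1)-W''(0)$, while $\theta_0'(z_1)\sim e^{-\sqrt{W''(1)}\,z_1}$ and $\theta_0'(z_2)\sim e^{-\sqrt{W''(0)}\,|z_2|}$. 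If $W''(0)>W''(1)$, then beyond a crossover point at distance of order $a/\ve$ the term $\theta_0'(z_1)$ dominates $\theta_\ve^{(1)}$, and $V_\ve(y)\to W''(1)-W''(0)\neq 0$. Since the hypotheses \eqref{condpoten} (and the entire traveling-wave section) allow asymmetric potentials with $W''(0)\neq W''(1)$, this is not a removable technicality: the inequality $\lambda_0\|v_\ve\|^2_{L^2}\le\|v_\ve\|_{L^2}\|f_\ve\|_{L^2}+Ce^{-r/\ve}\|v_\ve\|^2_{L^2}$ does not follow as you state it. (It is repairable, because $W''(\phi_\ve)+V_\ve=\partial_y^2\theta_\ve^{(1)}/\theta_\ve^{(1)}$ remains bounded below by a positive constant in the far field, but that forces a decomposition of the line rather than a global smallness argument.) A second, acknowledged gap is your step three: the Carr--Pego/Alikakos--Fusco type facts about $L_\ve$ (two $O(e^{-r/\ve})$ eigenvalues, eigenfunctions exponentially close to $\theta_\ve^{(1)},\theta_\ve^{(2)}$, uniform gap $\lambda_0$) constitute a substantial piece of analysis that you invoke but do not supply, and which moreover would have to survive the addition of the non-small potential $V_\ve$.

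The paper's proof avoids both difficulties simultaneously and is much shorter. Because $\theta_\ve^{(1)}>0$ is by construction an \emph{exact} zero mode of the local part of $\mathcal{Q}_\ve$, the ground-state substitution $v_\ve=\theta_\ve^{(1)}w_\ve$ yields the exact identity
\begin{equation*}
(\mathcal{Q}_\ve v_\ve,v_\ve)_{L^2}=-\int\bigl(\theta_\ve^{(1)}\bigr)^2(\partial_yw_\ve)^2\,dy,
\end{equation*}
leaving no residual potential to perturb away; the lower bound $\int v_\ve^2\,dy\le C\int(\theta_\ve^{(1)})^2(\partial_yw_\ve)^2\,dy+Ce^{-r/\ve}$ then follows from the weighted Poincar\'e inequality of \ref{appendix_poincare} applied near each transition point, combined with the exponentially small overlap $\int\theta_0'(y+a/\ve)\,\theta_0'(a/\ve-y)\,w_\ve\,dy$ coming from the orthogonality hypotheses. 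I recommend adopting that route, or else repairing both steps above explicitly.
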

	\begin{proof}
		Multiplying $\mathcal{Q}_\ve
		v_\ve$ by $v_\ve$ in $L^2(\mathbb R)$ and representing $v_\ve$ as
		$v_\ve=\theta_\ve^{(1)} w_\ve$ (note that $\theta_\ve^{(1)}>0$)
		we derive
		\begin{equation}
			\label{nettretegosostvznach}
			(\mathcal{Q}_\ve v_\ve, v_\ve)_{L^2}=
			\int\left(2\partial_y\theta_\ve^{(1)} \partial_y w_\ve+\theta_\ve^{(1)}\partial^2_y w_\ve\right)
			\theta_\ve^{(1)} w_\ve dy
			=-\int\left(\theta_\ve^{(1)}\right)^2
			(\partial_y w_\ve)^2 dy,
		\end{equation}   
		where the latter equality is obtained via integrating by parts, and the term with $H_\ve$ vanishes  
		thanks to orthogonality of $v_\ve$ to $\theta^{(2)}_\ve$. 
		Substituting the definition of $f_\ve$ into \eqref{nettretegosostvznach}, we obtain
		\begin{equation}\label{fv_bound}
			\int\left(\theta_\ve^{(1)}\right)^2
			(\partial_y w_\ve)^2 dy\leq \|f\|_{L^2} \|v_\ve\|_{L^2}.
		\end{equation}
		The statement of Lemma \ref{lemma:bound_on_q} immediately follows if we prove the following inequality
		\begin{equation}
			\label{poincare_prime}
			\int v_\ve^2 dy \leq C\int (\theta^{(1)}_{\ve})^2 (\partial_y w)^2 dy + Ce^{-r/\ve}
		\end{equation}
		with $C,r>0$ independent from $\ve$. Indeed, using the estimate \eqref{poincare_prime} and the Cauchy  inequality in the right hand side of \eqref{fv_bound} we get 
		\begin{equation}\label{what_we_need_in_lemma}
			\int \left(\theta_\ve^{(1)}\right)^2
			(\partial_y w_\ve)^2 dy \leq C\|f\|^2 + Ce^{-r/\ve},
		\end{equation} 
		which together with 
		\begin{equation*}
			\|v_\ve\|_{H^1}^2\leq \int \left(\theta_\ve^{(1)}\right)^2
			(\partial_y w_\ve)^2 dy+ C\int v_\ve^2 dy
		\end{equation*}
		implies \eqref{lemma_ineq_for_tw}.
		
		To prove \eqref{poincare_prime} we  use the Poincar\'e inequality (see \ref{appendix_poincare})
		\begin{equation}
			\int 
			\left(\theta^\prime_0(a/\ve\pm y)\right)^2
			\left| w_\ve - \langle w_\ve\rangle_{\pm} \right|^2 dy
			\leq
			C\int\left(\theta^{(1)}_\ve\right)^2 (\partial_y w_\ve)^2dy
			\label{ner-vo_tipa_poinc}
		\end{equation}
		with a constant $C$ independent of $\ve$ and  
		\begin{equation*}
			\langle w_\ve \rangle_{\pm}:=\frac{\int \left(\theta^\prime_0(a/\ve\pm y)\right)^2w_\ve d y}{\int \left(\theta^\prime_0\right)^2 dy}.
		\end{equation*}
		Due to orthogonality of $\theta_\ve^{(1)} w_\ve$ to $\theta_\ve^{(1)}$ and $\theta_\ve^{(1)}$, 
		we have
		$$
		\int \left(\theta^\prime_0(a/\ve\pm y)\right)^2w_\ve d y=- \int \theta^\prime_0(y+a/\ve)
		\theta^\prime_0(a/\ve-y) w_\ve d y.
		$$
		Thanks to  the exponential decay of $\theta_0'$, $\theta_0'(y)\leq \alpha_0 e^{-\kappa |y|}$   (see, e.g., \cite{MotSha95}), it follows that 
		\begin{equation}
			\Bigl|
			\int \left(\theta^\prime_0(a/\ve\pm y)\right)^2 w_\ve d y
			\Bigr|
			\leq e^{-r/\ve} \Bigl(\int \left(\theta^{(1)}_\ve\right)^2 w_\ve^2dy\Bigr)^{1/2}
			\label{almost_orthogonal}
		\end{equation}
		for some $r>0$ independent of $\ve$. Combining \eqref{almost_orthogonal} and \eqref{ner-vo_tipa_poinc} we obtain \eqref{poincare_prime}, the lemma is proved. \qed
	\end{proof}
	\begin{proposition} 
		\label{propSpectralapriori}	
		For sufficiently small $\ve$ the operator  $\mathcal{Q}_\ve^\ast$
		adjoint to $\mathcal{Q}_\ve$ (with respect to the scalar product
		in $L^2(\mathbb{R})$) has two eigenfunctions
		$\theta_\ve^{(1)}$ and
		$\theta_\ve^{(3)}=\theta_\ve^{(2)}+q_\ve$
		corresponding to the zero eigenvalue, with
		$\|q_\ve\|_{H^1}=o(\ve)$. Moreover the equation $\mathcal{Q}_\ve
		u=f$ has a solution if and only if $f\in L^2(\mathbb{R})$ is
		orthogonal to the eigenfunctions $\theta_\ve^{(1)}$  and
		$\theta_\ve^{(3)}$  of $\mathcal{Q}_\ve^\ast$.
	\end{proposition}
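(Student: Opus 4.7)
The plan is to compute $\mathcal{Q}_\ve^\ast$ explicitly, verify by direct calculation that $\theta_\ve^{(1)}$ is an exact null vector, and then construct the second null vector as an exponentially small perturbation of $\theta_\ve^{(2)}$ via a Fredholm-alternative argument supported by an adjoint version of Lemma \ref{lemma:bound_on_q}. Splitting $\mathcal{Q}_\ve = L_\ve + K_\ve$ into its self-adjoint part $L_\ve u = \partial_y^2 u - W''(\phi_\ve) u - W_\ve u$, with $W_\ve := (\partial_y^2 \theta_\ve^{(1)} - W''(\phi_\ve)\theta_\ve^{(1)})/\theta_\ve^{(1)}$, and its rank-one part $K_\ve u = H_\ve \int u\,\theta_\ve^{(2)}\,dy$, one obtains
\begin{equation*}
\mathcal{Q}_\ve^\ast v = L_\ve v + \theta_\ve^{(2)}\int H_\ve v\,dy.
\end{equation*}
By the very construction of $W_\ve$ one has $L_\ve\theta_\ve^{(1)}=0$, and two integrations by parts give $\int(\theta_\ve^{(2)})^2\,dy\cdot\int H_\ve\theta_\ve^{(1)}\,dy = -\int\theta_\ve^{(2)} L_\ve\theta_\ve^{(1)}\,dy = 0$, so $\mathcal{Q}_\ve^\ast\theta_\ve^{(1)}=0$ exactly.

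Next I would seek $\theta_\ve^{(3)} = \theta_\ve^{(2)} + q_\ve$ with $q_\ve\perp\theta_\ve^{(1)},\theta_\ve^{(2)}$ solving $\mathcal{Q}_\ve^\ast q_\ve = -\mathcal{Q}_\ve^\ast\theta_\ve^{(2)}$. The key observation is that the right-hand side is exponentially small in $L^2$. Using $\partial_z^2\theta_0' = W''(\theta_0)\theta_0'$, one writes
\begin{equation*}
\partial_y^2\theta_\ve^{(2)} - W''(\phi_\ve)\theta_\ve^{(2)} = [W''(\theta_0(y+a/\ve))-W''(\phi_\ve)]\theta_0'(y+a/\ve) - [W''(\theta_0(a/\ve-y))-W''(\phi_\ve)]\theta_0'(a/\ve-y),
\end{equation*}
and each bracket is exponentially small on the support of its accompanying $\theta_0'$-bump, since $\phi_\ve$ agrees there with $\theta_0(y+a/\ve)$ (respectively $\theta_0(a/\ve-y)$) up to an exponentially small correction. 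The analogous identity for $\theta_\ve^{(1)}$ combined with the bound $|\theta_\ve^{(2)}/\theta_\ve^{(1)}|\le 1$ (which uses $\theta_0'>0$) gives $\|W_\ve\theta_\ve^{(2)}\|_{L^2}\le C e^{-r/\ve}$, and hence $\|L_\ve\theta_\ve^{(2)}\|_{L^2}=O(e^{-r/\ve})$. From the defining formula for $H_\ve$ one then obtains $\int H_\ve\theta_\ve^{(2)}\,dy = -\|\theta_\ve^{(2)}\|_{L^2}^{-2}\int\theta_\ve^{(2)} L_\ve\theta_\ve^{(2)}\,dy = O(e^{-r/\ve})$, so $\|\mathcal{Q}_\ve^\ast\theta_\ve^{(2)}\|_{L^2}\le C e^{-r/\ve}$.

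To solve $\mathcal{Q}_\ve^\ast q_\ve = -\mathcal{Q}_\ve^\ast\theta_\ve^{(2)}$, the compatibility conditions $\langle\mathcal{Q}_\ve^\ast\theta_\ve^{(2)},\theta_\ve^{(j)}\rangle = \langle\theta_\ve^{(2)},\mathcal{Q}_\ve\theta_\ve^{(j)}\rangle = 0$, $j=1,2$, hold automatically. Since $\phi_\ve, W_\ve\to 0$ at spatial infinity, the limit operator $\partial_y^2 - W''(0)$ is an isomorphism $H^2\to L^2$ (as $W''(0)>0$); hence $L_\ve$ is Fredholm of index zero, and the rank-one perturbation $K_\ve$ preserves this, making $\mathcal{Q}_\ve$ and $\mathcal{Q}_\ve^\ast$ Fredholm of index zero. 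An adjoint analogue of Lemma \ref{lemma:bound_on_q}, proved by repeating the factorization $v=\theta_\ve^{(1)} w$ and the Poincar\'e estimate \eqref{ner-vo_tipa_poinc}, now with orthogonality arranged so that the $\theta_\ve^{(2)}\!\int H_\ve v\,dy$ contribution drops out, produces the desired $q_\ve$ with $\|q_\ve\|_{H^1}\le C\|\mathcal{Q}_\ve^\ast\theta_\ve^{(2)}\|_{L^2} + Ce^{-r/\ve} = O(e^{-r/\ve})=o(\ve)$. Linear independence of $\theta_\ve^{(1)}$ and $\theta_\ve^{(3)}=\theta_\ve^{(2)}+q_\ve$ follows from $\int\theta_\ve^{(1)}\theta_\ve^{(2)}\,dy=0$ (the integrand is odd under $y\to -y$) and the smallness of $q_\ve$; then index-zero Fredholm forces $\{\theta_\ve^{(1)},\theta_\ve^{(3)}\}$ to span $\ker\mathcal{Q}_\ve^\ast$, and the solvability criterion reduces to the standard Fredholm alternative. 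The main obstacle I anticipate is establishing the adjoint analogue of Lemma \ref{lemma:bound_on_q}: the factorization $v=\theta_\ve^{(1)} w$ was tailored to $\mathcal{Q}_\ve$, so one must carefully choose orthogonality constraints on the test function so that it is the $\theta_\ve^{(2)}\!\int H_\ve v\,dy$ piece (rather than the $H_\ve\!\int v\,\theta_\ve^{(2)}\,dy$ piece) that is neutralized in the Poincar\'e estimate.
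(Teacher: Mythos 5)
Your proposal is correct in substance and follows the same architecture as the paper's proof: Fredholm theory for $\mathcal{Q}_\ve$ viewed as an invertible constant--coefficient operator plus a relatively compact local perturbation plus a rank--one nonlocal term; exact membership of $\theta_\ve^{(1)}$, $\theta_\ve^{(2)}$ in the respective kernels; and an estimate of the corrector $q_\ve$ obtained by combining exponential smallness of a residual with the coercivity of Lemma \ref{lemma:bound_on_q} on the orthogonal complement. The one place you hedge --- the ``adjoint analogue'' of Lemma \ref{lemma:bound_on_q} --- is not actually an obstacle: for $v\perp\theta_\ve^{(2)}$ the pairing of the nonlocal part of $\mathcal{Q}_\ve^\ast$ with $v$ equals $\bigl(\int H_\ve v\,dy\bigr)\bigl(\int\theta_\ve^{(2)}v\,dy\bigr)=0$, so $(\mathcal{Q}_\ve^\ast v,v)_{L^2}=(L_\ve v,v)_{L^2}$ and the proof of Lemma \ref{lemma:bound_on_q} goes through verbatim. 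The paper sidesteps even this by a small algebraic trick: since $q_\ve\perp\theta_\ve^{(2)}$ one has $\mathcal{Q}_\ve q_\ve=L_\ve q_\ve$, and the adjoint eigenvalue equation $\mathcal{Q}_\ve^\ast(\theta_\ve^{(2)}+q_\ve)=0$ is rewritten as an equation for $\mathcal{Q}_\ve q_\ve$ with exponentially small right--hand side, so Lemma \ref{lemma:bound_on_q} applies exactly as stated.

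Two points need repair. (a) Your solvability check for $\mathcal{Q}_\ve^\ast q_\ve=-\mathcal{Q}_\ve^\ast\theta_\ve^{(2)}$ verifies orthogonality of the right--hand side only to $\theta_\ve^{(1)}$ and $\theta_\ve^{(2)}$, and your final appeal to index zero assumes $\dim\ker\mathcal{Q}_\ve^\ast=2$; both require knowing that $\ker\mathcal{Q}_\ve$ is \emph{exactly} the span of $\theta_\ve^{(1)},\theta_\ve^{(2)}$, which you never establish. The paper does this by observing that a third normalized null vector orthogonal to the first two would contradict \eqref{fv_bound}; the same coercivity you already invoke closes this gap in one line, but it must be said. (b) The claim that $W_\ve\to 0$ at spatial infinity can fail for asymmetric potentials with $W''(1)\neq W''(0)$: as $y\to+\infty$ the ratio $\theta_0'(y+a/\ve)/\theta_\ve^{(1)}$ may tend to $1$, in which case $W_\ve\to W''(1)-W''(0)\neq 0$. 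This does not destroy the Fredholm property (the limiting operators $\partial_y^2-W''(0)$ and $\partial_y^2-W''(1)$ at the two infinities are both invertible since $W''(0),W''(1)>0$), but the ``compact perturbation of $\partial_y^2-W''(0)$'' step needs this adjustment. With these repairs your argument is complete and in fact yields the stronger bound $\|q_\ve\|_{H^1}=O(e^{-r/\ve})$.
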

	
	\begin{proof}
		Given $f\in L^2(\mathbb{R})$, consider the equation $\mathcal{Q}_\ve u=f$ rewriting it 
		in the form
		\begin{equation}
			\label{vspomogeq}
			\partial_y^2u-W^{\prime\prime}(0)u
			+\left(W^{\prime\prime}(0)-W^{\prime\prime}(\phi_{\ve})\right)u-
			\frac{\partial^2_y\theta_\ve^{(1)}-
				W^{\prime\prime}(\phi_\ve)\theta_\ve^{(1)}}{\theta_\ve^{(1)}}u+H_\ve\int
			u\theta_\ve^{(2)} dy=f.
		\end{equation}
		Since $W^{\prime\prime}(0)>0$, the equation 
		$\partial_y^2u-W^{\prime\prime}(0)u=\tilde f$ has the unique solution 
		$u=G\tilde f$ for every $\tilde f\in L^2(\mathbb{R})$ with a bounded
		resolving operator $G:L^2(\mathbb{R}) \to L^2(\mathbb{R})$. Moreover, by applying the operator $G$ to
		\eqref{vspomogeq} we reduce this equation to $u+Ku
		=Gf$, where $K$ is a compact operator (this can be easily shown
		using the properties of the function $\theta_0$). Thus we can
		apply the Fredholm theorem to study the solvability of
		\eqref{vspomogeq}. Note that $\mathcal{Q}_\ve$ does not have other
		eigenfunctions corresponding to the zero eigenvalue besides
		$\theta_\ve^{(1)}$ and ${\theta}_\ve^{(2)}$.
		Indeed, existence of such an eigenfunction $v_\ve$ orthogonal to
		$\theta_\ve^{(1)}$, ${\theta}_\ve^{(2)}$ in
		$L^2(\mathbb{R})$ and normalized by $\int v^2_\ve dy=1$ would contradict \eqref{fv_bound} derived in the proof of Lemma~\ref{lemma:bound_on_q}. 
		
		
		Consider now the eigenfunction $\theta^{(3)}_\ve $ of $\mathcal{Q}_\ve^\ast$ orthogonal to $\theta^{(1)}_\ve$, and represent it as $\theta^{(3)}_\ve =\theta^{(2)}_\ve +q_\ve$ 
		with $q_\ve$ orthogonal to both $\theta^{(1)}_\ve$ and $\theta^{(2)}_\ve$. 
		Then combining the equality 
		\begin{equation*}
			\mathcal{Q}_\ve q_\ve = H_\ve \int  (\theta^{(2)})^2 dy -\theta^{(2)} \int H_\ve (\theta^{(2)}_\ve - q_\ve) dy 
		\end{equation*} 
		with Lemma \ref{lemma:bound_on_q} we obtain that 
		$\|q_\ve\|_{H^1}=o(\ve)$ as $\ve\to 0$.
		\qed
	\end{proof}
	
	\bigskip
	
	Let us consider now for a given $u\in H^1(\mathbb{R})$, $V$ and $\lambda$ a solution $P_\ve$ of \eqref{eq_for_u_tw},  assuming that $\ve$ is sufficiently small
	and $\|u\|_{H^1}\leq M$, $|\lambda|\leq M$, $|V|\leq M$ for some finite $M$. 
	We represent $P_\ve$ in the form 
	\begin{equation}
		P_\ve(y)=\psi_0(y+a/\ve,V)-\psi_0(a/\ve-y,-V)+B_\ve
		\label{P_repreent}
	\end{equation}
	and observe that $B_\ve$ can be estimated as follows,
	$$
	\int(\partial_y B_\ve)^2dy+\int(B_\ve)^2dy\leq 
	\ve CM\|B_\ve\|_{L^2}\ \ \text{hence} \ \ \|B_\ve\|_{H^1}\leq \ve C_1 M.
	$$ 
	Now consider $u$ in the left hand side of \eqref{pereoboznachili} as an unknown 
	function to write down the solvability condition 
	\begin{equation}
		\label{SOLVABILITY}
		\int G(\lambda,V,P_\ve,u)\theta_{\ve}^{(k)}dy=0,\ k=1,3.
	\end{equation}
	Calculate leading terms of \eqref{SOLVABILITY} for small $\ve$ taking into account the fact that 
	\begin{equation}
		\label{Bnd43}
		W^{\prime}(\phi_{\ve}+\ve\chi_\ve+\ve u)
		-W^{\prime}(\phi_{\ve}+\ve\chi_\ve)
		-\ve W''(\phi_{\ve})u=O(\ve^2)
	\end{equation}
	and 
	\begin{equation}
		\label{Bnd44}
		W^{\prime}(\phi_{\ve}+\ve\chi_\ve)
		-\partial^2_y (\phi_\ve+\ve\chi_\ve)=
		\ve(W^{\prime\prime}(\phi_{\ve})\chi_\ve
		-\partial^2_y \chi_\ve)+ 
		O(\ve^2),
	\end{equation}
	where $O(\ve^2)$ in \eqref{Bnd43} and \eqref{Bnd44} stand for functions whose $L^\infty$-norm is bounded by $C\ve^2$. Note also that integrals 
	$$
	\int (W^{\prime\prime}(\phi_{\ve})\chi_\ve
	-\partial^2_y\chi_\ve)\theta_{\ve}^{(k)}dy=\int (W^{\prime\prime}(\phi_{\ve})\theta_{\ve}^{(k)}
	-\partial^2_y\theta_{\ve}^{(k)})\chi_\ve dy,\  k=1,3
	$$
	 tend to zero, when $\ve \to 0$. Thus  \eqref{SOLVABILITY}
	can be rewritten as
	\begin{equation}
		0=\Phi_\beta(V)+\Phi_\beta(-V)-2\lambda+\ve\tilde \Phi_1(V,\lambda,u) \quad \text{and}\quad 2c_0V=\Phi_\beta(V)-\Phi_\beta(-V)+\ve\tilde \Phi_2(V,\lambda,u), 
		\label{Reduced_system_tw}
	\end{equation}
	where functions $\tilde \Phi_1$, $\tilde \Phi_2$ and their first  partial derivatives in $V$ and $\lambda$ are uniformly 
	bounded by some constant depending on $M$ only.  Note 
	that if $V_0$ is a nondegenerate root of the equation $2c_0V=\Phi_\beta(V)-\Phi_\beta(-V)$ then
	for sufficiently small $\ve$, in a neighborhood of $V_0$ and $\lambda_0=\frac{1}{2}(\Phi_\beta(V_0)+\Phi_\beta(-V_0))$ there exists a unique pair $V_\ve(u)$ and $\lambda_\ve(u)$ solving \eqref{Reduced_system_tw} and 
	depending continuously on $u$. 
	
	\begin{theorem}
		\label{Traveltheorem}
		 (Existence of traveling waves)
		Assume that the equation $2c_0V=\Phi_\beta(V)-\Phi_\beta(-V)$ has a nondegenerate root $V_0$.
		Then for sufficiently small $\ve$ there exists a function $u_\ve$, with $\|u_\ve\|_{H^1}\leq C$ and 
		$C$ being independent of $\ve$, a function $P_\ve$ and constants $V=V_\ve$, $\lambda=\lambda_\ve$ such 
		that $\rho_\ve$ given by \eqref{repr} and $P_\ve$ are solutions of  \eqref{tw_rho}-\eqref{tw_P}. 
		Moreover,
		the velocity $V_\ve$ and the constant $\lambda_\ve$  converge to $V_0$ and $\lambda_0:=\frac{1}{2}(\Phi_\beta(V_0)+\Phi_\beta(-V_0))$ as $\ve\to 0$. 
	\end{theorem}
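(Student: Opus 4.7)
The approach is to reformulate the coupled system \eqref{eq_for_u_tw}--\eqref{eq_for_P_tw} as a fixed point problem for $u$ alone and then apply Schauder's theorem on a suitable closed convex bounded set in $H^1(\mathbb{R})$. Given a candidate $u$ with $\|u\|_{H^1}\le M$ (with $M$ to be chosen large but independent of $\ve$), the construction of the map $u\mapsto \tilde u$ proceeds in three steps: (i) use the reduced system \eqref{Reduced_system_tw} together with the nondegeneracy of $V_0$ to define $V=V_\ve(u)$ and $\lambda=\lambda_\ve(u)$ close to $V_0$ and $\lambda_0$; (ii) with $V$ fixed, determine $P_\ve$ from \eqref{eq_for_P_tw} via the representation \eqref{P_repreent}, noting that $B_\ve$ enters linearly and the elementary bound $\|B_\ve\|_{H^1}\le \ve C_1 M$ derived above guarantees $P_\ve$ is uniquely defined and depends continuously on $V$ and $u$; (iii) invoke Proposition \ref{propSpectralapriori} to solve $\mathcal{Q}_\ve \tilde u = G(\lambda,V,P_\ve,u)$, which is solvable precisely because the definitions of $V_\ve(u)$ and $\lambda_\ve(u)$ in step (i) were chosen to enforce the orthogonality of $G$ to $\theta_\ve^{(1)}$ and $\theta_\ve^{(3)}$.

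For step (i), the implicit function theorem applies uniformly in $\ve$ because the Jacobian of the limiting system $\bigl(\Phi_\beta(V)+\Phi_\beta(-V)-2\lambda,\,2c_0V-\Phi_\beta(V)+\Phi_\beta(-V)\bigr)$ at $(V_0,\lambda_0)$ is nonzero by the nondegeneracy assumption, and $\tilde\Phi_1,\tilde\Phi_2$ together with their first derivatives are uniformly bounded in terms of $M$. For step (iii), Lemma \ref{lemma:bound_on_q} yields $\|\tilde u\|_{H^1}\le C\|G\|_{L^2}+Ce^{-r/\ve}$; inspection of $G$ using the bounds \eqref{Bnd43}--\eqref{Bnd44}, the control on $P_\ve$ from step (ii), and the exponential decay of $\theta_0'$ shows $\|G\|_{L^2}\le C(M)$ with $C(M)$ independent of $\ve$. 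Choosing $M$ larger than $C(M)$ (achievable because $C(M)$ grows sublinearly in $M$ for small $\ve$) keeps the ball invariant.

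The compactness needed for Schauder is subtle because $H^1(\mathbb{R})$ does not embed compactly in itself. I would handle this by working in the subspace of $H^1(\mathbb{R})$ consisting of functions satisfying a uniform exponential decay estimate $|\tilde u(y)|\le C e^{-\mu|y|}$ away from $\pm a/\ve$; such decay is inherited by $\tilde u$ from the exponential decay of $W''(\phi_\ve)-W''(0)$ together with the right-hand side $G$, by standard ODE arguments applied to the linear operator $\mathcal{Q}_\ve$ far from the transition layers. The compact embedding of a decay-weighted $H^1$ ball into $H^1(\mathbb{R})$ then yields compactness of the iteration map. Continuity in $u$ follows from the continuous dependence in each of the three steps and the linearity of the Fredholm inversion. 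Schauder's theorem then produces a fixed point $u_\ve$, and by construction the corresponding $(V_\ve,\lambda_\ve,P_\ve,u_\ve)$ solves \eqref{tw_rho}--\eqref{tw_P}.

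The main obstacle I anticipate is obtaining the uniform-in-$\ve$ exponential decay for iterates needed to upgrade boundedness in $H^1$ to compactness: the operator $\mathcal{Q}_\ve$ is only positive on the orthogonal complement of its two-dimensional kernel, and the projection must be controlled simultaneously with the pointwise decay estimate. Once this is in place, the convergence $V_\ve\to V_0$ and $\lambda_\ve\to \lambda_0$ is essentially free from step (i): passing to the limit $\ve\to 0$ in \eqref{Reduced_system_tw}, using $\ve\tilde\Phi_j\to 0$, shows that any accumulation point of $(V_\ve,\lambda_\ve)$ satisfies the limiting system, and nondegeneracy of $V_0$ forces uniqueness of the limit in the chosen neighborhood.
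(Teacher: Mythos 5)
Your proposal follows essentially the same route as the paper's proof: reformulate the system as the fixed-point problem $u\mapsto \mathcal{Q}_\ve^{-1}G(\lambda_\ve(u),V_\ve(u),P_\ve,u)$, with $(V_\ve(u),\lambda_\ve(u))$ determined from the reduced system \eqref{Reduced_system_tw} so that the solvability (orthogonality) conditions of Proposition \ref{propSpectralapriori} hold, obtain the invariant $H^1$ ball from Lemma \ref{lemma:bound_on_q}, recover compactness by restricting to a class of functions with uniform exponential decay away from $\pm a/\ve$ (the paper's set $\mathcal{K}_{M,r}$, with the decay propagated by comparison/supersolution arguments exactly as you anticipate), and conclude via Schauder together with nondegeneracy of $V_0$ for the convergence of $(V_\ve,\lambda_\ve)$. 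The argument is correct and matches the paper's in all essential steps.
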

	
	\begin{proof}
		Consider the mapping $u\mapsto \mathcal{Q}_\ve^{-1}G(\lambda_\ve,V_\ve, P_\ve,u)$, where $\lambda_\ve=\lambda_\ve(u)$ and $V_\ve=V_\ve(u)$ solve \eqref{Reduced_system_tw}, and $P_\ve$
		is the solution of \eqref{eq_for_P_tw} with $V=V_\ve$ and $\lambda=\lambda_\ve$. 
		Since the operator $\mathcal{Q}_\ve$ has two eigenfunctions $\theta_\ve^{(1)}$ and $\theta_\ve^{(2)}$ corresponding to the zero eigenvalue, we can choose  $v:=\mathcal{Q}_\ve^{-1}G(\lambda_\ve,V_\ve, P_\ve,u)$ to be orthogonal to $\theta_\ve^{(1)}$ and $\theta_\ve^{(2)}$ in $L^2(\mathbb{R})$. Then, 
		using Lemma \ref{lemma:bound_on_q}
		one can show that for large enough $M$  and sufficiently small $\varepsilon$ it holds 
		that  if $\|u\|_{H^1}\leq M$ then $\|\mathcal{Q}_\ve^{-1}G(\lambda_\ve,V_\ve, P_\ve,u)\|_{H^1}\leq M$. 
		Also, the mapping $u\mapsto \mathcal{Q}_\ve^{-1}G(\lambda_\ve,V_\ve, P_\ve,u)$ is continuous in $H^1$. Thus, we can apply the Schauder fixed point theorem provided we establish the compactness of the mapping under consideration. To this end we consider a 	subset of functions $u$ which  decay exponentially with their first derivatives:
\begin{equation}
	\label{CLASS}
	\mathcal{K}_{M,r}:=\{u;\, \|u\|_{H^1}\leq M,\, |u|\leq Me^{-r(|y|-2a/\ve)},\, |\partial_y u|\leq  Me^{-r(|y|-2a/\ve)}\ \text{when}\ |y|\geq 2a/\ve  \}.
\end{equation}    
 We claim that for some $M>0$ and $r>0$ the solution $v$ of the equation $\mathcal{Q}_\ve v=G(\lambda_\ve,V_\ve, P_\ve,u)$ 	(orthogonal to $\theta_\ve^{(1)}$ and $\theta_\ve^{(2)}$) belongs to $\mathcal{K}_{M,r}$ for every $u\in \mathcal{K}_{M,r}$, when $\ve$ is sufficiently small.  Indeed, the required bound for the norm of $v$ in $H^1(\mathbb{R})$ is already established. It remains to prove
that $v$ and $\partial_y v $ decay exponentially  when $|y|\geq 2a/\ve$. To this end we observe first that 
\begin{equation}
\label{expdecayP}
|P_\ve| \leq C(1+\ve M) e^{-r_1(|y|-2a/\ve)} \ \text{for}\ |y|\geq 2a/\ve,
\end{equation}  
with $C>0$ independent of $M$ and $\ve$, and $r_1>0$ depending on $M$ only. The proof of \eqref{expdecayP} is carried out in two steps. First, we multiply \eqref{eq_for_P_tw} by $P_\ve$, integrate on $\mathbb{R}$   
and apply the Cauchy-Schwarz inequality. As a result we get $\|P_\ve\|_{L^\infty}\leq C\|P_\ve\|_{H^1}\leq C_1(1+\ve M)$. Second, observe that the function $\theta_0^\prime(y)$ decays exponentially when $y\to\pm\infty$. Therefore there exists $C_2 \geq  C_1(1+\ve M)$ and $r_1>0$
such that the functions $P_{\pm}(y):=\pm C_2 e^{-r_1(|y|-2 a/\ve)}$ satisfy 
$$
\mp \partial_y^2P_\ve \mp V\partial_y P_\ve \pm P_\ve\geq \beta \partial_y \phi_\ve
+\ve \beta\partial_y(\chi_\ve+u)\ \text{for} \ |y| \geq 2a/\ve.
$$
This yields pointswise bounds $-C_2 e^{-r_1(|y|-2a/\ve)}\leq P_\ve(y)\leq  C_2 e^{-r_1(|y|-2a/\ve)}$ for all $y\leq -2a/\ve$ and $y\geq 2a/\ve$.
%
Next using \eqref{expdecayP} in the equation $\mathcal{Q}_\ve v=G(\lambda,V,P_\ve, u )$ and arguing similarly one can establish that 
$|v|\leq C(1+\ve C_1(M))e^{-r_2(|y|-2a/\ve)}$ for $|y|\geq 2a/\ve$. Finally, taking an integral
from $-\infty$ to $y$ (or from $y$ to $+\infty$) of the equation $\mathcal{Q}_\ve v=G(\lambda,V,P_\ve, u )$ we get the required bound for $\partial_y v$ on
$(-\infty,-2a/\ve]$ (or $[2a/\ve,+\infty)$). 

Thus the image of the convex closed set $\mathcal{K}_{M,r}$ under the mapping $u\mapsto \mathcal{Q}_\ve^{-1}G(\lambda_\ve,V_\ve, P_\ve,u)$ is contained 
in $\mathcal{K}_{M,r}$. Also the restriction of this mapping to $\mathcal{K}_{M,r}$ is clearly compact. Thus there exists a fixed point of the mapping $u\mapsto \mathcal{Q}_\ve^{-1}G(\lambda_\ve,V_\ve, P_\ve,u)$ in $\mathcal{K}_{M,r}$. Since the principal part $0=\Phi_\beta(V)+\Phi_\beta(-V)-2\lambda$ and $2c_0V=\Phi_\beta(V)-\Phi_\beta(-V)$ of the system  \eqref{Reduced_system_tw}
is nondegenerate in the neighborhood of $V_0$ and $\lambda_0$, we have 
$V_\ve\to V_0$ and $\lambda_\ve\to \lambda_0$ as $\ve \to 0$.  	
\qed	
\end{proof}

\begin{remark}
Note that $V_0=0$ and $\lambda_0=\Phi_\beta(0)$ are always solutions of the principal part of the system \eqref{Reduced_system_tw}. Moreover one can establish a traveling wave (in fact standing wave) solution with $V_\ve$ 
equal to zero exactly, by following the line of Theorem \ref{Traveltheorem} 
but considering subspace of even functions $u\in H^1(\mathbb{R})$. 
The existence of nontrivial traveling waves (with nonzero velocities) is granted by
Theorem \ref{Traveltheorem} in the case when the equation $2c_0V=\Phi_\beta(V)-\Phi_\beta(-V)$ has a nonzero (nondegenerate) root. Such a solution does not exist for 
the standard potential $W(\rho)=\frac{1}{4}\rho^2(\rho-1)^2$, in this case $\Phi_\beta(V)=\Phi_\beta(-V)$ for all $V$ due to the fact that $\theta_0^\prime$ is an odd function. However, if the potential has two equally deep wells but possesses certain asymmetry, e. g. $W(\rho)=\frac{1}{4} (\rho^2+\rho^4)(\rho-1)^2$, we have $\Phi_\beta(V)>\Phi_\beta(-V)$ for $V>0$, so that nontrivial solutions of  $2c_0V=\Phi_\beta(V)-\Phi_\beta(-V)$ do exist for sufficiently large $\beta$, 
$\beta>\beta_{\rm critical}$. The plot of the function $\Phi_\beta(V)$
for $\beta=1$ and  $W(\rho)=\frac{1}{4}(\rho^2+\rho^4)(\rho-1)^2$ is depicted on Fig. \ref{AsymmetrictPotential}, as well as the corresponding standing wave.     
\end{remark}

\begin{figure}[]
	\centerline{
		\includegraphics[width=.45\textwidth]{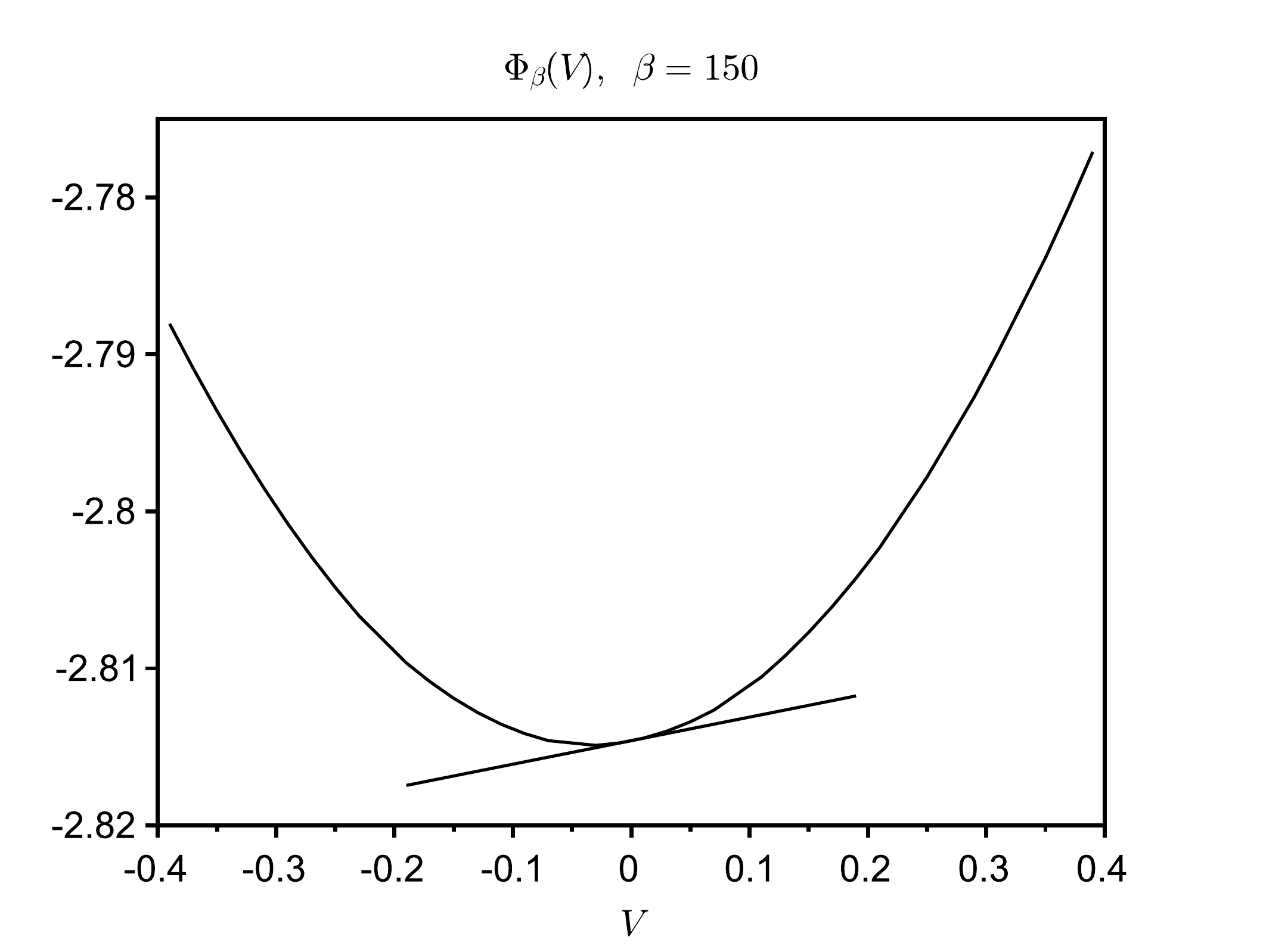}
		\includegraphics[width=.45\textwidth]{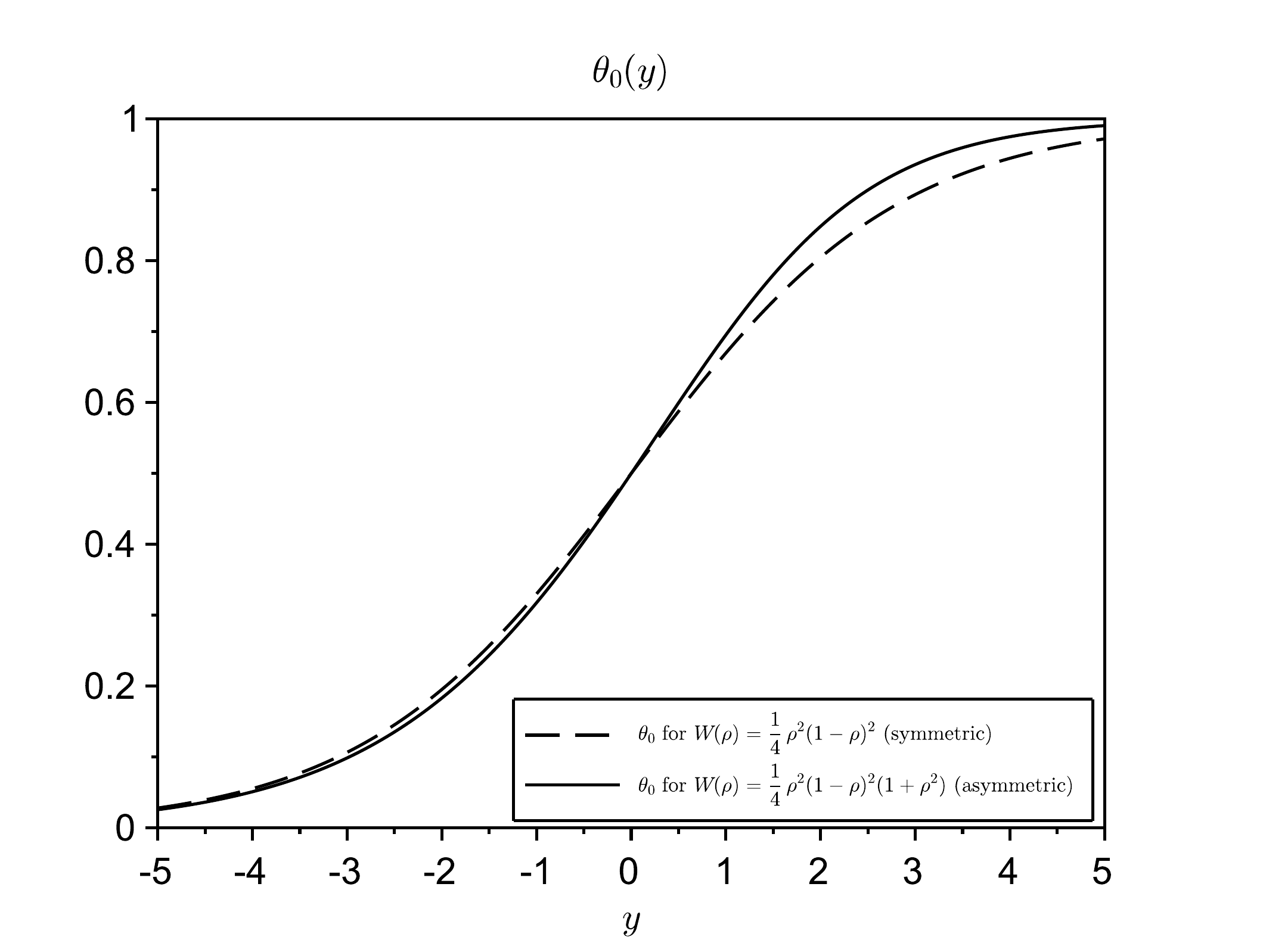}
		}
	\caption{{\it Left:} $\Phi_\beta(V)$ for $\beta=150$ and $W(\rho)=\frac{1}{4}\rho^2(1+\rho^2)(\rho-1)^2$, positive slope illustrates $\Phi'_\beta(0)>0$ ; {\it Right:} $\theta_0$,  standing wave  for the Allen-Cahn equation for $W=\frac{1}{4}\rho^2(\rho-1)^2$ (dashed) and $W(\rho)=\frac{1}{4}\rho^2(1+\rho^2)(\rho-1)^2$ (solid).} 
	\label{AsymmetrictPotential}
\end{figure}

\begin{remark}
	\label{KosoiPotential}
As already mentioned, nontrivial traveling waves appear in the case 
when $W(\rho)$ has certain asymmetry, that, in particular, makes the derivative $\Phi_\beta^\prime(V)$ of $\Phi_\beta(V)$ to be positive at $V=0$. The function $\Phi_\beta(V)$ depends on the potential $W(\rho)$ in a complex way. In order to have an idea 
about  this dependence assume that the diffusion coefficient in equation \eqref{eq2} for $P_\ve$ is given by $\delta\ve$, where 
$\delta$ is a positive parameter independent of $\ve$. This leads to redefining $\Phi_\beta(V)$ as follows,
$$
\Phi_\beta(V)=\int \chi(\theta_0^\prime)^2 dy, \quad  -\delta \partial_y^2 \psi-V\partial_y \psi+\psi=-\beta\theta_0^\prime.
$$
One can write down an asymptotic expansion of $\psi$ and its derivative $\psi_V$ with respect to $V$ at $V=0$ for sufficiently small $\delta>0$
$$
\psi=-\beta\theta_0^\prime-\delta\beta\theta_0^{\prime\prime\prime}
+\dots, \quad \psi_V=-\beta\theta_0^{\prime\prime}-2\delta\beta\theta_0^{(\rm{iv})}+\dots. 
$$
Then we have 
$$
\Phi^{\prime}_\beta(0)=-2\delta\beta\int \theta_0^{(\rm{iv})} (\theta_0^\prime)^2 dy+O(\beta\delta^2),
$$
which yields, after integrating by parts and using the relations $(\theta^\prime)^2=2W(\theta)$, $\theta^{\prime\prime}=W^\prime(\theta)$,
\begin{equation}\label{asymmetrmeasure}
\Phi^{\prime}_\beta(0)=\frac{8\sqrt{2}}{3}\delta\beta\int_0^1 W^{\prime\prime}(\rho)\,d W^{3/2}(\rho) +O(\beta\delta^2).
\end{equation}
The integral  in \eqref{asymmetrmeasure} can be interpreted as a measure of asymmetry of the potential $W(\rho)$, and nontrivial traveling waves emerge if this integral is positive and 
\begin{equation*}\beta>\beta_{\rm critical}=\frac{3 c_0}{8\sqrt{2}\delta\int_0^1 W^{\prime\prime}(\rho)\,d W^{3/2}(\rho) +O(\delta^2)}.
\end{equation*}
\end{remark}

\section {Sharp interface limit in 1D model problem}
\label{limit}

The equation of motion  \eqref{motion1} formally derived in Subsection \ref{formalderivation} exhibits qualitative changes for large values    
of the parameter $\beta$. This is indicated, in particular,  by the fact that the equation    
\begin{equation}\label{motion1_simple}
c_0V-\Phi_{\beta}(V)=-F,
\end{equation} 
may have multiple roots $V$. 
Note that combining the curvature and integral (constant) terms in \eqref{motion1} yields the equation of the form \eqref{motion1_simple} with 
$
F:=\frac{1}{|\Gamma|}\int_{\Gamma}\left(\kappa+\Phi_\beta(V)\right)ds-\kappa$.

In this Section we analyze a 1D analogue of the original model and rigorously derive a law of motion in the sharp interface limit. For given $F(t)\in C[0,T]$ we consider bounded solutions of the system   
\begin{empheq}[left=\empheqlbrace]{align}
\frac{\partial \rho_{\varepsilon}}{\partial t}&=\partial^2_{x}\rho_{\varepsilon}-\frac{W'(\rho_{\varepsilon})}{\varepsilon^2}-P_{\varepsilon}\partial_x\rho_{\varepsilon}+\frac{F(t)}{\varepsilon},\quad x\in\mathbb{R}^1,\;t>0,\label{P1}
\\
\frac{\partial P_{\varepsilon}}{\partial t}&=\varepsilon \partial_{x}^2P_{\varepsilon}-\frac{1}{\varepsilon}P_{\varepsilon}-\beta \partial_{x}\rho_{\varepsilon}.
\label{P2}
\end{empheq}


Analysis of the 1D problem \eqref{P1}-\eqref{P2} is a necessary step for understanding the original problem \eqref{eq1}-\eqref{eq2}.  
Observe that motion of the interface in the 2D system \eqref{eq1}-\eqref{eq2} occurs in the normal direction, and therefore it is essentially one-dimensional. Thus, the 1D model \eqref{P1}-\eqref{P2} is anticipated to capture the main features of 
\eqref{eq1}-\eqref{eq2}. 
The effects of curvature and mass conservation  in \eqref{motion1} 
are modeled by a given function $F(t)$.  
We believe that qualitative conclusions obtained for the 1D problem \eqref{P1}-\eqref{P2} apply for the 2D model \eqref{eq1}-\eqref{eq2}.

We study  the asymptotic behavior of solutions to the system \eqref{P1}-\eqref{P2} as $\ve\to 0$  with "well-prepared"  initial data 
for $\rho_\ve$,
\begin{equation}
\label{ic}
\rho_{\ve}(x,0)=\theta_0(x/\ve)+\ve v_{\ve}(x/\ve),
\end{equation}
where $\theta_0$
is a standing wave solution of the Allen-Cahn equation \eqref{theta_0z} 
such that $\theta_0(z)\to 0$ as $z\to-\infty$ and $\theta_0(z)\to 1$ 
as $z\to+\infty$. We seek $\rho_\ve$ in the form
\begin{equation}\label{form_rho_ve_1}
\rho_{\ve}(x,t)= \theta_0\left(\frac{x-x_{\ve}(t)}{\ve}\right)+\ve v_{\ve}\left(\frac{x-x_{\ve}(t)}{\ve},t\right).
\end{equation}
The 
$x_{\ve}(t)$ in \eqref{form_rho_ve_1} can be viewed as a location 
of the interface.
Remark \ref{remark:perturbed_interface} explains that a  choice of $x_\ve$ is not unique, however it is well defined in the limit $\ve\to 0$.


The main goal of this Section is to prove that $x_{\ve}(t)$ converges  as $\ve\to 0$ to $x_0(t)$, whose velocity $V_0(t)=\dot{x}_0(t)$ solves the sharp interface equation 
\begin{equation}\label{sharp_interface}
c_0V_0(t)=\Phi_{\beta}(V_0(t))-F(t), 
\end{equation}
where $\Phi(V)$ is the known nonlinear function given by \eqref{def_for_motion1}. This equation can be formally obtained in the limit $\ve \to 0$  as in the Section \ref{formalderivation}. 
%
%

Next for reader's convenience we summarize key steps of the asymptotic analysis of \eqref{P1}-\eqref{P2}:
\begin{itemize}	
	\item[(i)] {\it Choice of a special representation.} The 
	function $\rho_{\ve}$ is represented in the form 
	\begin{equation}
\label{repr_prev}
		\rho_{\ve}(x,t)=\theta_0(y)+\ve\chi_{\ve}(y,t)+\ve u_{\ve}(y,t),\ P_\ve(x,t)=Q_\ve(y,t), \;\;y=\frac{x-x_{\ve}(t)}{\ve},
	\end{equation}
	\noindent where $\theta_0$ and $\chi_\ve$ are known, and $u_{\ve}$, $Q_\ve$ are the new unknown functions. Existence of 
	$x_{\ve}(t)$ with estimates on $u_{\ve}$ uniform in $\ve$ and $t$ are established in Section \ref{section:reduction}. 
	\item[(ii)] {\it Reduction of the system to a single equation.}
	The unknown function $u_{\ve}$ is eliminated by showing that  the third term  in representation \eqref{repr_prev} is small. Next, we split $Q_{\ve}$ into two parts, $Q_{\ve}=A_{\ve}+B_{\ve}$, where $B_{\ve}$ depends on $u_{\ve}$ but is small, and $A_{\ve}$ does not depend on $u_\ve$. Thus, the original system \eqref{P1}-\eqref{P2}  is reduced to 
	\begin{empheq}[left=\empheqlbrace]{align}
	(c_0+o(1))V_\ve(t)&=\int (\theta_0')^2 A_{\ve}dy-F(t) +o(1),\label{eq_for_V_key_steps}\\
	\ve\frac{\partial A_{\ve}}{\partial t}&= \partial_y^2A_{\ve}+V_{\ve}(t)\partial_y A_{\ve}-A_{\ve}-\beta {\theta_0^\prime}.\label{eq_for_A_key_steps}
	\end{empheq}
	Taking the limit $\ve\to 0$ in the system \eqref{eq_for_V_key_steps}-\eqref{eq_for_A_key_steps} is non-trivial because of the product term $V_{\ve}(t)\partial_y A_{\ve}$.
	\item[(iii)] {\it Analysis of reduced problem.} For sufficiently small $\beta$ we prove that $x_{\ve}(t)\to x_0(t)$ as $\ve\to 0$  by the contraction mapping principle. For larger $\beta$, system  \eqref{eq_for_V_key_steps}-\eqref{eq_for_A_key_steps} further reduces to a singularly perturbed non-linear non-local equation. The limiting transition in this equation is based on the stability analysis of the semigroup generated by the linearized operator.    
\end{itemize}

\subsection{Asymptotic representation for $\rho_{\ve}$}
In order to pass to the limit $\ve \to 0$ in \eqref{P1}-\eqref{P2} we further specify $v_{\ve}$ in \eqref{form_rho_ve_1}. Namely,  we introduce the representation 
\begin{equation}\label{eq_form}
\rho_{\ve}(x,t)=\theta_0\left(\frac{x-x_{\ve}(t)}{\ve}\right)+\ve\chi_{\ve}\left(\frac{x-x_{\ve}(t)}{\ve},t\right)+\ve u_{\ve}\left(\frac{x-x_{\ve}(t)}{\ve},t\right),
\end{equation}
with the new unknown function $u_{\ve}$ satisfying 
\begin{equation}\label{ortogonality}
\int \theta_0'(y) u_{\ve}(y,t) dy=0,
\end{equation} and $\chi_{\ve}(y,t)$ defined by
\begin{equation}\nonumber
\chi_{\ve}(y,t)=\chi^-_{\ve}(t)+\theta_0(y)(\chi^+_{\ve}(t)-\chi^-_{\ve}(t)),
\end{equation}
where $\chi^+$ and $\chi^-$ are solutions of the following ODEs
\begin{eqnarray}\label{eq_for_psi_pm}
\ve^2
\partial_t \chi^+_{\ve}=-\frac{W'(1+\ve\chi_{\ve}^+)}{\ve}+F(t),\;\;\;
\ve^2\partial_t \chi_{\ve}^-=-\frac{W'(\ve\chi_{\ve}^-)}{\ve}+F(t)
\end{eqnarray}
with the initial data $\chi^+_{\ve}(0)=F(0)/W^{\prime\prime}(1)$ and 
$\chi^-_{\ve}(0)=F(0)/W^{\prime\prime}(0)$.

The 
idea
of the decomposition of the lower order term in \eqref{form_rho_ve_1} into two parts is 
suggested by the 
observation that it is the most important to control behavior of $\rho_\ve$ 
in the vicinity of the interface. So, 
ideally we would like to localize the analysis by considering functions that are negligibly small outside the interface. However, the right hand side $F(t)$ 
prevents $\rho_\ve$ from being localized. 
The function $\chi_\ve$ absorbs this nonlocal part of $\rho_\ve$: 
the new unknown function $u_\ve$ decays at infinity and, therefore, it allows one to work in Sobolev spaces on $\mathbb R$. 
Note that the standard ODE methods yield the following bounds
\begin{equation}\label{bounds_on_psi}
|\chi_{\ve}(y,t)|+|\partial_y\chi_{\ve}(y,t)|+|\partial_y^2\chi_{\ve}''(y,t)|\leq C \;\;\forall t\in[0,T],\;y\in{\mathbb R},
\end{equation}
moreover, thanks to the continuity of $F(t)$ and a particular choice of the initial values $\chi^{\pm}_\ve(0)$ we have
\begin{equation}
\label{partia_t_psi}
\ve^2\|\partial_t\chi_\ve\|_{L^\infty}\to 0\quad \text{uniformly} \ \text{on}\ [0,T]\ \text{as}\ \ve\to 0.
\end{equation}

Finally, we set $Q_\ve(y,t):=P_\ve(x_\ve+\ve y, t)$.

\begin{remark}
	\label{remark:perturbed_interface}
The choice of $x_\ve$ in the representation \eqref{form_rho_ve_1} is not unique, 
e.g. its perturbation with a term of order $\ve^2$ still leads to an 
expansion of the form  \eqref{form_rho_ve_1}. We introduced the additional orthogonality condition \eqref{ortogonality} which implicitly  
specifies $x_\ve(t)$. 
This  condition 
allows us to use 
Poincar\'{e} type inequalities (see \ref{appendix_poincare}) when deriving 
various bounds for $u_\ve$. If the initial value of $u_\ve$ in the expansion \eqref{eq_form}
does not satisfy \eqref{ortogonality}, it can be fixed by perturbing the initial value $x_\ve(0)=0$
with a higher order term. Indeed, this amounts to 
solving the equation
$$
\int \left(\theta_0(y+x_\ve(0)/\ve)-\theta_0(y)\right)\theta_0^\prime(y) dy =
\ve\int\left(\chi_\ve(y,0)-v_\ve(y+x_\ve(0)/\ve)\right)\theta_0^\prime(y) dy.
$$
If $\|v_\ve\|_{L^2}\leq C$ then the latter equation has a solution $x_\ve(0)$ and $|x_\ve(0)|=o(\ve)$. 
\end{remark}

\medskip

\subsection{Reduction of the system to a single equation} 
\label{section:reduction}

The following theorem justifies the expansions \eqref{eq_form} and will be used to obtain a reduced system for unknowns $x_\ve(t)$ and $Q_\ve(y,t)$ by eliminating $u_{\ve}$.

\begin{theorem}\label{ansatz_existence}
(Validation of representation \eqref{eq_form}-\eqref{ortogonality}) Let $\rho_{\ve}$ and $P_{\ve}$ be solutions of problem 
\eqref{P1}-\eqref{P2} with initial data $\rho_\ve(x,0)=\theta_0(x/\ve)+\ve v_\ve(x/\ve)$ 
and $P_{\ve}(x,0)=p_\ve (\frac{x}{\ve})$, where
	\begin{equation}
	\|v_\ve\|_{L^2}<C,\;\; \|v_\ve\|_{L^{\infty}}\leq C/\ve, \;\; 
	\end{equation} 
	and 
	\begin{equation}
	\|p_\ve\|_{L^2(\mathbb R)}+ 
	\| \partial_y p_\ve\|_{L^{2}}\leq C. 
	\end{equation}
	Then there exists $x_{\ve}(t)$ such that the expansion \eqref{eq_form}-\eqref{ortogonality}  holds with $\|u_{\ve}(\cdot,t)\|_{L^2}~\leq~C$ for $t\in [0,T]$.
\end{theorem}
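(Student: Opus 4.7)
The plan is to construct $x_\ve(t)$ implicitly through the orthogonality condition \eqref{ortogonality}, substitute \eqref{eq_form} into \eqref{P1} to obtain a single parabolic equation for $u_\ve$, and then derive the bound $\|u_\ve(\cdot,t)\|_{L^2}\le C$ by an $L^2$ energy estimate combined with an $H^1$ estimate on $Q_\ve(y,t):=P_\ve(\ve y+x_\ve(t),t)$ coming from \eqref{P2}.

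First I would introduce
\begin{equation*}
\mathcal F_\ve(\xi,t):=\int \theta_0^\prime(y)\Bigl[\ve^{-1}\bigl(\rho_\ve(\ve y+\xi,t)-\theta_0(y)\bigr)-\chi_\ve(y,t)\Bigr]dy.
\end{equation*}
The argument of Remark \ref{remark:perturbed_interface}, applied to the initial data $\rho_\ve(x,0)=\theta_0(x/\ve)+\ve v_\ve(x/\ve)$, furnishes $x_\ve(0)=o(\ve)$ with $\mathcal F_\ve(x_\ve(0),0)=0$. Since $\partial_\xi \mathcal F_\ve=c_0/\ve+O(1)$ in a neighborhood of the desired $\xi$, the implicit function theorem extends $x_\ve(t)$ uniquely to $[0,T]$ as long as the associated $u_\ve$ remains $L^2$-bounded. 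Substituting \eqref{eq_form} into \eqref{P1}, using $\theta_0^{\prime\prime}=W^\prime(\theta_0)$, Taylor-expanding $W^\prime(\theta_0+\ve(\chi_\ve+u_\ve))$, and invoking \eqref{eq_for_psi_pm} to cancel $F(t)$ against $\mathcal L\chi_\ve$ far from the interface produces
\begin{equation*}
\ve^2\partial_t u_\ve+\mathcal L u_\ve=(\dot x_\ve-Q_\ve)\bigl(\theta_0^\prime+\ve\partial_y\chi_\ve+\ve\partial_y u_\ve\bigr)+\bigl[F(t)-\mathcal L\chi_\ve\bigr]+r_\ve,
\end{equation*}
where $\mathcal L u:=-\partial_y^2 u+W^{\prime\prime}(\theta_0) u$ is the linearized Allen--Cahn operator and $r_\ve$ is a remainder controlled by \eqref{bounds_on_psi} and \eqref{partia_t_psi}. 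By construction $F(t)-\mathcal L\chi_\ve$ decays at infinity, so the right-hand side belongs to $L^2(\mathbb R)$.

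Projecting this equation onto $\operatorname{span}\{\theta_0^\prime\}=\ker\mathcal L$, using \eqref{ortogonality} together with $\int\theta_0^\prime\,dy=1$, yields the Fredholm solvability identity
\begin{equation*}
c_0\dot x_\ve=\int Q_\ve\,(\theta_0^\prime)^2\,dy-F(t)+o(1),
\end{equation*}
which mirrors the expected limit \eqref{sharp_interface} and pins down $\dot x_\ve$; the component of the equation orthogonal to $\theta_0^\prime$ determines $u_\ve$ itself. The Poincar\'e-type coercivity $(\mathcal L u,u)\ge \alpha\|u\|_{H^1}^2$ on $\{u\perp\theta_0^\prime\}$ from \ref{appendix_poincare}, together with the test of the equation against $u_\ve$, Young's inequality to absorb the products $\dot x_\ve\partial_y u_\ve$ and $Q_\ve\partial_y u_\ve$ into the coercive term, and the uniform estimate $\|Q_\ve(\cdot,t)\|_{H^1}\le C$ obtained from \eqref{P2} by an energy bound in the spirit of \eqref{4aprior}, leads to the differential inequality $\ve^2\tfrac{d}{dt}\|u_\ve\|_{L^2}^2+\tfrac{\alpha}{2}\|u_\ve\|_{L^2}^2\le C$, and Gronwall then delivers the required uniform bound.

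The main obstacle is the circular dependence among $u_\ve$, $\dot x_\ve$ and $Q_\ve$: the formula for $\dot x_\ve$ requires $Q_\ve$, the equation for $u_\ve$ requires $\dot x_\ve$, and all three depend on $x_\ve$. I would close this by continuation, setting $T_\ve^\ast:=\sup\{t\in[0,T]:\|u_\ve(\cdot,t)\|_{L^2}\le M\}$ for $M$ chosen large relative to the initial data, and showing that on $[0,T_\ve^\ast]$ the above energy estimate improves the bound to $M/2$ for sufficiently small $\ve$, forcing $T_\ve^\ast=T$. The delicate point is that the singular $\ve^{-1}$ sources in the equation for $u_\ve$, namely $-\dot x_\ve\theta_0^\prime+(F(t)-\mathcal L\chi_\ve)-Q_\ve\theta_0^\prime$ before projection, must cancel exactly at leading order; this is precisely what \eqref{eq_for_psi_pm} and the orthogonality \eqref{ortogonality} arrange, and is what makes the specific choice of ansatz \eqref{eq_form} non-trivial.
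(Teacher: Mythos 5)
Your overall strategy coincides with the paper's: the same ansatz with the orthogonality condition defining $x_\ve$, the same projection onto $\theta_0'$ to extract the velocity relation, the same Poincar\'e-type coercivity of the linearized Allen--Cahn operator on $\{u\perp\theta_0'\}$, and a continuation argument in time. However, there is a genuine gap in the step that closes the estimates, and it is precisely the step where the paper does most of its work.

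First, the asserted uniform bound $\|Q_\ve(\cdot,t)\|_{H^1}\le C$ ``obtained from \eqref{P2} by an energy bound in the spirit of \eqref{4aprior}'' is not available independently of $u_\ve$. After rescaling, the source term in the equation for $Q_\ve$ is $-\beta\theta_0'-\ve\beta(\partial_y\chi_\ve+\partial_y u_\ve)$, so the energy identity for $\partial_y Q_\ve$ (cf. \eqref{dif_ineq_partial_y_P}) produces a term $C\ve^2\|u_\ve\|_{H^1}^2$ on the right. Thus the $H^1$ bound on $Q_\ve$ requires control of $\int_0^t\|u_\ve\|_{H^1}^2\,ds$, which itself only comes out of the $u_\ve$ energy inequality --- the very inequality whose right-hand side contains $\ve\|\partial_y Q_\ve\|_{L^2}^2$. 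Your continuation hypothesis $\|u_\ve\|_{L^2}\le M$ alone does not break this loop. The paper breaks it by a specific ordering: it substitutes the bounds $\|Q_\ve\|_{L^2}^2\le C+C\ve^2 M-\ve\frac{d}{dt}\|Q_\ve\|_{L^2}^2$ (and its analogue for $\partial_y Q_\ve$) into the $u_\ve$ inequality and integrates in time so that the derivative terms telescope, obtaining first the integral bound \eqref{FTFJVHGGH} on $\int_0^{\overline t}\|u_\ve\|_{H^1}^2$, and only then the pointwise bounds on $\|Q_\ve\|_{L^2}$ and $\|\partial_y Q_\ve\|_{L^2}$ that are fed back into the $u_\ve$ inequality at the critical time $\overline t$.

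Second, your differential inequality $\ve^2\tfrac{d}{dt}\|u_\ve\|_{L^2}^2+\tfrac{\alpha}{2}\|u_\ve\|_{L^2}^2\le C$ cannot hold as stated: the quadratic remainder $\tfrac{\ve}{2}W'''(\overline\xi_\ve)u_\ve^2$ in the Taylor expansion of $W'$ contributes a superlinear term $C\ve\int|u_\ve|^3\,dy$ to the energy balance, which after the interpolation $\int|u|^4\le C\|u\|_{H^1}\|u\|_{L^2}^3$ leaves a term of order $\ve\|u_\ve\|_{L^2}^6$ (see \eqref{FirstEnBforu_ve_bis}). If your inequality were true, Gronwall would give the bound outright and no continuation argument or smallness of $\ve$ would be needed; in fact it is exactly this $\ve M^3$ contribution that forces the choice of $M$ relative to the data and the restriction to small $\ve$ in the paper's Step 3. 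Your continuation framework is the right vehicle, but as written it is being used to patch an inequality that already presupposes the conclusion. (A minor further slip: $\partial_\xi\mathcal F_\ve\sim c_0/\ve^2$, not $c_0/\ve$, though this does not affect the implicit-function argument.)
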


\proof{$\;$}\\
\rm
\noindent STEP 1. ({\it coupled system for $u_\ve$, $Q_\ve$ and $V_\ve:=\dot x_\ve$}) Note that the maximum principle applied 
to \eqref{P1} yields $\|\rho_\ve\|_{L^\infty}\leq C$. This bound in conjunction 
with \eqref{bounds_on_psi} allow one to write down the expansion
$$
W^\prime\left(\theta_0+\ve(\chi_{\ve}+u_{\ve})\right)=
W^\prime(\theta_0+\ve\chi_{\ve})+\ve W^{\prime\prime}(\theta_0)u_\ve+
\ve^2W^{\prime\prime\prime}(\xi_\ve)\chi_\ve u_\ve+\frac{\ve^2}{2}
 W^{\prime\prime\prime}(\overline{\xi}_\ve) u_\ve^2,
$$
where $\xi_\ve$ and $\overline{\xi}_\ve$ are some bounded functions 
(while $\xi_\ve$ and $\overline{\xi}_\ve$ depend on $\theta_0$, $\chi_\ve$ and $u_\ve$, this dependence is omitted for brevity).
Then substituting  the expansion \eqref{eq_form} into equation
\eqref{P1} leads to 
\begin{equation}
\begin{aligned}
\ve^2\frac{\partial u_{\ve}}{\partial t}=&\partial_y^2 u_{\ve}-W^{\prime\prime}(\theta_0)u_{\ve}+V_\ve\theta_0^\prime-{Q_{\ve}}\theta_0^\prime+\partial^2_y\chi_{\ve}
+\frac{W^\prime(\theta_0)-W^\prime(\theta_0+\ve\chi_\ve)}{\ve}+F(t)-\ve^2\frac{\partial \chi_{\ve}}{\partial t}\\
&-\ve W^{\prime\prime\prime}(\xi_\ve)\chi_\ve u_\ve-\frac{\ve}{2}
W^{\prime\prime\prime}(\overline{\xi}_\ve)u_\ve^2
-\ve{Q_{\ve}}(\partial_y\chi_{\ve}+\partial_y u_{\ve})+\ve V_{\ve} (\partial_y\chi_{\ve}+\partial_y u_{\ve}).
\end{aligned}
\label{eq_for_u_ve_full}
\end{equation}
This equation is coupled with that for $Q_\ve$
\begin{equation}
\label{EqForPperepisano}
\ve\frac{\partial Q_{\ve}}{\partial t}=\partial_y^2 Q_{\ve}+V_\ve\partial_y Q_\ve-
 Q_{\ve}-\beta\theta_0^\prime-\ve\beta(\partial_y\chi_{\ve}+\partial_y u_{\ve}). 
\end{equation}
Finally, considering the solution $\rho_\ve$ as a given function we differentiate \eqref{eq_form} in time, multiply by $\theta_0^\prime(y)$ and integrate in $y$ over $\mathbb R$ to obtain the equation for $V_\ve$. Thanks to  \eqref{ortogonality} we get
\begin{equation}
\label{original_eq_for_interface}
V_\ve\left(c_0-\ve\int  (u_\ve+\chi_\ve) \theta_0^{\prime\prime} dy\right) =\ve^2\int \partial_t\chi_{\ve} \theta_0^\prime dy-
\ve\int \partial_t \rho_{\ve}(x_{\ve}(t)+\ve y,t)\theta_0^\prime dy.
\end{equation}
Note that if we obtain a uniform in $t$ a priori bound of the form $\|u_\ve\|_{L^2}\leq C$ with $C$ independent of $\ve$, \eqref{original_eq_for_interface} can be resolved with respect to $\dot x_\ve=V_\ve$ to come up with a well posed system \eqref{eq_for_u_ve_full}-\eqref{original_eq_for_interface}.

\medskip

\noindent STEP 2. ({\it energy estimates for
	$u_{\ve}$ and $Q_\ve$}) Represent $u_\ve$
as  $u_{\ve}=\theta_0^\prime w_{\ve}$, then multiply 
the equation \eqref{eq_for_u_ve_full} by $u_{\ve}$ and integrate in $y$ over $\mathbb R$. Since
$$
\int \left(-\partial_y^2 u_\ve+W^{\prime\prime}(\theta_0)u_{\ve}\right)u_\ve dy = \int (\theta_0^\prime)^2(\partial_y w_{\ve})^2 dy,\ \text{and}\  \int \theta_0^\prime u_{\ve} dy=0,\ 
\int \partial_y\chi_\ve u_{\ve} dy=0,\  \int \partial_y u_\ve u_{\ve} dy=0,
$$
we get
\begin{equation}
\begin{aligned}
\frac{\ve^2}{2} \frac{d}{dt}\int u_{\ve}^2dy+
\int (\theta_0')^2(\partial_y w_{\ve})^2 dy\leq &
\int \left(R_1- Q_\ve \theta_0^\prime -\ve Q_\ve \partial_y \chi_\ve\right) u_{\ve}dy \\
&-\ve \int Q_\ve \partial_y u_{\ve} u_\ve dy
+C\ve\int (u_{\ve}^2+|u_\ve|^3)dy,
\end{aligned}
\label{FirstEnBforu_ve}
\end{equation}
where  $R_1=\partial_y^2\chi_\ve +\dfrac{W^\prime(\theta_0)-W^\prime(\theta_0+\ve \chi_{\ve})}{\ve}-\ve^2\dfrac{\partial \chi_{\ve }}{\partial t}$.
Due to the construction of $\chi_\ve$ we have, $\|R_1\|_{L^2}\leq C$ with $C$ independent of $\ve$ and $t$. Also, by a  Poincar\'{e} type inequality {(see \ref{appendix_poincare})}
\begin{equation*}
\int (\theta_0^\prime)^2(\partial_y w_{\ve})^2 dy\geq C_{\theta_0}\|u_\ve\|_{H^1}^2 
\end{equation*}
 with $C_{\theta_0}>0$ independent of $u_\ve$. Thus \eqref{FirstEnBforu_ve} implies that
 \begin{equation}
 \begin{aligned}
 \frac{\ve^2}{2} \frac{d}{dt}\| u_{\ve}\|_{L^2}^2+\frac{C_{\theta_0}}{2}
 \|u_\ve\|_{H^1}^2&\leq C 
+C_1 \|Q_\ve\|^2_{L^2} 
 +\frac{\ve}{2} \int \partial_y Q_\ve  u_{\ve}^2  dy
 +C\ve\int|u_\ve|^3dy+\frac{C_{\theta_0}}{2}
 \left(\frac{\|u_\ve\|^2_{L^2}}{2}-\|u_\ve\|_{H^1}^2\right)\\
 &\leq C+C_1 \|Q_\ve\|^2_{L^2}+\ve\|\partial_y Q_\ve\|_{L^2}^2+C_2\ve\|u_\ve\|^6_{L^2}
 %
 \end{aligned}
 \label{FirstEnBforu_ve_bis}
 \end{equation}
 where we have also used the interpolation inequality $\int |u|^4dy\leq C\|u\|_{H^1}\|u\|_{L^2}^3$ which yields $\int |u|^4dy\leq C(\|u\|_{H^1}^2+\|u\|_{L^2}^6)$. Next we derive differential inequalities  
 \begin{equation}\label{dif_ineq_P}
 \ve\frac{d}{dt} \|Q_\ve\|^2_{L^2}+\|\partial_y Q_\ve\|_{L^2}^2+
  \|Q_\ve\|^2_{L^2}\leq C+C\ve^2\|u_\ve\|_{L^2}^2,
 \end{equation}
 \begin{equation}\label{dif_ineq_partial_y_P}
 \ve\frac{d}{dt} \|\partial_y Q_\ve\|^2_{L^2}+\|\partial_y^2 Q_\ve\|_{L^2}^2+
 \|\partial_y Q_\ve\|^2_{L^2}\leq C+C\ve^2\|u_\ve\|_{H^1}^2,
 \end{equation}
 by multiplying \eqref{EqForPperepisano} by $Q_\ve$ and $\partial^2_y Q_\ve$, 
 and integrating on $\mathbb{R}$.
 
 \medskip
 
 \noindent STEP 3. ({\it uniform  bound for $\|u_\ve\|_{L^2}$}) We show that differential inequalities \eqref{FirstEnBforu_ve_bis}-\eqref{dif_ineq_partial_y_P} imply  that $\|u_\ve\|^2_{L^2}$ remains uniformly bounded on $[0,T]$ when $\ve>0$
is small. To this end fix $M>\max\{1,\|u_\ve(\,\cdot\,,0)\|^2_{L^2}\}$, to be specified later, and consider the first time $t=\overline t\in (0,T)$ when $\|u_\ve(\,\cdot\,,t)\|^2_{L^2}$ reaches $M$ (if any). We have,
$\|u_\ve(\,\cdot\,,t)\|^2_{L^2}<M$ on $(0,\overline{t})$  and 
 \begin{equation}
 \label{good_proizvodnaya}
 \frac{d}{dt}\|u_\ve\|^2_{L^2}\geq 0 \quad \text{at}\ t=\overline{t}. 
 \end{equation}
 It follows from \eqref{dif_ineq_P} that 
$ \|Q_\ve\|^2_{L^2}\leq C+C\ve^2 M-\ve\frac{d}{dt} \|Q_\ve\|^2_{L^2}$; the same bound also holds for $\|\partial_y Q_\ve\|^2_{L^2}$. Substitute these bounds in \eqref{FirstEnBforu_ve_bis}
 and integrate from $0$ to $\overline{t}$ to conclude that 
 \begin{equation}
\int_0^{\overline{t}} \|u_\ve\|^2_{H^1} dt \leq C\left(\overline{t} +\ve^2 
\|u_\ve(\,\cdot\,,0)\|^2_{L^2}+\ve \|Q_\ve(\,\cdot\,,0)\|^2_{L^2}+\ve \overline{t} M^3)\right)
\label{FTFJVHGGH}
\end{equation}
with a constant $C$ independent of $\overline{t}$, M and $\ve$.  Now integrate 
\eqref{dif_ineq_partial_y_P}  from $0$ to $t$, in view of  \eqref{FTFJVHGGH} this results in the following pointwise inequality
$$
\|\partial_y Q_\ve\|^2_{L^2}\leq C\left(\frac{1}{\ve} +\ve^3 
 \|u_\ve(\,\cdot\,,0)\|^2_{L^2}+\ve^2 \|Q_\ve(\,\cdot\,,0)\|^2_{L^2}+\ve^2 M^3\right)+\|\partial_y Q_\ve(\,\cdot,,0)\|^2_{L^2} \quad \forall t \in(0,\overline{t}).
$$ 
Also, Gronwall's inequality applied to  \eqref{dif_ineq_P} yields 
$$
\| Q_\ve\|^2_{L^2}\leq C(1+\ve^2M) 
+ \|Q_\ve(\,\cdot\,,0)\|^2_{L^2} \quad \forall t \in(0,\overline{t}).
$$
We substitute the latter two bounds into  \eqref{FirstEnBforu_ve_bis} and consider the resulting inequality at $t=\overline{t}$. In view of \eqref{good_proizvodnaya} we have
$$
\|u\|_{L^2}^2(\,\cdot\,, \overline{t})\leq\|u(\,\cdot\,, \overline{t})\|_{H^1}^2\leq C(1+\|Q_\ve(\,\cdot\,,0)\|^2_{L^2}+\ve\|\partial_y Q_\ve(\,\cdot,,0)\|^2_{L^2}+\ve^4 
\|u_\ve(\,\cdot\,,0)\|^2_{L^2}+\ve M^3),
$$
where $C$ is independent of $\overline{t}$, M and $\ve$.
Thus, taking $M$ bigger than 
$$
\overline{M}=\max\{\|u_\ve(\,\cdot\,,0)\|^2_{L^2},C(1+\|Q_\ve(\,\cdot\,,0)\|^2_{L^2}+\ve\|\partial_y Q_\ve(\,\cdot,,0)\|^2_{L^2}+\ve^4 
\|u_\ve(\,\cdot\,,0)\|^2_{L^2}) \},
$$ 
e.g. $M:=2\overline{M}$, and considering sufficiently small $\ve>0$ 
we see that $\|u(\,\cdot\,, \overline{t})\|_{L^2}^2<M$. This shows 
that $\|u(\,\cdot\,, \overline{t})\|_{L^2}^2<M$ on $[0,T]$, and the Theorem is proved. $\qed$

Note that as a bi-product of the above  proof we obtained the integral bound
\begin{equation}
\label{integ_u_ve_H1}
\int_0^T\|u_\ve\|_{H^1}^2 d t\leq C,
\end{equation}
which plays an important role in the following derivation of a reduced system for $V_\ve$ and $Q_\ve$.  

The special form 
of the representation 
\eqref{eq_form}
(cf. \eqref{ortogonality}) together with estimates of Theorem \ref{ansatz_existence} and \eqref{integ_u_ve_H1} allow us 
to derive a system of the form 
\eqref{eq_for_V_key_steps}-\eqref{eq_for_A_key_steps} 
for $V_\ve$ and $Q_\ve$. To this end multiply \eqref{eq_for_u_ve_full} by $\theta_0^\prime(y)$ and integrate in $y$ over $\mathbb R$, this results in 
\begin{eqnarray}\label{interf_prelim}
\left(c_0+\ve\tilde{\mathcal{O}}_{\ve}(t)\right)V_{\ve}(t)-\int (\theta_0')^2A_{\ve}dy + F(t)=\ve\mathcal{O}_{\ve}(t)+\tilde {o}_\ve(t),
\end{eqnarray}
where $A_\ve$ is the solution of 
\begin{equation}\label{interf_prelim_A}
\ve\frac{\partial A_{\ve}}{\partial t}= \partial_y^2A_{\ve}+V_{\ve}(t)\partial_y A_{\ve}-A_{\ve}-\beta {\theta_0^\prime}
\end{equation}
 with  the initial condition $A_\ve(y,0)=p_\ve(y)$($=Q_\ve(y,0)$) and 
\begin{eqnarray}
\tilde{\mathcal{O}}_{\ve}(t)&:=&-\int (\chi_{\ve}+u_{\ve})\theta_0^{\prime\prime} dy, 
\nonumber
\\
\mathcal{O}_{\ve}(t)&:=&
\int \left(\frac{1}{2}W^{\prime\prime\prime}(\tilde{\xi}_\ve) \chi_\ve^2+
W^{\prime\prime\prime}(\xi_\ve)\chi_\ve u_{\ve}+\frac{1}{2}W^{\prime\prime\prime}(\overline{\xi}_\ve)u_{\ve}^2\right)
\theta_0^\prime dy \nonumber\\
&&+\frac{1}{\ve}\int(Q_\ve-A_\ve)(\theta_0^\prime)^2dy+\int Q_{\ve}\partial_y(\chi_{\ve}+u_{\ve})\theta_0^\prime dy,
\label{cal_O}\\
\nonumber
\tilde o_\ve(t)&:=&\ve^2\int \frac{\partial \chi_\ve}{\partial t}\theta_0^\prime dy
\end{eqnarray}
with $\tilde \xi_\ve$ being a bounded function (as well as ${\xi}_\ve$ and $\overline{\xi}_\ve$). It follows from \eqref{partia_t_psi} that $\tilde{o}_\ve$ uniformly converges to $0$  
as $\ve\to 0$ ($|\tilde{o}_\ve|\leq C\ve$ if $F$ is Lipschitz or $W^{\prime\prime}(0)=W^{\prime\prime}(1)$). Next we show that $\mathcal{O}_{\ve}(t)$ is bounded in $L^{\infty}(0,T)$ uniformly in $\ve$ .

\begin{proposition} Let 
	conditions of Theorem \ref{ansatz_existence} be satisfied, then $\mathcal{O}_{\ve}(t)$ introduced in \eqref{cal_O}  is  bounded uniformly in $t\in [0,T]$ and $\ve$. 
\end{proposition}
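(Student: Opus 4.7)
The plan is to estimate each of the five summands comprising $\mathcal{O}_\ve(t)$ separately, leveraging the uniform bounds of Theorem \ref{ansatz_existence} (in particular $\|u_\ve(\cdot,t)\|_{L^2}\leq C$ and, from its proof, $\|Q_\ve(\cdot,t)\|_{L^2}\leq C$), the $L^\infty$-bound $\|\chi_\ve\|_{L^\infty}\leq C$ from \eqref{bounds_on_psi}, the exponential decay of $\theta_0'$, and the integrated $H^1$ bound \eqref{integ_u_ve_H1}. The three polynomial integrals $\int\tfrac12 W'''(\tilde\xi_\ve)\chi_\ve^2\theta_0'\,dy$, $\int W'''(\xi_\ve)\chi_\ve u_\ve\theta_0'\,dy$, and $\int\tfrac12 W'''(\overline\xi_\ve)u_\ve^2\theta_0'\,dy$ are handled immediately: since $W'''$ is bounded at the bounded arguments $\tilde\xi_\ve,\xi_\ve,\overline\xi_\ve$ and $\theta_0'\in L^p(\mathbb{R})$ for every $p$, Cauchy--Schwarz yields pointwise-in-$t$ bounds of the form $C\|\chi_\ve\|_{L^\infty}^2\|\theta_0'\|_{L^1}$, $C\|\chi_\ve\theta_0'\|_{L^2}\|u_\ve\|_{L^2}$, and $C\|\theta_0'\|_{L^\infty}\|u_\ve\|_{L^2}^2$ respectively.

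For the term $\frac1\ve\int (Q_\ve-A_\ve)(\theta_0')^2\,dy$ I would analyze the rescaled difference $\tilde B_\ve := (Q_\ve - A_\ve)/\ve$. Subtracting \eqref{interf_prelim_A} from \eqref{EqForPperepisano} and rescaling,
\begin{equation*}
\ve\,\partial_t \tilde B_\ve = \partial_y^2 \tilde B_\ve + V_\ve\partial_y \tilde B_\ve - \tilde B_\ve - \beta\,\partial_y(\chi_\ve + u_\ve),\qquad \tilde B_\ve(\cdot,0)=0.
\end{equation*}
Multiplying by $\tilde B_\ve$ and integrating in $y$ (the transport term vanishes), and using $\|\partial_y\chi_\ve\|_{L^2} = |\chi_\ve^+-\chi_\ve^-|\,\|\theta_0'\|_{L^2} \leq C$ together with Young's inequality on the $\partial_y u_\ve$ source, yields
$\ve\tfrac{d}{dt}\|\tilde B_\ve\|_{L^2}^2+\|\tilde B_\ve\|_{L^2}^2\leq C + C\|u_\ve\|_{H^1}^2$.
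The Duhamel representation together with \eqref{integ_u_ve_H1} then controls $\tilde B_\ve$, and the weighted integral follows by Cauchy--Schwarz, $|\int \tilde B_\ve(\theta_0')^2\,dy| \leq \|\tilde B_\ve\|_{L^2}\,\|(\theta_0')^2\|_{L^2}$.

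For the last summand $\int Q_\ve\partial_y(\chi_\ve+u_\ve)\theta_0'\,dy$, I would first use the explicit formula $\partial_y\chi_\ve=(\chi_\ve^+-\chi_\ve^-)\theta_0'$ to reduce the $\chi_\ve$ portion to $(\chi_\ve^+-\chi_\ve^-)\int Q_\ve(\theta_0')^2\,dy$, which is bounded pointwise by $C\|Q_\ve\|_{L^2}$. The $\partial_y u_\ve$ piece is then integrated by parts,
\begin{equation*}
\int Q_\ve\,\partial_y u_\ve\,\theta_0'\,dy = -\int (\partial_y Q_\ve)\,u_\ve\,\theta_0'\,dy - \int Q_\ve\,u_\ve\,\theta_0''\,dy,
\end{equation*}
where the second integral is bounded pointwise by $\|Q_\ve\|_{L^2}\|u_\ve\|_{L^2}\|\theta_0''\|_{L^\infty}$, and the first by $\|\partial_y Q_\ve\|_{L^2}\|u_\ve\|_{L^2}\|\theta_0'\|_{L^\infty}$, with $\|\partial_y Q_\ve\|_{L^2}$ controlled through \eqref{dif_ineq_partial_y_P}.

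The main obstacle is that $\|u_\ve\|_{H^1}$ and $\|\partial_y Q_\ve\|_{L^2}$ are known only in the time-integrated sense (via \eqref{integ_u_ve_H1} and the time-integrated form of \eqref{dif_ineq_partial_y_P}); pointwise in $t$ they may be as large as $O(\ve^{-1/2})$. The essential technical point is that the smoothing effect of the semigroup in the Duhamel formula for $\tilde B_\ve$ --- whose exponential kernel $\ve^{-1}e^{-t/\ve}$ has unit $L^1$-mass --- absorbs the $L^1$-in-time integrability of $\|u_\ve\|_{H^1}^2$ into the required uniform control of $\mathcal{O}_\ve(t)$, which together with the pointwise estimates for the remaining summands yields the claimed bound.
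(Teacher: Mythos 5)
Your decomposition into the five summands, the treatment of the three polynomial integrals, and the integration by parts in the final summand all track the paper's proof closely (the paper works with $B_\ve=Q_\ve-A_\ve$ rather than the rescaled $\tilde B_\ve=B_\ve/\ve$, and writes the last term as $-\int(\chi_\ve+u_\ve)\partial_y(Q_\ve\theta_0')\,dy$, but these are cosmetic differences). However, your handling of the term $\frac1\ve\int(Q_\ve-A_\ve)(\theta_0')^2\,dy$ has a genuine gap. Applying Young's inequality directly to the source $\beta\,\partial_y u_\ve$ leaves you with
\begin{equation*}
\ve\tfrac{d}{dt}\|\tilde B_\ve\|_{L^2}^2+\|\tilde B_\ve\|_{L^2}^2\leq C+C\|u_\ve\|_{H^1}^2,
\end{equation*}
and $\|u_\ve\|_{H^1}^2$ is controlled only in $L^1(0,T)$ by \eqref{integ_u_ve_H1}. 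The Duhamel formula then gives
$\|\tilde B_\ve(t)\|_{L^2}^2\leq \frac1\ve\int_0^t e^{-(t-s)/\ve}\bigl(C+C\|u_\ve(s)\|_{H^1}^2\bigr)\,ds$, and the second contribution is bounded only by $\frac{C}{\ve}\int_0^T\|u_\ve\|_{H^1}^2\,ds\leq C/\ve$. Your closing claim that the unit $L^1$-mass of the kernel $\ve^{-1}e^{-\cdot/\ve}$ ``absorbs'' the $L^1$-in-time integrability into a uniform-in-$t$ bound is false: the convolution of an $L^1$ kernel with an $L^1$ function is $L^1$, not $L^\infty$, and if $\|u_\ve\|_{H^1}^2$ concentrates on a time interval of length $O(\ve)$ near $t$ the convolution really is of order $1/\ve$. (Contrast this with your bound on $\|\partial_y Q_\ve\|_{L^2}$ from \eqref{dif_ineq_partial_y_P}, which does work precisely because the source there carries an explicit factor $\ve^2$ that compensates the $1/\ve$ from the kernel.)

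The fix is exactly what the paper does: integrate the $\partial_y u_\ve$ source by parts against $\tilde B_\ve$, so that
$-\beta\int\partial_y u_\ve\,\tilde B_\ve\,dy=\beta\int u_\ve\,\partial_y\tilde B_\ve\,dy\leq\frac12\|\partial_y\tilde B_\ve\|_{L^2}^2+C\|u_\ve\|_{L^2}^2$,
absorb $\|\partial_y\tilde B_\ve\|_{L^2}^2$ into the dissipation produced by $\int\partial_y^2\tilde B_\ve\cdot\tilde B_\ve\,dy$, and use the \emph{pointwise-in-time} bound $\|u_\ve\|_{L^2}\leq C$ from Theorem \ref{ansatz_existence}. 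This yields $\ve\frac{d}{dt}\|\tilde B_\ve\|_{L^2}^2+\|\tilde B_\ve\|_{L^2}^2\leq C$, hence $\|\tilde B_\ve\|_{L^2}\leq C$ uniformly by Gronwall (equivalently, $\|B_\ve\|_{L^2}^2\leq C\ve^2$ as in the paper), and the term is then bounded by Cauchy--Schwarz exactly as you propose. With this one correction your argument closes.
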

\proof $\;$\\
By Theorem  \ref{ansatz_existence} the first term in  
\eqref{cal_O} is bounded. To estimate the remaining terms 
represent  $Q_\ve$ as $Q_\ve=A_\ve+B_\ve$, where 
$B_\ve$ solves \begin{equation}
\label{Eq_for_B}
\ve\frac{\partial B_{\ve}}{\partial t}=\partial_y^2 B_{\ve}+V_\ve\partial_y B_\ve-
B_{\ve}-\ve\beta(\partial_y\chi_{\ve}+\partial_y u_{\ve})
\end{equation}
  with zero 
initial condition. Multiply this equation by $B_\ve$  and integrate on $\mathbb{R}$, then multiply   \eqref{Eq_for_B} by $\partial^2_y B_\ve$ and integrate on $\mathbb{R}$ to obtain 
\begin{equation}
\label{dif_ineq_B}
\ve\frac{d}{dt} \|B_\ve\|^2_{L^2} +\|B_\ve\|^2_{L^2}\leq C\ve^2(1+\|u_\ve\|_{L^2}^2),
\end{equation}
$$
\frac{d}{dt} \|\partial_y B_\ve\|^2_{L^2}\leq C\ve(1+\|u_\ve\|_{H^1}^2).
$$
After integrating these inequalities from $0$ to $t$ we make use of \eqref{integ_u_ve_H1} to derive
$\|B_\ve\|_{H^1}^2\leq C\ve$. Also, Gronwall's inequality applied to \eqref{dif_ineq_B}  yields $\|B_\ve\|_{L^2}^2\leq C\ve^2$. 
Similarly, in order to bound $\|A_\ve\|_{L^2}$ and $\|\partial_y A_\ve\|_{L^2}$ we first get 
\begin{equation*}
\ve\frac{d}{dt} (\|A_\ve\|^2_{L^2} +\|\partial_y A_\ve\|^2_{L^2})+(\|A_\ve\|^2_{L^2} +\|\partial_y A_\ve\|^2_{L^2})\leq C,
\end{equation*} 
then apply  Gronwall's inequality to conclude that  $\|A_\ve\|_{H^1}^2\leq C$. Thus,
\begin{equation*}
\begin{aligned}
\frac{1}{\ve}\int|Q_\ve-A_\ve|(\theta_0^\prime)^2dy+\left|\int Q_{\ve} \partial_y(\chi_{\ve}+u_{\ve}) \theta_0^\prime dy\right|&= 
\frac{1}{\ve}\int|B_\ve|(\theta_0^\prime)^2dy+\left|\int  (\chi_{\ve}+u_{\ve}) \partial_y (Q_{\ve}\theta_0^\prime) dy\right|\\
&\leq \frac{C}{\ve}\|B_\ve\|_{L^2}+C(1+\|u\|_{L^2})(\|A_\ve\|_{H^1}+\|B_\ve\|_{H^1})\leq C_1.
\end{aligned}
\end{equation*}
$\square$

From now on $\tilde{\mathcal{O}}_\ve$, $\tilde o_\ve$ and ${\mathcal{O}}_\ve$ 
are  regarded as given functions in the reduced system \eqref{interf_prelim}-\eqref{interf_prelim_A}, and their influence 
on the behavior of the system is small.  Observe that taking the formal limit as $\ve \to 0$ in  the system \eqref{interf_prelim}-\eqref{interf_prelim_A}
leads to 
\eqref{sharp_interface}.  Indeed, the formal limit as $\ve\to 0$  in \eqref{interf_prelim_A} is nothing but \eqref{psi_in_formal} whose unique solution is $\psi(y;V(t))$. Then substituting this function into the limit of  \eqref{interf_prelim} yields \eqref{sharp_interface}.



\subsection{Sharp Interface Limit for small $\beta$ by contraction mapping principle.}

The following Theorem establishes the sharp interface limit for sufficiently small $\beta$.  We assume that initial data $P_\ve(\ve y,0)=A_\ve(y,0)$ are bounded in $L^2(\mathbb{R})$ by a constant $C$ independent of $\ve$:
\begin{equation}\label{cond_on_A_ve}
\|A_{\ve}(\,\cdot\,,0)\|_{L^2}<C.
\end{equation} 
\begin{theorem} \label{theorem:contraction} (Sharp Interface Limit for subcritical $\beta$)
	Let $A_{\ve}$, $V_{\ve}$ be solution of the reduced system \eqref{interf_prelim}-\eqref{interf_prelim_A} with $\tilde{\mathcal{O}_{\ve}},\mathcal{O}_{\ve}\in L^{\infty}(0,T)$ and 
	$\tilde o_\ve$ converging to $0$ in $L^\infty(0,T)$ as $\ve\to 0$.
	Assume also that 
	 \eqref{cond_on_A_ve} holds. Then there exists $\beta_0>0$ (e.g., $\forall$ $0< \beta_0<2/\max\{\|(\theta_0^\prime)^2\|_{L^2}, \sqrt{c_0}\}$) such that for $0\leq\beta<\beta_0$
	\begin{equation}\label{v_converg}
	V_{\ve}(t)\rightarrow V_0(t) \text{ in }L^{\infty}(\delta,T)\text{ as }\ve \to 0,\;\;\forall\delta>0,
	\end{equation}
	where  $V_0$ is the unique solution of (\ref{sharp_interface}).     
\end{theorem}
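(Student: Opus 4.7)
The plan is to recast \eqref{interf_prelim}--\eqref{interf_prelim_A} as a fixed-point equation for $V_\ve$ on $C([0,T])$ and compare it with the limiting fixed-point equation. For $V\in C([0,T])$ let $A[V]$ denote the solution of \eqref{interf_prelim_A} with initial datum $p_\ve$, and set
$$
\mathcal{T}_\ve(V)(t):=\frac{1}{c_0+\ve\tilde{\mathcal{O}}_\ve(t)}\Bigl[\int(\theta_0')^2 A[V](y,t)\,dy-F(t)+\ve\mathcal{O}_\ve(t)+\tilde o_\ve(t)\Bigr],\quad \mathcal{T}_0(V)(t):=\frac{\Phi_\beta(V(t))-F(t)}{c_0},
$$
so that $V_\ve=\mathcal{T}_\ve(V_\ve)$ and $V_0=\mathcal{T}_0(V_0)$. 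I would combine a contraction estimate for $\mathcal{T}_0$ with a consistency estimate $\mathcal{T}_\ve(V)\to\mathcal{T}_0(V)$ in $L^\infty(\delta,T)$, valid on bounded subsets of $C([0,T])$, then read off \eqref{v_converg} from
$$
\|V_\ve-V_0\|_{L^\infty(\delta,T)}\leq \|\mathcal{T}_\ve(V_\ve)-\mathcal{T}_0(V_\ve)\|_{L^\infty(\delta,T)}+q\,\|V_\ve-V_0\|_{L^\infty(\delta,T)}.
$$

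The contraction property of $\mathcal{T}_0$ rests on the linear dependence of $\psi$ on $\beta$ in \eqref{psi_in_formal} (so $\psi=\beta\tilde\psi$) together with its smoothness in $V$. Differentiating \eqref{psi_in_formal} in $V$ gives $\partial_y^2\psi_V+V\partial_y\psi_V-\psi_V=-\partial_y\psi$, and standard elliptic energy bounds yield $\|\psi_V\|_{L^2}\leq C\beta$, hence $|\Phi'_\beta(V)|\leq C\beta\|(\theta_0')^2\|_{L^2}$. The resulting rate $q=C\beta\|(\theta_0')^2\|_{L^2}/c_0$ is strictly less than one precisely under the threshold on $\beta$ stated in the theorem.

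For consistency, I would introduce $R:=A[V]-\psi(\cdot,V(\cdot))$, which satisfies
$$
\ve\partial_t R=\partial_y^2 R+V\partial_y R-R-\ve\,\partial_V\psi(\cdot,V(t))\dot V(t),\qquad R(\cdot,0)=p_\ve-\psi(\cdot,V(0)).
$$
Multiplying by $R$, noting that the drift term is conservative, and applying Gronwall's lemma produces $\|R(\cdot,t)\|_{L^2}^2\leq Ce^{-t/\ve}\|R(\cdot,0)\|_{L^2}^2+C\ve\int_0^t \dot V(s)^2\,ds$; hence $\|R(\cdot,t)\|_{L^2}\to 0$ in $L^\infty(\delta,T)$ whenever $V$ is Lipschitz. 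Consequently $\int(\theta_0')^2 A[V]\,dy\to\Phi_\beta(V)$ in $L^\infty(\delta,T)$, and the small perturbations $\ve\tilde{\mathcal{O}}_\ve$, $\ve\mathcal{O}_\ve$, $\tilde o_\ve$ vanish in $L^\infty$ by the assumptions of the theorem. Applying consistency at $V=V_\ve$ closes the loop, provided $V_\ve$ is uniformly Lipschitz on $[0,T]$.

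The main obstacle I anticipate is twofold. First, the initial temporal layer: since $p_\ve$ need not lie on the slow manifold $\{\psi(\cdot,V)\}$, the datum $R(\cdot,0)$ is only $O(1)$ in $L^2$, and only its exponential dissipation $e^{-t/\ve}$ makes it negligible once $t\geq\delta$, which is exactly why the convergence \eqref{v_converg} is stated on $[\delta,T]$ rather than $[0,T]$. Second, obtaining a uniform Lipschitz bound on $V_\ve$ requires a bootstrap: formally differentiating the fixed-point equation gives $(c_0+o(1))\dot V_\ve=-\dot F+(d/dt)\int(\theta_0')^2 A_\ve\,dy+o(1)$, and one must control $(d/dt)\int(\theta_0')^2 A_\ve\,dy$ using the singularly parabolic equation \eqref{interf_prelim_A} combined with the uniform bounds on $A_\ve$ furnished by the estimates in Section~\ref{section:reduction}; the smallness of $\beta$ is again what makes this implicit estimate closable and pins down the explicit threshold on $\beta_0$.
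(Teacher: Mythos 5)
Your overall architecture (fixed point plus consistency) is the right one, and your treatment of the initial temporal layer via the exponential dissipation $e^{-t/\ve}$ matches the paper's Step 1. But the specific splitting you chose creates a genuine gap. You prove contraction for the \emph{limit} map $\mathcal{T}_0$ and then must estimate the consistency term $\|\mathcal{T}_\ve(V_\ve)-\mathcal{T}_0(V_\ve)\|$ at the \emph{unknown} function $V_\ve$. Your own consistency argument (the equation for $R=A[V]-\psi(\cdot,V(\cdot))$, which contains the source $\ve\,\partial_V\psi\,\dot V$) requires $V$ to be differentiable with $\dot V$ controlled uniformly in $\ve$. There is no way to obtain such a bound for $V_\ve$ under the hypotheses of the theorem: $F$ is only continuous, and $\tilde{\mathcal{O}}_\ve,\mathcal{O}_\ve$ are only $L^\infty(0,T)$, so the bootstrap you sketch — differentiating the fixed-point identity, which produces $\dot F$, $\frac{d}{dt}\mathcal{O}_\ve$ and $\frac{1}{\ve}\int(\theta_0')^2(\partial_y^2A_\ve+V_\ve\partial_yA_\ve-A_\ve-\beta\theta_0')\,dy$ — does not close; the last term is a priori only $O(1/\ve)$. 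Even $V_0$ itself need not be differentiable (the paper explicitly mollifies it to $V_{0\ve}$ with $|\dot V_{0\ve}|\leq C/\sqrt{\ve}$), so regularity at $V_\ve$ is out of reach.

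The paper's proof removes this obstruction by putting the contraction on the $\ve$-level map $\mathcal{F}_\ve$ itself rather than on $\mathcal{T}_0$: for two inputs $V_1,V_2$ the difference $\bar A=A_1-A_2$ satisfies a linear equation with source $(V_1-V_2)\partial_yA_2$, and a pointwise-in-time $L^2$ energy estimate combined with the a priori bound $\|A_2\|_{L^2}^2\leq c_0\beta^2$ gives $\|\mathcal{F}_\ve(V_1)-\mathcal{F}_\ve(V_2)\|\leq\frac{\sqrt{c_0}}{2}\beta\|V_1-V_2\|$ with no time regularity of $V_1,V_2$ required. The triangle inequality then reduces everything to $\|\mathcal{F}_\ve(V_0)-V_0\|\to 0$, i.e.\ consistency at the single \emph{known} function $V_0$, which is handled by mollification. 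If you reorganize your argument as $\|V_\ve-V_0\|\leq\|\mathcal{T}_\ve(V_\ve)-\mathcal{T}_\ve(V_0)\|+\|\mathcal{T}_\ve(V_0)-\mathcal{T}_0(V_0)\|$ and prove the contraction for $\mathcal{T}_\ve$ by the energy estimate above, your remaining steps (including the boundary-layer discussion and the computation of $\Phi_\beta'$, which reappears in the paper's stability analysis) go through.
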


\proof {$\;$}\\
STEP 1 ({\it Study of the boundary layer at $t=0$}). 
We show that the function $\eta_{\ve}(y,t)=A_\ve(y,t)-\psi(y,V_0(0) )$ behaves as a boundary layer at $t=0$. Since $\psi$ satisfies  $\partial_y^2\psi+V_0(0)\partial_y\psi-\psi=\beta\theta_0^\prime$,
$c_0V_0(0)=\int (\theta_0')^2 \psi dy-F(0)$ and $A_\ve$, $V_\ve$ solve \eqref{interf_prelim}-\eqref{interf_prelim_A}, we have 
\begin{equation}
\begin{aligned}
\ve \partial_t \eta_\ve =\partial_y^2\eta_{\ve}&+V_\ve\partial_y\eta_{\ve}-\eta_\ve+\frac{1}{c_0}
\partial_y\psi\int (\theta_0')^2 \eta_\ve dy\\
&+\frac{\partial_y \psi}{c_0+\ve\tilde{\mathcal{O}}_{\ve}}
\left(
F(0)(1+\ve\tilde{\mathcal{O}}_{\ve}/c_0)-F(t)-\ve\frac{\tilde{\mathcal{O}}_{\ve}}
{c_0} \int (\theta_0')^2A_\ve+\ve\mathcal{O}_{\ve}+\tilde {o}_\ve
\right).
\end{aligned}
\label{pde_for_eta}
\end{equation}
Multiply \eqref{pde_for_eta} by $\eta_\ve$ and integrate on $\mathbb{R}$,
\begin{equation*}
\frac{\ve}{2}\frac{d}{dt}\|\eta\|_{L^2}^2 +\|\partial_y\eta\|_{L^2}^2 +
	\|\eta\|_{L^2}^2 \leq \frac{1}{c_0} \|(\theta^\prime)^2\|_{L^2} \|\partial_y\psi\|_{L^2}
	\|\eta\|_{L^2} ^2+C
	\left(|F(0)-F(t)|+|\tilde o_\ve|+\ve\right)(1+\|\eta\|_{L^2}^2 ).
\end{equation*}
Note that $\|\partial_y \psi\|_{L^2}^2+\|\psi\|_{L^2}^2=-\beta\int\theta_0^\prime\psi dy$, therefore $\|\partial_y \psi\|_{L^2}^2\leq \beta^2\| \theta_0^\prime\|_{L^2}^2/4=\beta^2 c_0/4$. Thus, if $\beta \|(\theta^\prime)^2\|_{L^2}<2$, then for sufficiently small $\ve$ and $0<t<\sqrt{\ve}$ we have 
\begin{equation}
\label{energy_eq_for_eta}
\frac{\ve}{2}\frac{d}{dt}\|\eta\|_{L^2}^2 +\omega  \|\eta\|_{L^2} ^2\leq 
C\left(|F(0)-F(t)|+|\tilde o_\ve|+\ve\right)
\end{equation}  
with some $\omega>0$ independent of $\ve$. Now apply Gronwall's inequality
to  \eqref{energy_eq_for_eta} to obtain that 
\begin{equation*}
\| \eta_\ve\|_{L^2}^2  \leq Ce^{-2\omega t/\ve}+C\max_{\tau\in(0,t)}\left(|F(0)-F(t)|+|\tilde o_\ve|\right)+C\ve\quad
\forall  t\in[0,\sqrt{\ve}],
\end{equation*}  
in particular, 
\begin{equation}
\label{bound_on_bl}
\|A_\ve(\,\cdot\,,\sqrt{\ve})-\psi(\,\cdot\,,V_0(0) )\|_{L^2}\to 0\ \ \text{as}\ \ve\to 0.
\end{equation}
%
%
STEP 2 ({\it Resetting of \eqref{interf_prelim}-\eqref{interf_prelim_A} as a fixed point problem}). Consider an arbitrary $V\in L^\infty(\sqrt{\ve},T)$ and define $\mathcal{F}_{\ve}:L^{\infty}(\sqrt{\ve},T)\mapsto L^{\infty}(\sqrt{\ve},T)$ by 
\begin{equation}
\label{def_of_big_F}
\mathcal{F}_{\ve}(V):=\frac{1}{c_0+\ve \tilde{\mathcal{O}}_\ve}\left[ \int (\theta_0')^2 (A+\tilde{\eta}_\ve) dy-F(t)+\ve \mathcal{O}_\ve+\tilde o_\ve\right],
\end{equation}
where $A$ is the unique solution of 
\begin{empheq}[left=\empheqlbrace]{align}
\ve \partial_t A&=\partial^2_yA+V\partial_y A-A-\beta\theta_0^\prime,\label{def_of_big_f_pde_for_A}\\
A(y,\sqrt{\ve})&=\psi(y,V_0(0))\label{def_of_big_f_ic_for_A}
\end{empheq}
on $\mathbb{R}\times (\sqrt{\ve},T]$ and $\tilde{\eta}_\ve$ solves
\begin{empheq}[left=\empheqlbrace]{align}
\ve \partial_t \tilde{\eta}_\ve&=\partial^2_y\tilde{\eta}_\ve+V\partial_y \tilde{\eta}_\ve-\tilde{\eta}_\ve,\nonumber\\
\tilde{\eta}_\ve(y,\sqrt{\ve})&=A_\ve(y,\sqrt{\ve})-\psi(y,V_0(0)).\nonumber
\end{empheq}
Note that thanks to \eqref{bound_on_bl},
\begin{equation}
\label{unif_b_for_eta_tide_ve}
\max_{t\in[\sqrt{\ve},T]}\|\tilde{\eta}_\ve\|_{L^2}\to 0,\quad \text{as}\ \ve\to 0.
\end{equation}
%
%

It follows from the construction of $\mathcal{F}_\ve$ that $V_\ve$ is a fixed point of this mapping. Next we prove that, for sufficiently small $\beta$, $\mathcal{F}_\ve$ is a contraction mapping. Consider $V_1,V_2\in L^{\infty}(\sqrt{\ve},T)$ and let $A_1$, $A_2$ be solutions of \eqref{def_of_big_f_pde_for_A}-\eqref{def_of_big_f_ic_for_A} with $V=V_1$ and $V=V_2$, respectively. The function $\bar{A}:=A_1-A_2$ solves the following problem
\begin{empheq}[left=\empheqlbrace]{align}
\ve \partial_t \bar{A}&=\partial_y^2\bar{A}+V_1 \partial_y\bar{A}-\bar{A}+(V_1-V_2)\partial_y A_2,\label{contr_map_pde_for_bar_A}\\
\bar{A}(y,\sqrt{\ve})&=0.\label{contr_map_ic_for_bar_A}
\end{empheq}
Multiplying equation \eqref{contr_map_pde_for_bar_A} by $\bar{A}$ and integrating in $y$ we get 
\begin{equation}
\begin{aligned}
\frac{\ve}{2}\frac{\text{d}}{\text{d}t}\| \bar{A} \|^2_{L^2} +\| \bar{A} \|^2_{L^2} +\| \partial_y\bar{A} \|^2_{L^2} &=(V_1-V_2)\int  \bar{A}\partial_y A_2 dy
=(V_2-V_1)\int A_2 \partial_y \bar{A} dy
\\
&\leq |V_1-V_2|^2\frac{ \|A_2\|_{L^2}^2}{4}+ \|\partial_y \bar{A} \|_{L^2}^2.\label{contr_map_energy_eq_for_A_bar}
\end{aligned}
\end{equation}
On the other hand every solution solution $A$ of 
\eqref{def_of_big_f_pde_for_A}-\eqref{def_of_big_f_ic_for_A}, in particular $A_2$, satisfies  
\begin{equation}\label{contr_map_bound_on_A}
 \|A\|^2_{L^2} <c_0\beta^2, \;\;\;t\in [\sqrt{\ve},T].
\end{equation}
Indeed, multiplying  \eqref{def_of_big_f_pde_for_A} by $A$ and integrating in $y$ we get
\begin{equation*}
\ve\frac{d}{dt}\|A\|^2_{L^2}+2\|A\|^2_{L^2}+2\|\partial_y A\|^2_{L^2} =-2\beta \int \theta_0'A dy\leq c_0\beta^2+\|A\|^2_{L^2},
\end{equation*}
which yields $\ve\frac{d}{dt}\|A\|^2_{L^2}+\|A\|^2_{L^2}\leq c_0\beta^2$, the latter inequality in turn implies that 
$\|A\|^2_{L^2}\leq \|\psi\|^2_{L^2}e^{-t/\ve}+\beta^2c_0(1-e^{-t/\ve})$
for $t\in[\sqrt{\ve}, T]$. Observing that $\|\psi\|_{L^2}^2\leq\beta^2c_0$, we are led  to \eqref{contr_map_bound_on_A}.


Substitute now \eqref{contr_map_bound_on_A} in  \eqref{contr_map_energy_eq_for_A_bar} to conclude that
\begin{equation}
\|\mathcal{F}_{\ve}(V_1)-\mathcal{F}_\ve(V_2)\|^2_{L^{\infty}(\sqrt{\ve},T)} \leq \frac{c_0}{4}\beta^2 \|V_1-V_2\|^2_{L^{\infty}(\sqrt{\ve},T)}.
\end{equation}
Thus, for $\beta<2/\sqrt{c_0}$, $\mathcal{F}_{\ve}$ is a contraction mapping.

\medskip 

\noindent STEP 3.
Since  $V_\ve$ is a fixed point of the mapping $\mathcal{F}_{\ve}$, we have
\begin{equation*}
\begin{aligned}
	\|V_\ve -V_0\|_{L^\infty(\sqrt{\ve},T)}&= \|\mathcal{F}_{\ve}(V_{\ve})-\mathcal{F}_\ve (V_0)\|_{L^\infty(\sqrt{\ve},T)}
	+ \|\mathcal{F}_{\ve}(V_0)-V_0\|_{L^\infty(\sqrt{\ve},T)}
	\\
	&
	\leq \frac{\sqrt{c_0}}{2}\beta \|V_\ve -V_0\|_{L^\infty(\sqrt{\ve},T)}+ \|\mathcal{F}_{\ve}(V_0)-V_0\|_{L^\infty(\sqrt{\ve},T)}.
\end{aligned}
\end{equation*}
Thus, 
\begin{equation}
\label{contr_map_one_minus_q}
\|V_\ve -V_0\|_{L^\infty(\sqrt{\ve},T)}\leq \frac{1}{1-\sqrt{c_0}\beta/2} \|\mathcal{F}_{\ve}(V_0)-V_0\|_{L^\infty(\sqrt{\ve},T)}.
\end{equation}
It remains  to prove that  
\begin{equation}\label{contr_left_to_prove}
\text{$\|\mathcal{F}_{\ve}(V_0)-V_0\|_{L^\infty(\sqrt{\ve},T)}\to 0$ as $\ve\to 0$.}
\end{equation}

\medskip

\noindent STEP 4 ({\it Proof of \eqref{contr_left_to_prove}}).
First, we approximate $V_0(t)$, which can be a non-differentiable function, by a smooth function. Namely, construct $V_{0\ve}(t)\in C^1[0,T]$, e.g., as a mollification  of $V_0(t)$, such that 
\begin{equation}\label{def_V_ve_0}
\lim\limits_{\ve\to 0}V_{0\ve}= V_0\text{ in }C[0,T]\text{ and }\left|\frac{d}{dt}V_{0\ve}\right|<\frac{C}{\sqrt{\ve}}, \;\forall t\in [0,T].
\end{equation} 
%

Let $A$ be the solution of \eqref{def_of_big_f_pde_for_A}-\eqref{def_of_big_f_ic_for_A} with $V=V_0(t)$. Consider $D_\ve(y,t):=A(y,t)-\psi(y,V_{0\ve}(t))$, it 
satisfies the following equality 
\begin{equation}\label{Eq_for_D_ve}
\ve\partial_t D_\ve- \partial^2_y D_\ve- V_0 \partial_y D_\ve +D_\ve=-\ve  \frac{\partial \psi}{\partial V}(y;V_{0\ve}(t))\frac{d}{dt}V_{0\ve}+(V_0-V_{0\ve})\partial_y \psi(y,V_{0\ve}(t)) 
\end{equation}
on $\mathbb{R}\times (\sqrt{\ve},\infty)$.
Since the right hand side of \eqref{Eq_for_D_ve} converges to $0$ in $L^\infty([0,T], L^2(\mathbb{R}))$ and the norm of initial values $\|D_\ve(y,\sqrt{\ve})\|_{L^2}=
\|\psi(y,V_0(0))-\psi(y,V_{0\ve}(\sqrt{\ve}))\|_{L^2}\to 0$ as $\ve\to 0$, we have
\begin{equation}\label{bound_on_D_3}
\max\limits_{t\in[\sqrt{\ve},T]}\| D_\ve\|_{L^2}=0 \quad \text{when}\ \ve\to 0. 
\end{equation}
Finally, since 
$\int (\theta_0^\prime)^2\psi(y,V_{0\ve})dy=c_0V_0+F(t)+O(|V_{0\ve}-V_0|)$ we see that   
\begin{equation*}
|\mathcal{F}_{\ve}(V_0)-V_0|\leq
C(|V_{0\ve}-V_0|+\|D_\ve\|_{L^2}+\|\tilde \eta_\ve\|_{L^2}+|\tilde o_\ve|+\ve) 
\end{equation*}
Then combining  \eqref{unif_b_for_eta_tide_ve},\eqref{def_V_ve_0} and \eqref{bound_on_D_3} we establish \eqref{contr_left_to_prove}, and the Theorem is proved.  
$\square$

%
%

\subsection{Sharp Interface Limit  for arbitrary $\beta$ via stability analysis}
\label{arbitrary_beta}

\subsubsection{Reduction to a stability problem}

For larger $\beta$ the contraction principle no longer applies and both analysis and the results become  more complex. 
Here the stability analysis of the semigroup generated by a non-local non self-adjoint operator is used in place of the contraction mapping principle.

In the case where $\beta$ is not small, solutions of \eqref{sharp_interface} are no longer unique, see Fig. \ref{fig:Phi}. However, the original PDE problem  \eqref{P1}-\eqref{P2} (as well as the reduced system \eqref{interf_prelim}-\eqref{interf_prelim_A} has the unique solution. This indicates that analysis for large $\beta$ must be complemented by  a criterion of how to select the limiting solution of equation \eqref{sharp_interface} among all solutions of this equation.

\begin{figure}[]
	\begin{center}
		\includegraphics[width=0.45 \textwidth]{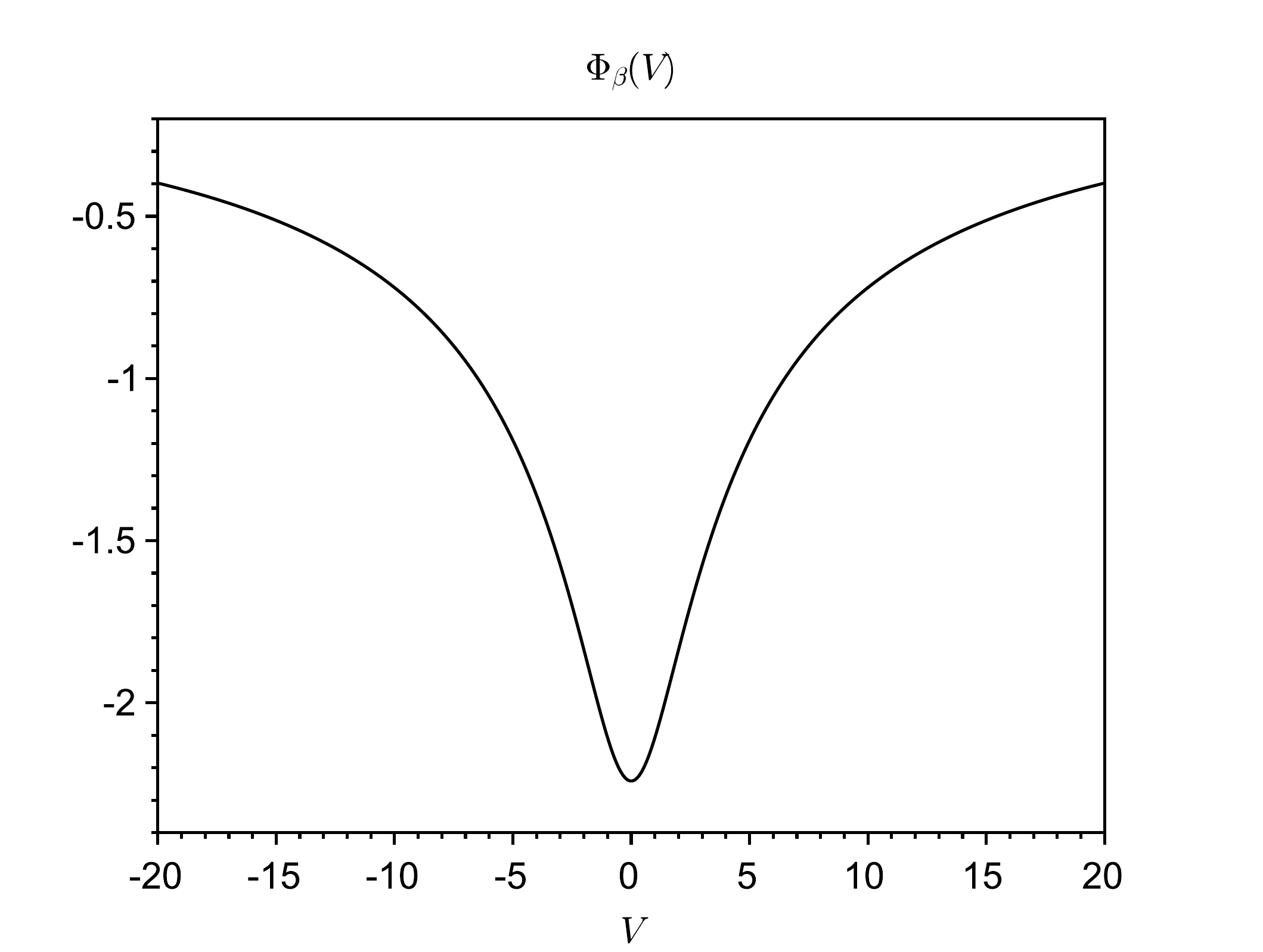}
		\includegraphics[width=0.45 \textwidth]{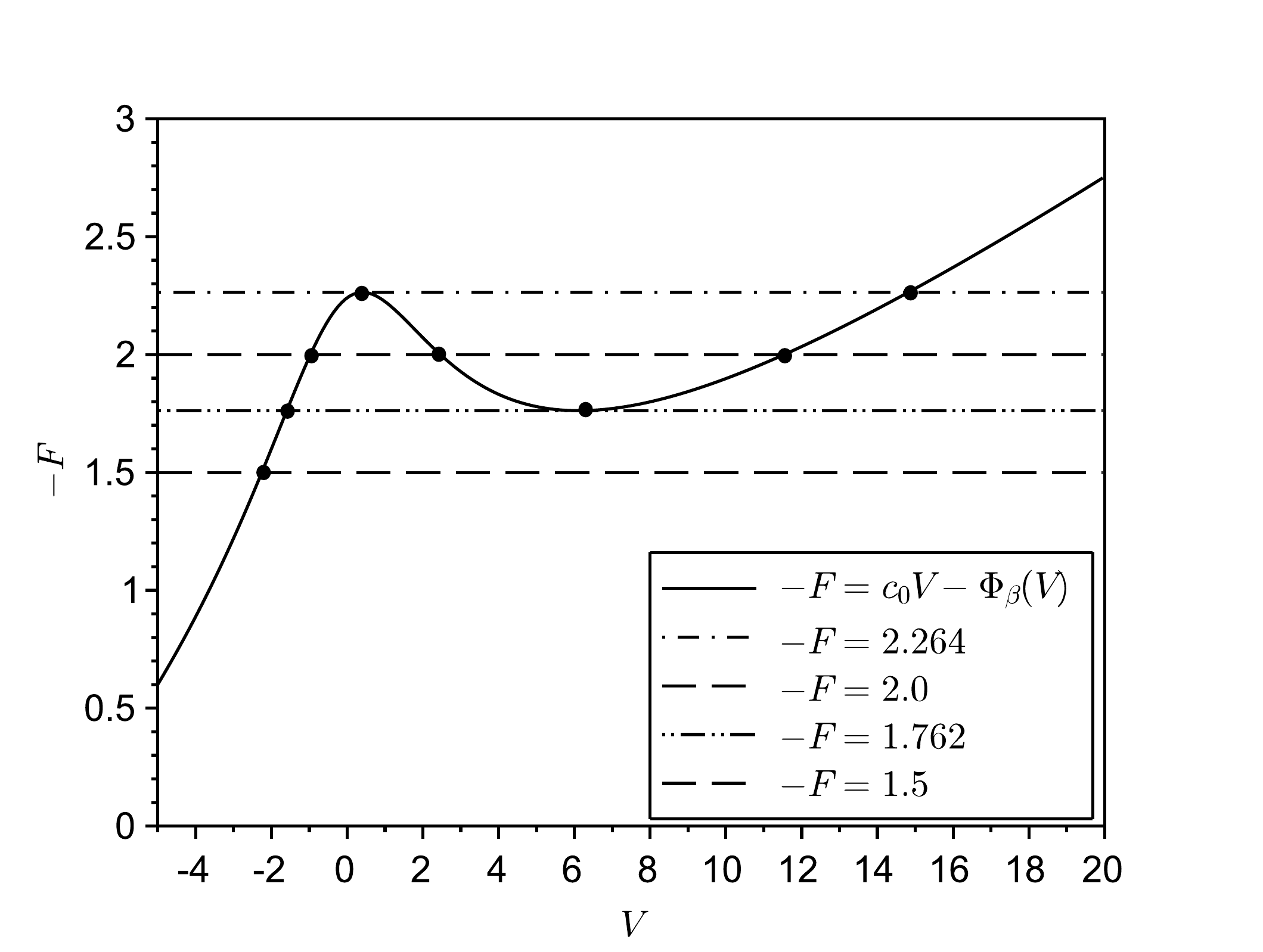}
		\caption{{\it Left}: Plot of function $\Phi_\beta(V)$ for $\beta =150>\beta_{\text{cr}}$; {\it Right}: Plot $c_0V- \Phi_\beta(V)$ for $\beta=150$ vs $F$. For $-F=1.5$ there is one intersection (\eqref{sharp_interface} has one root). For each $-F=1.762$ and $-F=2.264$ there are two intersections (\eqref{sharp_interface} has two roots). For $-F=2$ there are three intersections (\eqref{sharp_interface} has three roots).}
		\label{fig:Phi}
	\end{center}
\end{figure}


As a first step, 
we neglect terms $\ve\tilde{\mathcal{O}}_{\ve}(t)$, $\ve\mathcal{O}_{\ve}(t)$ and $\tilde o_\ve(t)$ in the reduced system \eqref{interf_prelim}-\eqref{interf_prelim_A} and study the system
\begin{empheq}[left=\empheqlbrace]{align}
c_0V_{\ve}(t)&=\int (\theta_0'(y))^2f_{\ve}(y,t) dy -F(t),\label{eq_for_V_ve_sys}\\
\ve \partial_t f_{\ve}&=\partial_y^2 f_{\ve}+V_{\ve}(t)\partial_y f_{\ve}-f_{\ve}-\beta\theta_0'
\label{eq_for_f_ve_sys}
\end{empheq}
(in \eqref{eq_for_V_ve_sys}-\eqref{eq_for_f_ve_sys}, $f_\ve$ replaces $A_\ve$ from \eqref{interf_prelim}-\eqref{interf_prelim_A}).
Substitute \eqref{eq_for_V_ve_sys} into \eqref{eq_for_f_ve_sys} to rewrite the \eqref{eq_for_V_ve_sys}-\eqref{eq_for_f_ve_sys} as a single equation
\begin{equation}\label{eq_for_f_ve}
\ve \partial_t f_{\ve}=\partial_y^2 f_{\ve}+\frac{1}{c_0}\left(\int (\theta_0')^2f_{\ve} dy-F(t)\right) \partial_yf_{\ve}-f_{\ve}-\beta\theta_0'.
\end{equation}
In the limit $\ve\to 0$ this equation (formally)  leads to the  PDE  
\begin{equation}\label{eq_for_f_0}
0=\partial_y^2 f_{\ve}+\frac{1}{c_0}\left(\int (\theta_0')^2f_{\ve} dy-F(t)\right) \partial_yf_{\ve}-f_{\ve}-\beta\theta_0'.
\end{equation}
Taking the formal limit is justified below for passing from \eqref{eq_for_f_ve} to \eqref{eq_for_f_0}. 

\begin{remark}
	\label{RiemarkAboutODES}
Equation \eqref{eq_for_f_ve} is a singular perturbation of \eqref{eq_for_f_0} and both equations are non-autonomous. It is well-known that singular limit problems, including non-autonomous equations, can be reduced to the analysis of large time behavior of autonomous equations. To illustrate this, recall a standard example of an ODE with a small parameter $\ve$ from \cite{Hol13},
\begin{equation}\label{ODE_from_Tik}
\ve \frac{dz_{\ve}}{dt}=\mathcal{F}(z_{\ve},t), \;t\in[0,T].
\end{equation}
Assume that there exists the unique root $\phi(t)$ of $\mathcal{F}$, i.e., $0=\mathcal{F}(\phi(t),t),\;t\in[0,T]$.
Then the singular limit
$\phi(t)=\lim\limits_{\ve\to 0}z_{\ve}(t)$ holds
provided that $\phi(t)$ is a stable root, i.e., all solutions $u(\tau)$ of an autonomous problem $\frac{du(\tau)}{d\tau}=\mathcal{F}(u(\tau),t)$ ($t$ is fixed) converge to the large time limit $\phi(t)$:
$
\lim\limits_{\tau\to \infty}u(\tau)=\phi(t).$
Note that the problem \eqref{ODE_from_Tik} has two time scales: a slow time $t$ and a fast time $\tau$. Also the large-time limit corresponds to $\tau \to \infty$ for a fixed parameter $t$.   

Note that the equivalence of singular and large-time limits is straightforward for the singularly perturbed  autonomous problems ($\mathcal{F}$ does not depend on $t$ in \eqref{ODE_from_Tik}). In this case, the simple rescaling \begin{equation*}\text{$\tau:=t/\ve$ $\;\;\;\;\;$ $u(\tau):=z_{\ve}(\ve \tau)$}\end{equation*} reduces the singular limit problem 
to a problem of stability of steady state.
\end{remark}

To justify the transition  from \eqref{eq_for_f_ve} to  \eqref{eq_for_f_0} we introduce three time scales: slow, fast, and intermediate.
More precisely, we employ the following three step procedure: 
(I) partition the interval $[0,T]$ by segments of length $\sqrt{\ve}$ on which the equation \eqref{eq_for_f_ve} is ``almost" autonomous ($F(t)$ is ``almost" constant  on each of these intervals); 
(II) on the first interval $(0,\sqrt{\ve})$, by appropriate scaling $\tau=t/\ve$ and stability analysis find large-time  asymptotics $\tau\to\infty$
(here we used equivalence of singular and large-time limits for autonomous equations);
(III) use the  asymptotics found in (II) as initial conditions for the next interval $(\sqrt{\ve},2\sqrt{\ve})$, repeat step (II) on this interval, and continue to obtain global asymptotics on $[0,T]$.       
A crucial ingredient here is an exponential stability of the linearized problem which 
prevents accumulating of errors (see bound\eqref{stab_eq_3} in Lemma \ref{l_stab}).   

\subsubsection{Spectral analysis of the linearized operator}


Rescale the ``fast" time $\tau=t/\ve $ in the unknown $f_\ve$ in \eqref{eq_for_f_ve}  and ``freeze" time $t$ in $F(t)$ (as described in step (II) above)
\begin{equation} \label{eq_for_f}
\partial_{\tau} f=\partial_y^2f+\frac{1}{c_0}\left(\int (\theta_0'(y))^2f(y,\tau) dy-F(t)\right)\partial_y f-f-\beta\theta_0',
\end{equation}
here $t\in[0,T]$ is considered to be a fixed parameter. Steady states of 
\eqref{eq_for_f} are solutions of \eqref{eq_for_f_0}. Let $f_0$ be such a 
solution, we define its velocity by 
\begin{equation}\label{velocity_Vs}
V_0:=\frac{1}{c_0}\left(\int (\theta_0'(y))^2f_0 dy-F(t)\right),
\end{equation}
then $f_0(y)=\psi(y,V_0)$, where $\psi(y;V)$ is defined in \eqref{psi_in_formal}.
Linearizing equation
\eqref{eq_for_f} around $f_{0}$ we obtain
\begin{equation}\label{eq_for_f_linearized}
\partial_\tau f +\mathcal{T}(V_0) f=0,
\end{equation}
where $\mathcal{T}(V):L^2(\mathbb R)\mapsto L^2(\mathbb R)$ is a
linear operator parameterized by $V\in \mathbb{R}$ and 
given by
\begin{equation}
\label{def_of_T}
\mathcal{T}(V) f:= -\partial_y^2 f -V \partial_y f + f -\frac{1}{c_0}\left(\int (\theta_0')^2 f dy \right)\partial_y\psi(y,V).
\end{equation}


Operator $\mathcal{T}(V)$ is a perturbation of a local operator $\mathcal{A}(V) f:= -\partial_y^2 f -V \partial_y f + f $ by a non-local rank one operator $\mathcal{P}(V)f= -\partial_y\psi(y,V)\frac{1}{c_0}(f,(\theta_0^\prime)^2)_{L^2}$, where 
$(\,\cdot\,,\,\cdot\,)_{L^2}$ stands for the standard inner product in $L^(\mathbb{R})$. 
The spectrum $\sigma(\mathcal{A}(V))$ of operator $\mathcal{A}(V)$ is described by the following straightforward proposition.
\begin{proposition}
	The spectrum $\sigma(\mathcal{A}(V))$ consists only of its essential part:
	\begin{equation*}
	\sigma(\mathcal{A}(V))=\sigma_{\text{ess}}(\mathcal{A}(V))=\left\{k^2-iVk +1;\, k\in \mathbb R\right\}.
	\end{equation*}
\end{proposition}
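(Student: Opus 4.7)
The plan is to use the Fourier transform to diagonalize the constant-coefficient operator $\mathcal{A}(V)$ and then read off its spectrum from the symbol. Specifically, since $\mathcal{F}[\partial_y f](k) = ik \hat f(k)$, for every $f$ in the natural domain $D(\mathcal{A}(V)) = H^2(\mathbb{R})$ we have
\begin{equation*}
\widehat{\mathcal{A}(V) f}(k) = (k^2 - iVk + 1)\hat f(k) =: m(k)\hat f(k),
\end{equation*}
so $\mathcal{A}(V)$ is unitarily equivalent (via $\mathcal{F}$) to the multiplication operator $M_m$ on $L^2(\mathbb{R})$ with symbol $m(k) = k^2 - iVk + 1$.

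Next I would invoke the standard fact that the spectrum of a multiplication operator $M_m$ on $L^2(\mathbb{R})$ by a measurable function $m$ coincides with the essential range of $m$. Since $m$ is continuous, its essential range equals $\overline{m(\mathbb{R})}$, and the curve $\{k^2 - iVk + 1 : k \in \mathbb{R}\}$ is already closed in $\mathbb{C}$ (it is a properly embedded parabola, continuous and proper, so the image is closed). This identifies $\sigma(\mathcal{A}(V))$ with the claimed parabola.

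Finally, I would verify that every point of the spectrum is essential, i.e.\ there are no eigenvalues. If $\lambda \in \sigma(\mathcal{A}(V))$ and $(\mathcal{A}(V) - \lambda)f = 0$ for some $f \in H^2(\mathbb{R})$, then in Fourier variables $(m(k) - \lambda)\hat f(k) = 0$ almost everywhere. Since $m(k) - \lambda$ is a nonzero polynomial in $k$ (with complex coefficients), its zero set is finite, hence of Lebesgue measure zero, forcing $\hat f = 0$ a.e.\ and so $f \equiv 0$. Thus no eigenvalues exist, $\sigma_{\mathrm{pt}}(\mathcal{A}(V)) = \emptyset$, and the entire spectrum is essential, giving $\sigma(\mathcal{A}(V)) = \sigma_{\mathrm{ess}}(\mathcal{A}(V)) = \{k^2 - iVk + 1 : k \in \mathbb{R}\}$. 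No step presents a serious obstacle here; the only mild care needed is in justifying the unitary equivalence on the precise operator domain $H^2(\mathbb{R})$ and in citing the appropriate definition of essential spectrum (for non-self-adjoint operators the various definitions coincide for multiplication operators on Lebesgue spaces, so the statement is unambiguous).
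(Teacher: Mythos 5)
Your proof is correct, and it is exactly the standard Fourier-multiplier argument that the paper implicitly relies on (the paper labels the proposition ``straightforward'' and gives no proof). The key points --- unitary equivalence to multiplication by the symbol $m(k)=k^2-iVk+1$ on its natural domain, closedness of the parabola $m(\mathbb{R})$, absence of eigenvalues because the zero set of $m(k)-\lambda$ is finite, and the coincidence of the various notions of essential spectrum for a (normal) multiplication operator --- are all handled properly.
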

The spectrum $\sigma(\mathcal{T}(V))$ of  $\mathcal{T}(V)$ is described in
\begin{theorem}\label{spectral_theorem} (On spectrum of the linearized operator)
	Consider the part $\sigma_p(\mathcal{T}(V))$ of the spectrum $\sigma(\mathcal{T}(V))$ laying in $\mathbb C\backslash \sigma_{\text{ess}}(\mathcal{A})$. Then  $\sigma_p(\mathcal{T}(V))$ is given by 
	\begin{equation*}
	\sigma_p(\mathcal{T}(V))=\left\{\lambda\in \mathbb C\backslash \sigma_{\text{ess}}(\mathcal{A});\, \left(\,(\mathcal{A}(V)-\lambda)^{-1}\partial_y \psi,(\theta_0^\prime)^2\right)_{L^2}=c_0\right\}.
	\end{equation*}
	Moreover, all $\lambda$ from $\sigma_{\text{p}}(\mathcal{T}(V))$ are  eigenvalues with finite algebraic multiplicities, and geometric multiplicity one. 	
\end{theorem}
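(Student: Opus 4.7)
The plan is to exploit the rank-one structure of the non-local part and reduce the eigenvalue problem for $\mathcal{T}(V)$ to an explicit scalar equation, then invoke analytic Fredholm theory for the multiplicity claims.

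First I would decompose $\mathcal{T}(V)=\mathcal{A}(V)+\mathcal{P}(V)$, where
\[
\mathcal{P}(V)f=-\frac{1}{c_0}\bigl(f,(\theta_0')^2\bigr)_{L^2}\,\partial_y\psi(\,\cdot\,,V)
\]
is a rank-one (hence compact) operator. The constant coefficient operator $\mathcal{A}(V)$ can be diagonalized by the Fourier transform; its symbol is $k^2-iVk+1$, so the spectrum equals the essential spectrum given in the preceding proposition and there are no point spectrum. In particular, for every $\lambda\in\mathbb{C}\setminus\sigma_{\text{ess}}(\mathcal{A}(V))$ the operator $\mathcal{A}(V)-\lambda$ is boundedly invertible on $L^2(\mathbb R)$; denote $R_\lambda:=(\mathcal{A}(V)-\lambda)^{-1}$.

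Next I would factor
\[
\mathcal{T}(V)-\lambda=(\mathcal{A}(V)-\lambda)\bigl(I+R_\lambda\mathcal{P}(V)\bigr),
\]
so that $\lambda\in\sigma_p(\mathcal{T}(V))$ iff $I+R_\lambda\mathcal{P}(V)$ fails to be invertible. Since $R_\lambda\mathcal{P}(V)f=-\tfrac{1}{c_0}(f,(\theta_0')^2)_{L^2}\,R_\lambda\partial_y\psi$ is a rank-one operator of the form $f\mapsto \ell(f)g$ with $g=R_\lambda\partial_y\psi$ and $\ell(f)=-\tfrac{1}{c_0}(f,(\theta_0')^2)_{L^2}$, the standard Sherman--Morrison criterion gives that $I+R_\lambda\mathcal{P}(V)$ is non-invertible precisely when $1+\ell(g)=0$, i.e. when
\[
\bigl(R_\lambda\partial_y\psi,(\theta_0')^2\bigr)_{L^2}=c_0,
\]
which is exactly the characterization claimed. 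Moreover, when this condition holds, any element of $\ker(I+R_\lambda\mathcal{P}(V))$ must be a scalar multiple of $g=R_\lambda\partial_y\psi$ (plugging $f=\ell(f)(-g)$ back in determines $f$ up to scalar, with $\ell(f)\neq 0$ because $\mathcal{A}(V)$ has no point spectrum). Since $\mathcal{A}(V)-\lambda$ is invertible, this transfers to $\ker(\mathcal{T}(V)-\lambda)$, yielding geometric multiplicity one.

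For the finite algebraic multiplicity I would invoke the analytic Fredholm theorem applied to the operator-valued function $\lambda\mapsto I+R_\lambda\mathcal{P}(V)$, which is analytic and a compact perturbation of the identity on the connected component(s) of $\mathbb C\setminus\sigma_{\text{ess}}(\mathcal{A}(V))$. Because the scalar function $\lambda\mapsto c_0-(R_\lambda\partial_y\psi,(\theta_0')^2)_{L^2}$ is not identically zero (it tends to $c_0$ as $|\lambda|\to\infty$), its zero set is discrete and each zero has finite order, so $(\mathcal{T}(V)-\lambda)^{-1}$ is meromorphic with poles of finite order, i.e. each eigenvalue has finite algebraic multiplicity.

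The main technical point to verify carefully is the Fredholm/invertibility picture for $\mathcal{A}(V)-\lambda$ off the essential spectrum, and the non-triviality of the scalar determinant function so that the analytic Fredholm alternative actually yields discrete point spectrum of finite multiplicity rather than $\sigma_p=\mathbb C\setminus\sigma_{\text{ess}}$. The former follows from constant-coefficient Fourier analysis; the latter follows from the decay at infinity of the determinant function via an estimate on $R_\lambda\partial_y\psi$ as $|\lambda|\to\infty$.
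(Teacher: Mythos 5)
Your proposal is correct and follows essentially the same route as the paper: decompose $\mathcal{T}(V)$ into the Fourier-diagonalizable part $\mathcal{A}(V)$ plus the rank-one perturbation, apply $(\mathcal{A}(V)-\lambda)^{-1}$, and eliminate the scalar $(f,(\theta_0')^2)_{L^2}$ to obtain the condition $((\mathcal{A}(V)-\lambda)^{-1}\partial_y\psi,(\theta_0')^2)_{L^2}=c_0$, with the eigenfunction $(\mathcal{A}(V)-\lambda)^{-1}\partial_y\psi$ giving geometric multiplicity one. Your explicit appeal to the analytic Fredholm alternative (checking the scalar determinant is not identically zero via decay of the resolvent in each component of $\mathbb C\setminus\sigma_{\text{ess}}(\mathcal{A})$) is in fact slightly more careful about the \emph{algebraic} multiplicity than the paper's bare citation of Fredholm's theorem.
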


\begin{proof} We suppress dependence of $\mathcal{A},\mathcal{T}$, and $\eta$ on $V$  for brevity. Consider  $\lambda \notin \sigma_{\text{ess}}(\mathcal{A}) \cup\sigma_p(\mathcal{T})$ and $g\in L^2(\mathbb R)$. There exists the solution $f$ of 
\begin{equation*}
(\mathcal{A}-\lambda)f-\frac{1}{c_0}\partial_y \psi 
(f, (\theta_0^\prime )^2)_{L^2} = g,\ \ \text{or}\ 
f=\frac{1}{c_0}(\mathcal{A}-\lambda)^{-1} \partial_y\psi (f, (\theta_0^\prime )^2)_{L^2}+(\mathcal{A}-\lambda)^{-1}g
\end{equation*}
which can be represented as
\begin{equation}
\label{Resolvent_applied}
f=\frac{1}{c_0}(\mathcal{A}-\lambda)^{-1} \partial_y\psi (f, (\theta_0^\prime )^2)_{L^2}+(\mathcal{A}-\lambda)^{-1}g.
\end{equation}
Eliminate $(f, (\theta_0^\prime )^2)_{L^2}$ from the latter equation to find that 
\begin{equation*}
f=(\mathcal{A}-\lambda)^{-1}\partial_y\psi
\frac{\left((\mathcal{A}-\lambda)^{-1}g,(\theta_0^\prime )^2 \right)_{L^2}}{c_0-\left((\mathcal{A}-\lambda)^{-1} \partial_y\psi,(\theta_0^\prime )^2 \right)_{L^2}}
+(\mathcal{A}-\lambda)^{-1}g.
\end{equation*}
Thus, if $\lambda\notin\left\{\lambda\in \mathbb{C}; ((\mathcal{A}-\lambda)^{-1}\partial_y\psi,(\theta_0^\prime )^2)_{L^2}= c_0 \right\}\cup \sigma_{\text{ess}}(\mathcal{A})$, then 
$\lambda$ belongs to the resolvent set of $\mathcal{T}$. 

Now suppose that $\lambda\in \sigma_p(\mathcal{T}(V))$. Then 
$((\mathcal{A}-\lambda)^{-1}\partial_y\psi,(\theta_0^\prime )^2)_{L^2}= c_0$ and by Fredholm's theorem applied to  \eqref{Resolvent_applied},
$\lambda$ is an eigenvalue of finite multiplicity. Let $f$ be a corresponding eigenfunction, then by \eqref{Resolvent_applied}
\begin{equation*}
f=\frac{1}{c_0}(\mathcal{A}-\lambda)^{-1} \partial_y\psi (f, (\theta_0^\prime )^2)_{L^2}
\end{equation*}
Take the scalar product of this equality with $(\theta_0^\prime)^2$ to conclude that $\lambda \in \sigma_p(\mathcal{T})$ if and only if
\begin{equation}\label{point_spectrum}
\left((\mathcal{A}-\lambda)^{-1}\partial_y\psi,(\theta_0')^2\right)_{L^2}
=c_0, 
\end{equation}
and $(\mathcal{A}-\lambda)^{-1} \partial_y\psi$ is the unique (up to multiplication by a constant) eigenfunction. 
\end{proof}
{$\square$}

Thus, Theorem \ref{spectral_theorem} reduces the study of the part of the spectrum $\sigma_p(\mathcal{T})=\sigma(\mathcal{T})\setminus \sigma_{\text{ess}}(\mathcal{A})$ of operator $\mathcal{T}(V)$ to the equation \eqref{point_spectrum}. Next, using the obtained characterization of the $\sigma_p(\mathcal{T})$ we study the stability of  $\mathcal{T}$. 

\begin{proposition}\label{prop_unstable}
	If $\Phi^\prime_\beta(V)\geq c_0$, then there exists a real non
	positive eigenvalue $\lambda\in \sigma_p(\mathcal{T}(V))$. 
\end{proposition}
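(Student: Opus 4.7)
The plan is to reduce the claim to an intermediate value theorem argument for the single scalar function
$$g(\lambda) := \bigl((\mathcal{A}(V) - \lambda)^{-1}\partial_y\psi, (\theta_0')^2\bigr)_{L^2}$$
which, by Theorem \ref{spectral_theorem}, detects eigenvalues of $\mathcal{T}(V)$ via $g(\lambda) = c_0$. I would look for such a $\lambda$ on the half-line $(-\infty,0]$. The two key pieces of information I need are the value of $g$ at the endpoint $\lambda=0$ and the behavior of $g$ at $-\infty$.

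For the endpoint, the plan is to show $g(0) = \Phi'_\beta(V)$. The identity \eqref{psi_in_formal} rewrites as $\mathcal{A}(V)\psi = -\beta\theta_0'$; differentiating in $V$ (legitimate because $\psi(\cdot,V)$ is the unique bounded solution of a linear ODE depending smoothly on the parameter $V$) and using $\partial_V\mathcal{A}(V) = -\partial_y$ yields $\mathcal{A}(V)\psi_V = \partial_y\psi$, hence $(\mathcal{A}(V))^{-1}\partial_y\psi = \psi_V$. Combined with the defining integral $\Phi_\beta(V)=\int\psi(\theta_0')^2\,dy$ from \eqref{def_for_motion1}, this gives $g(0) = (\psi_V,(\theta_0')^2)_{L^2} = \Phi'_\beta(V) \geq c_0$ by hypothesis.

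Next I would verify that $g$ is well-defined, real-valued and continuous on all of $(-\infty,0]$. The essential spectrum $\{k^2 + 1 - iVk : k\in\mathbb{R}\}$ satisfies $\operatorname{Re}\lambda \geq 1$, so it is disjoint from $(-\infty,0]$; the resolvent $(\mathcal{A}(V)-\lambda)^{-1}$ is bounded and depends analytically on $\lambda$ there, and since the coefficients of $\mathcal{A}(V)$ and the source $\partial_y\psi$ are real, $g$ is real on the real line. For the behavior as $\lambda\to -\infty$, the Fourier representation gives $\|(\mathcal{A}(V)-\lambda)^{-1}\|_{L^2\to L^2} \leq 1/(1-\lambda)$, hence
$$|g(\lambda)| \leq \frac{\|\partial_y\psi\|_{L^2}\|(\theta_0')^2\|_{L^2}}{1-\lambda} \xrightarrow[\lambda\to-\infty]{} 0.$$

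With these facts in hand, the conclusion follows: $g$ is continuous on $(-\infty,0]$, $g(0)\geq c_0$ and $g(\lambda)\to 0 < c_0$ as $\lambda\to-\infty$, so by the intermediate value theorem there exists $\lambda_0\in(-\infty,0]$ with $g(\lambda_0)=c_0$, and by Theorem \ref{spectral_theorem} this $\lambda_0$ is an eigenvalue in $\sigma_p(\mathcal{T}(V))$. The whole argument is short; the only nontrivial point is the identification $g(0)=\Phi'_\beta(V)$, which hinges on recognizing $\partial_y\psi$ as $\mathcal{A}(V)\psi_V$ — that is the conceptual heart of the proof, while everything else is routine spectral bookkeeping.
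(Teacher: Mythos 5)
Your proposal is correct and follows essentially the same route as the paper: the paper's proof also considers $\zeta(\lambda)=\bigl((\mathcal{A}(V)-\lambda)^{-1}\partial_y\psi,(\theta_0')^2\bigr)_{L^2}$ on $(-\infty,0]$, identifies $\zeta(0)=\Phi'_\beta(V)$ by differentiating \eqref{psi_in_formal} in $V$, notes $\zeta(\lambda)\to 0$ as $\lambda\to-\infty$, and concludes via the intermediate value theorem and Theorem \ref{spectral_theorem}. You merely supply more detail (reality and continuity of $\zeta$, the explicit resolvent bound) than the paper does.
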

\begin{proof}
Consider the function $\zeta(\lambda):=
\left((\mathcal{A}(V)-\lambda)^{-1}\partial_y\psi,(\theta_0')^2\right)_{L^2}$ for real $\lambda\in (-\infty,0]$. We claim that $\zeta(0)=\Phi^\prime(V)$. Indeed, differentiate  \eqref{psi_in_formal}
to find that 
\begin{equation*}
-\partial_y^2\psi_{V}-V\partial_y\psi_{V}+\psi_V =\partial_y \psi,
\end{equation*}
where $\psi_{V}$ denotes the partial derivative of $\psi$ in $V$. Thus
$$
\zeta(0)=\left([\mathcal{A}(V)]^{-1}\partial_y\psi,(\theta_0')^2\right)_{L^2}
=\left(\psi_V,(\theta_0')^2\right)_{L^2}=\Phi^\prime_\beta(V).
$$
On the other hand it is easy to see that $\zeta(\lambda)\to 0$ as 
$\lambda\to-\infty$, consequently $\zeta(\lambda)=c_0$ for 
some $\lambda\in (-\infty,0]$. By Theorem  \ref{spectral_theorem}
this $\lambda$ is a non positive eigenvalue of $\mathcal{T}(V)$.
\end{proof}{$\square$}

\medskip 

\begin{definition} Define the {\it set of stable velocities} $\mathcal{S}$ by
	\begin{equation}\label{def_of_sd}
	\mathcal{S}:=\left\{V\in \mathbb R: \forall \lambda \in\sigma (\mathcal{T}(V))\ \text{has positive real part}\,
		\right\},
	\end{equation}
	where $\mathcal{T}(V)$ is the linearized operator given by \eqref{def_of_T}.
\end{definition}

\begin{remark}
	\label{CircularShapeUnstable}
In the case of 2D sytem \eqref{eq1}-\eqref{lagrange} one can expect (yet to be proved) that
there exist standing wave solutions with circular symmetry when $\Omega$ is a disk.  However our preliminary reasonings show that these solutions are not stable if $\Phi_\beta^\prime(0)>c_0$ 
(this latter inequality holds for asymmetric potentials $W(\rho)$ and sufficiently large $\beta$).
This conjecture originates from the fact that zero velocity and its small perturbations does not belong to the set of stable velocities as shown in Proposition \ref{prop_unstable}.  	
\end{remark}

Proposition \ref{prop_unstable} implies that the inequality 
\begin{equation}\label{T_is_stable}
\Phi'_\beta(V)< c_0
\end{equation}
is a necessary condition for stability of $V$. We hypothesize that \eqref{T_is_stable} is also a sufficient condition, and therefore \eqref{T_is_stable} describes the set $\mathcal{S}$, that is, 
\begin{equation}\label{S_is_set}
\mathcal{S}=\left\{V\in\mathbb R: \;\Phi'_\beta(V)< c_0\right\}.
\end{equation}

To support our hypothesis we consider $W(\rho)=\frac14\rho^2(\rho-1)^2$. In this case, the set $\left\{V\in\mathbb R: \;\Phi'_\beta(V)< c_0\right\}$ is the complement to the open interval $(V_{\text{min}}, V_{\text{max}})$, where $V_{\text{min}}$ and $V_{\text{max}}$ are the local maximum and minimum, respectively (see  Fig. \ref{fig:Phi} and the sketch of $c_0V-\Phi_\beta(V)$ in Fig. \ref{fig:two_branches}). Numerical simulations clearly show that \eqref{S_is_set} holds. We can also rigorously prove that there exist such $V_1$ and $V_2$ that the set of stable velocities $\mathcal{S}$ is non-empty and, moreover, contains the compliment to the open interval $(V_1,V_2)$. This is done by means of Fourier analysis which allows us to rewrite \eqref{point_spectrum} as an integral equation for a complex number $\lambda$. 
Details are relegated to 
\ref{appendix_integral_equation}.


\subsubsection{Main result for 1D interface limit}

In this subsection we formulate the main result on the 1D sharp interface limit in the  system \eqref{P1}-\eqref{P2}  for arbitrary $\beta>0$.
 Introduce the following conditions:
\begin{list}{}{}
	
	\item{\bf (C1)} 
	Let $V_0 \in \mathcal{S}$ solve $c_0V_0- \Phi_\beta(V_0)=F(0)$  and let  $[0,T_\star]$ be a time interval such that there exists $V(t)\in \mathcal{S}$  a continuous  solution of \begin{equation}\label{117}
	c_0V(t)-\Phi_\beta(V(t))=-F(t),\;\;t\in[0,T_\star],\; V(0)=V_0.
	\end{equation}

	\item{\bf (C2)} 
	Assume that $P_{\ve}(x, 0)=p_{\ve}(y/\ve)$ and 
	$\|p_\ve(\, \cdot\, )-\psi(\,\cdot\, ,V_0)\|_{L^2}< \delta$ with a small constant $\delta>0$ independent of $\ve$ (the function $\psi=\psi(y;V_0)$ is defined by \eqref{psi_in_formal}).
	
\end{list}

\medskip
\begin{theorem}\label{theorem:sharp_interface}
	{(\it Sharp Interface Limit for all $\beta$)} Let $x_{\ve}$ be as in Theorem \ref{ansatz_existence} and assume that conditions ({\bf C1}) and ({\bf C2}) hold along with the conditions of Theorem \ref{ansatz_existence}. Then $x_{\ve}(t)$ converges to $x_0(t)$ in $C^1[0,T_\star]$, where  $V(t):= \dot{x}_{0}(t)$  is the solution of  \eqref{117} as  defined in
	{\bf (C1)}.
\end{theorem}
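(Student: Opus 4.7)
The starting point is to combine Theorem \ref{ansatz_existence} with the reduction performed in Section \ref{section:reduction}: the dynamics are encoded in the reduced system \eqref{interf_prelim}--\eqref{interf_prelim_A} for the interface velocity $V_\ve$ and the rescaled orientation $A_\ve$, with perturbative terms $\ve\tilde{\mathcal{O}}_\ve$, $\ve\mathcal{O}_\ve$, $\tilde o_\ve$ that vanish (or are $O(\ve)$) as $\ve\to 0$. Substituting the algebraic relation for $V_\ve$ into the PDE for $A_\ve$ reduces the problem, modulo small errors, to the singularly perturbed nonlocal equation \eqref{eq_for_f_ve}, with initial data $f_\ve(\,\cdot\,,0)=p_\ve$ which, by $(\mathbf{C2})$, is already close in $L^2$ to the stable equilibrium $\psi(\,\cdot\,,V_0)$ of the corresponding $t=0$ ``frozen'' problem. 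Condition $(\mathbf{C1})$ gives a continuous curve $V(t)\in\mathcal{S}$ of stable equilibria along which the slow manifold $\psi(\,\cdot\,,V(t))$ is defined.

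I would then implement the three-time-scale strategy described in the paragraph following Remark \ref{RiemarkAboutODES}. Partition $[0,T_\star]$ into intervals $I_k=[k\sqrt{\ve},(k+1)\sqrt{\ve}]$; on each $I_k$ freeze $F(t)\approx F(t_k)$ up to an error $O(\sqrt{\ve})$ (using continuity/modulus of continuity of $F$), rescale the fast time $\tau=(t-t_k)/\ve$, and treat \eqref{eq_for_f_ve} as the autonomous equation \eqref{eq_for_f} with parameter $t=t_k$. Its steady state is $\psi(\,\cdot\,,V(t_k))$, and since $V(t_k)\in\mathcal{S}$ the linearization is $\mathcal{T}(V(t_k))$, whose spectrum lies in the open right half-plane by \eqref{def_of_sd}. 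Writing $f_\ve=\psi(\,\cdot\,,V(t))+g_\ve$ and using Theorem \ref{spectral_theorem} together with the presumed exponential stability bound (the referenced Lemma \ref{l_stab}), I would show that the semigroup generated by $\mathcal{T}(V(t_k))$ satisfies $\|e^{-\tau\mathcal{T}(V(t_k))}\|\leq Ce^{-\mu\tau}$ with $\mu>0$ uniform over a compact subset of $\mathcal{S}$ containing the range of $V(\,\cdot\,)$. The equation for $g_\ve$ then reads
\begin{equation*}
\ve\partial_t g_\ve+\mathcal{T}(V(t))g_\ve=\ve\,h_\ve(y,t)+\mathcal{N}(g_\ve),
\end{equation*}
where $h_\ve$ incorporates $\ve\partial_t\psi(\,\cdot\,,V(t))$ and the residual error terms, and $\mathcal{N}$ is quadratic in $g_\ve$. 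Duhamel's formula combined with the exponential decay of the semigroup gives $\|g_\ve(t)\|_{L^2}\lesssim Ce^{-\mu t/\ve}\|g_\ve(0)\|_{L^2}+o(1)$, so $g_\ve$ relaxes on the fast time $\ve/\mu$ to an $o(1)$ remainder, and this smallness is propagated across successive intervals $I_k$ without accumulation precisely because of the exponential decay.

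With $\|f_\ve(\,\cdot\,,t)-\psi(\,\cdot\,,V(t))\|_{L^2}\to 0$ uniformly on $[\ve|\log\delta|,T_\star]$, I would feed this into \eqref{eq_for_V_ve_sys}, use the identity $\int(\theta_0')^2\psi(\,\cdot\,,V(t))\,dy=\Phi_\beta(V(t))$ from \eqref{def_for_motion1}, and conclude that $c_0V_\ve(t)=\Phi_\beta(V(t))-F(t)+o(1)=c_0V(t)+o(1)$ uniformly in $t$, after separately handling the initial boundary layer of width $O(\ve|\log\ve|)$ exactly as in Step~1 of Theorem \ref{theorem:contraction}. Combined with the ansatz \eqref{eq_form} and the smoothness of $V(t)$, this gives $\dot x_\ve=V_\ve\to V$ in $L^\infty(0,T_\star)$, and integration yields $x_\ve\to x_0$ in $C^1[0,T_\star]$.

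The main technical obstacle is the uniform (in the parameter $V\in\mathcal{S}$) exponential semigroup bound for the non-self-adjoint, nonlocal rank-one perturbation $\mathcal{T}(V)$: one must show that the essential spectrum $\sigma_{\mathrm{ess}}(\mathcal{A}(V))=\{k^2-iVk+1\}$ stays strictly inside the right half-plane (which is automatic since $\mathrm{Re}(k^2-iVk+1)\geq 1$) and, more delicately, that no eigenvalue from the point spectrum characterized by \eqref{point_spectrum} crosses the imaginary axis along the curve $\{V(t):t\in[0,T_\star]\}$. Tracking the resolvent bound uniformly in $V$ and handling the shift of the linearization base point as $V(t)$ varies in time (which introduces a source term of size $O(\ve|\dot V|)$ through $\partial_t\psi(\,\cdot\,,V(t))$) is where the stability hypothesis $V(t)\in\mathcal{S}$ is used in its strong, quantitative form, and this is the delicate part that replaces the elementary contraction mapping argument available only for small $\beta$.
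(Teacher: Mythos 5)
Your proposal follows essentially the same route as the paper: reduction to the singularly perturbed nonlocal equation, partition of $[0,T_\star]$ into intervals of length $\sqrt{\ve}$ with $F$ frozen on each, uniform exponential stability of the semigroup generated by $\mathcal{T}(V(t))$ (the paper's Lemma \ref{exp_stability}, proved via resolvent bounds and the Gearhart--Pr\"uss theorem), Duhamel's formula with a quadratic remainder, and propagation of smallness across intervals without accumulation of errors. The only substantive item you leave implicit is the short-time parabolic regularization needed to handle the $\partial_y$-derivative source terms inside the Duhamel integral (part (ii) of Lemma \ref{exp_stability}), but this is a technical step within the same argument rather than a different approach.
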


Theorem \ref{theorem:sharp_interface} justifies the sharp interface equation \eqref{117} for any $\beta$. Its proof consists of two steps: (i) reduction to a  single equation (nonlinear, singularly perturbed) which is done in Section \ref{section:reduction} and (ii) passage to the limit in this equation based on stability analysis presented below, which is the main ingredient of the proof.

\begin{proof} {\it of Theorem \ref{theorem:sharp_interface}.}
	Rewrite \eqref{interf_prelim}-\eqref{interf_prelim_A} in the form of  the single PDE
	\begin{equation}\label{stab_eq_1}
	\ve \frac{\partial A_\ve}{\partial t}=\partial_y^2A_\ve+\frac{1}{c_0+\ve\tilde{\mathcal{O}}_{\ve}(t)}\left(\int (\theta_0')^2A_{\ve}dy -F(t)+\ve\mathcal{O}_{\ve}(t)+\tilde {o}_\ve(t)\right)\partial_y A_{\ve}-A_\ve -\beta\theta_0',
	\end{equation}
Recall that $\tilde{\mathcal{O}}_{\ve}(t)$ and ${\mathcal{O}}_{\ve}(t)$ are uniformly bounded functions, $\tilde {o}_\ve(t)$ tends to $0$ uniformly on $[0,T_{\ast}]$ as $\ve\to 0$.
	We next pass to the limit in equation \eqref{stab_eq_1} using exponential stability (established in \eqref{exp_stable_semigroup}) of the semigroup corresponding to the linearized operator. The following local stability result plays the crucial role in the proof.
	
	\begin{lemma}
		\label{l_stab} 
		There exist $\omega>0$ and $\delta>0$ such that if 
		\begin{equation}
		\|A_\ve(\,\cdot\,,t)-\psi(\,\cdot\,,V(t))\|_{L^2}\leq \delta, 
		\end{equation}
then for any $0<r<1$ and sufficiently small $\ve$, $\ve<\ve_0 (T_\ast)$, the function $\eta_\ve(y,t,\tau)=A_{\ve}(y,t+\ve \tau)-\psi(y, V(t))$ satisfies
\begin{equation}
\label{stab_eq_2}
\|\eta_\ve(\,\cdot\,t,\tau)\|^2_{L^2}\leq C\left(e^{-\frac{\omega}{2}\tau}\|\eta_\ve(\,\cdot\,t,0)\|^2_{L^2}+
\max_{s\in [t,t+\ve \tau]}\left(|F(t)-F(s)|^2+\tilde o_\ve^2(s)\right) +\ve^2\right)
		\end{equation}
for $0\leq \tau \leq \frac{1}{\ve^r}$. The constants $\omega, \delta$ and $C$ in 
	are independent of  $t$, $\tau$ and $\ve$.  
	\end{lemma}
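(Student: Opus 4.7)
\textbf{Proof plan for Lemma \ref{l_stab}.} The strategy is to linearize \eqref{stab_eq_1} around the frozen steady state $\psi(\cdot,V(t))$ and run a Duhamel argument driven by the exponential decay of the semigroup generated by $-\mathcal{T}(V(t))$. First I would substitute $A_\ve(y,t+\ve\tau)=\psi(y,V(t))+\eta_\ve(y,t,\tau)$ into \eqref{stab_eq_1}, rescale the time as $\tau=(s-t)/\ve$, and use the identity $\partial_y^2\psi+V(t)\partial_y\psi-\psi=\beta\theta_0^\prime$ to cancel the nonhomogeneous term $-\beta\theta_0^\prime$. A direct computation then yields an equation of the form
\begin{equation*}
\partial_\tau\eta_\ve+\mathcal{T}(V(t))\eta_\ve=R_\ve,
\end{equation*}
where $\mathcal{T}(V(t))$ is the operator in \eqref{def_of_T}, and $R_\ve=\delta\mathcal{V}_\ve\,\partial_y\eta_\ve+\mathcal{E}_\ve\,\partial_y\psi(\cdot,V(t))$ with $\delta\mathcal{V}_\ve:=\mathcal{V}_\ve-V(t)$ the scalar speed correction, and $\mathcal{E}_\ve$ a scalar absorbing $F(t)-F(t+\ve\tau)$, $\tilde o_\ve$, and the $O(\ve)$ contributions of $\tilde{\mathcal{O}}_\ve,\mathcal{O}_\ve$. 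The key structural observation is that $\delta\mathcal{V}_\ve$ depends only on $(t,\tau)$, so $\delta\mathcal{V}_\ve\,\partial_y\eta_\ve=\partial_y(\delta\mathcal{V}_\ve\,\eta_\ve)$.

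Next I would establish the required semigroup bounds. Because $V(\cdot)$ is continuous on $[0,T_\star]$ with values in the open set $\mathcal{S}$, Theorem \ref{spectral_theorem} combined with a compactness argument yields $\omega>0$ (uniform in $t\in[0,T_\star]$) such that $\sigma(\mathcal{T}(V(t)))\subset\{\mathrm{Re}\,\lambda>2\omega\}$. Writing $\mathcal{T}(V)=\mathcal{A}(V)+\mathcal{P}(V)$ with $\mathcal{A}(V)$ the convection--diffusion operator $-\partial_y^2-V\partial_y+1$ (which generates an analytic semigroup on $L^2(\mathbb{R})$) and $\mathcal{P}(V)$ a bounded rank-one (hence compact) perturbation, standard perturbation theory for analytic semigroups delivers
\begin{equation*}
\|e^{-\mathcal{T}(V(t))\tau}\|_{L^2\to L^2}\leq Ce^{-\omega\tau},\qquad
\|e^{-\mathcal{T}(V(t))\tau}\partial_y\|_{L^2\to L^2}\leq \frac{C}{\sqrt{\tau}}e^{-\omega\tau},
\end{equation*}
uniformly for $t\in[0,T_\star]$.

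Applying Duhamel and using the divergence form of the troublesome term I would get
\begin{equation*}
\|\eta_\ve(\tau)\|_{L^2}\leq Ce^{-\omega\tau}\|\eta_\ve(0)\|_{L^2}
+C\int_0^\tau\frac{e^{-\omega(\tau-s)}}{\sqrt{\tau-s}}|\delta\mathcal{V}_\ve(s)|\,\|\eta_\ve(s)\|_{L^2}\,ds
+C\int_0^\tau e^{-\omega(\tau-s)}|\mathcal{E}_\ve(s)|\,ds.
\end{equation*}
The source integral is bounded by $C\bigl(\max_{s\in[t,t+\ve\tau]}(|F(t)-F(s)|+|\tilde o_\ve(s)|)+\ve\bigr)$. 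For the nonlinear term I would use $|\delta\mathcal{V}_\ve(s)|\leq C\|\eta_\ve(s)\|_{L^2}+C(|F(t)-F(t+\ve s)|+|\tilde o_\ve|+\ve)$, which together with the smallness hypothesis $\|\eta_\ve\|_{L^2}\leq\delta$ (propagated by a bootstrap) lets one absorb the quadratic contribution after a singular Grönwall inequality; squaring gives \eqref{stab_eq_2}. The restriction $\tau\leq\ve^{-r}$ with $r<1$ guarantees $\ve\tau\to 0$, so by uniform continuity of $F$ on $[0,T_\star]$ the source terms stay controlled, closing the bootstrap that keeps $\|\eta_\ve(\cdot,t,\tau)\|_{L^2}\leq\delta$ throughout.

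The main obstacle I anticipate is the semigroup step: verifying analyticity and the smoothing bound for $-\mathcal{T}(V(t))$ \emph{uniformly} in $t$, since $\mathcal{T}(V)$ is non-self-adjoint and nonlocal and the spectral description in Theorem \ref{spectral_theorem} is only through the characteristic equation $\left((\mathcal{A}(V)-\lambda)^{-1}\partial_y\psi,(\theta_0^\prime)^2\right)_{L^2}=c_0$. The remedy is to isolate the rank-one nonlocal part $\mathcal{P}(V)$, apply analytic semigroup theory to $\mathcal{A}(V)$ where everything is explicit (Fourier multipliers), and then exploit compactness together with the uniform separation of $\sigma(\mathcal{T}(V(t)))$ from the imaginary axis to transfer the estimates to $\mathcal{T}(V(t))$ via a resolvent identity.
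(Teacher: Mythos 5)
Your proposal is correct and follows the same overall architecture as the paper's proof: freeze $V(t)$, write $\eta_\ve=A_\ve-\psi(\cdot,V(t))$, derive $\partial_\tau\eta_\ve+\mathcal{T}(V(t))\eta_\ve=R_\ve$ exactly as in \eqref{DLINNOE_URAVNENIE}, obtain uniform-in-$t$ exponential decay of $e^{-\mathcal{T}(V(t))\tau}$ from the rank-one resolvent representation together with a compactness argument over $t\in[0,T_\star]$, and close via Duhamel, absorbing the quadratic term $\bigl(\int(\theta_0^\prime)^2\eta_\ve\,dy\bigr)\partial_y\eta_\ve$ using the smallness hypothesis $\|\eta_\ve\|_{L^2}\le\delta$ and a bootstrap. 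The one place where you genuinely diverge is the treatment of the derivative in the Duhamel integral, which is the delicate step: you invoke analyticity of the perturbed semigroup and the pointwise smoothing bound $\|e^{-\mathcal{T}\tau}\partial_y\|\le C\tau^{-1/2}e^{-\omega\tau}$, then run a singular Gr\"onwall inequality; the paper instead avoids fractional smoothing and analyticity of the \emph{perturbed} semigroup by proving the time-averaged estimate \eqref{stabb_ineq} (Lemma \ref{exp_stability}(ii)) through an explicit energy argument --- splitting the Duhamel integral over $[0,\tau-1]$ and $[\tau-1,\tau]$, writing $e^{-\mathcal{T}s}\partial_y g=\partial_y v$ for an auxiliary parabolic problem, and extracting a good intermediate time $s_0$ --- and then closes the quadratic differential inequality with an explicit supersolution $\overline{\alpha}(s)=qe^{-\omega s/2}+2\delta_\ve/\omega$ rather than a singular Gr\"onwall. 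Your route is shorter if one is willing to quote the sectoriality of $\mathcal{A}(V)+\mathcal{P}(V)$ and the spectrum-determined growth for analytic semigroups (and to relate $\partial_y$ to the semigroup by duality, since $\mathcal{T}$ is non-self-adjoint); the paper's route is more elementary and self-contained, at the cost of the extra energy computations in Lemma \ref{exp_stability}. Both yield \eqref{stab_eq_2} with the rate $\omega/2$ after squaring.
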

	
	This Lemma shows that if the initial data are at distance at most $\delta$ from $\psi$ (in the $L^2$-norm), then the solution $A_\ve(y,t+\ve \tau)$ approaches $\psi$ exponentially fast in $\tau$ (first term in the RHS of \eqref{stab_eq_2}) with  a deviation that is bounded from above independently of $t$ (described by the second and the third terms in the RHS of \eqref{stab_eq_2}).   
	The conclusion of Theorem \ref{theorem:sharp_interface} immediately follows from this Lemma. Indeed, consider the time interval $(0,t_1)$, $t_1:=\sqrt{\ve}$. Then by Lemma \ref{l_stab} we obtain 
	\begin{equation}
	\label{stab_eq_3}
	\| A_{\ve}(\,\cdot\,,t_1)-\psi(\,\cdot\,, V(t_1))\|_{L^2}^2\leq C \left(e^{-\frac{\omega}{2\sqrt{\ve}}}\delta+m^2(\sqrt{\ve}) +\max_{s\in [0,T_{\ast}]}\tilde o_\ve^2(s)+\ve^2 \right)+C_1\ve,
	\end{equation}
	where $m$ denotes the modulus of continuity  of $F$ on $[0,T_\star]$. Choose $\ve$ small enough so that $\log\frac{1}{\ve}\leq \frac{\omega}{2}\sqrt{\ve}$ and the right hand side  of \eqref{stab_eq_3} is bounded by $\delta$.
Similarly, for intervals $(t_1,t_2)$, where $t_2:=2\sqrt{\ve}$, $(t_2,t_3$, where $t_3:=3\sqrt{\ve}$, etc., we obtain
	\begin{equation*}
	\|A_{\ve}(\,\cdot\,,t_i)-\psi(\,\cdot\,,V(t_i))\|^2_{L^2}\leq C\left(\ve +m^2(\sqrt{\ve})+\max_{s\in [0,T_{\ast}]}\tilde o_\ve^2(s)+\ve^2\right)+C_1\ve<\delta.
	\end{equation*}
	To complete the proof of Theorem \ref{theorem:sharp_interface} we again use Lemma \ref{l_stab} to bound $\|A_{\ve}(\,\cdot\,,t)-\psi(\,\cdot\,,V(t))\|^2_{L^2}$ for $t\in (t_i,t_{i+1})$, $i=1,2\dots$. $\square$
	
\medskip
	
	\noindent{\it Proof of Lemma \ref{l_stab}.}
	As in the first step of the proof of Theorem \ref{theorem:contraction}, consider the function $\eta_{\ve}(y,\tau):=A_{\ve}(y,t+\ve\tau)-\psi(y,V(t))$, hereafter $t$ is considered as a fixed parameter. It follows from \eqref{stab_eq_1} and \eqref{psi_in_formal} that  $\eta_{\ve}$ satisfies the following PDE 
\begin{equation}
	\frac{\partial \eta_{\ve}}{\partial \tau}+\mathcal{T}\eta_{\ve}=\frac{\partial_y\eta_\ve}{c_0+\tilde O_\ve} \int(\theta_0^\prime)^2\eta_\ve dy +
	\frac{\Lambda_\ve}{c_0+\tilde O_\ve}\partial_y\eta_\ve-\frac{\ve \tilde O_\ve}{c_0(c_0+\tilde O_\ve)}\partial_y\psi \int(\theta_0^\prime)^2\eta_\ve dy
	+\frac{\Lambda_\ve}{c_0+\tilde O_\ve}\partial_y\psi,
\label{DLINNOE_URAVNENIE}	
\end{equation}
where 
$$
\Lambda_\ve(t,\tau):=F(t)-F(t+\ve\tau)+\ve O_\ve(t+\ve\tau)+\tilde o_\ve(t+\ve\tau) +\ve\frac{\tilde O_\ve(t+\ve\tau)}{c_0}\left(\int(\theta_0^\prime)^2\psi(y,V(t)) dy\right).
$$
Introduce the semigroup operator $e^{-\mathcal{T}\tau},\tau>0$ in $L^2(\mathbb R)$, then by Duhamel's principle
	\begin{equation}\label{semigroup_representation}
	\eta_\ve (\,\cdot\, , \tau )=e^{-\mathcal{T}\tau}\eta_\ve(\,\cdot\,, 0) +\int_0^{\tau} e^{-\mathcal{T}(\tau - \tau')} R_\ve (\,\cdot\,,\tau')d\tau',
	\end{equation}
where $R_\ve (y,\tau)$ denotes the right hand side of \eqref{DLINNOE_URAVNENIE}.	

In order to proceed with the proof of Lemma \ref{l_stab} we first prove exponential stability of the semigroup $e^{-\mathcal{T}t}$ and establish its consequences in the following 
	
\begin{lemma}
		\label{exp_stability} 
		There exists $\omega>0$ such that 
\begin{itemize}
	\item[{\rm(i)\ }] the following inequality holds
	\begin{equation}
	\label{exp_stable_semigroup}
	\|e^{-\mathcal{T}\tau}\|\leq Me^{-\omega \tau}, \;\;\tau\geq 0,
	\end{equation} 
	where $\|e^{-\mathcal{T}\tau}\|$ stands for the operator norm of $e^{-\mathcal{T}\tau}$ in $L^2(\mathbb R)$;
	\item[{\rm(ii)}] for every $g(y,t)$,
	\begin{equation}
	\label{stabb_ineq}
	\Bigl\|\int\limits_0^{\tau} e^{-\mathcal{T}(\tau-\tau')}\frac{\partial^k g}{\partial y^k}(\,\cdot\,,\tau')d\tau'\Bigr\|^2_{L^2}
	\leq C \int_0^{\tau} e^{-\omega(\tau-\tau')}\|g(\,\cdot\,,\tau')\|^2_{L^2}d\tau', \quad k=0,1
	\end{equation}
	with a constant $C$ independent of $g$.
\end{itemize}		
		 Moreover,  constants $\omega$, $M$ and $C$ can be chosen independently of $t$ (recall that $\mathcal{T}=\mathcal{T}(V(t))$ depends on $t$). 
	\end{lemma}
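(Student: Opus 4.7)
\emph{Proof plan for Lemma~\ref{exp_stability}.} The plan is to view $\mathcal{T}(V)$ as a compact, rank-one perturbation of the sectorial constant-coefficient operator $\mathcal{A}(V):=-\partial_y^2-V\partial_y+1$, apply analytic semigroup theory on $L^2(\mathbb{R})$ to establish (i), and then combine (i) with the analytic smoothing property to derive (ii). First I would verify that $\mathcal{A}(V)$ is sectorial (via Fourier analysis, its spectrum is the parabola $\{1+k^2-iVk:k\in\mathbb{R}\}\subset\{\mathop{\rm Re}\lambda\geq 1\}$), and observe that $\mathcal{P}(V)f:=-c_0^{-1}((\theta_0')^2,f)_{L^2}\,\partial_y\psi(\cdot,V)$ is bounded and rank one, so $\mathcal{T}(V)=\mathcal{A}(V)+\mathcal{P}(V)$ is sectorial and $\sigma_{\rm ess}(\mathcal{T}(V))=\sigma_{\rm ess}(\mathcal{A}(V))$. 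The remaining isolated eigenvalues are characterized by Theorem~\ref{spectral_theorem}, and Condition~\textbf{(C1)} (which places $V(t)$ in $\mathcal{S}$) forces them into $\{\mathop{\rm Re}\lambda>0\}$. Because for analytic semigroups in Hilbert space the growth bound equals the spectral abscissa, this gives $\|e^{-\mathcal{T}(V(t))\tau}\|\leq M_t\, e^{-\omega_t\tau}$ with $\omega_t>0$ at each fixed $t$.

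To upgrade this to the uniform bound (i), I would use compactness of the image $V([0,T_\ast])\subset\mathcal{S}$ together with operator-norm continuity of $V\mapsto\mathcal{T}(V)$ (through $V$ and $\psi(\cdot,V)$, both varying smoothly); isolated eigenvalues of finite multiplicity depend continuously on $V$ by Kato perturbation theory, and the essential spectrum is uniformly in $\{\mathop{\rm Re}\lambda\geq 1\}$. Hence $s(V)$ is lower semicontinuous and strictly positive on the compact $V([0,T_\ast])$, providing a uniform $\omega>0$ and uniform $M$. For (ii) with $k=0$ the estimate is immediate from (i) and Cauchy--Schwarz, since $\|\int_0^\tau e^{-\mathcal{T}(\tau-s)}g(s)ds\|^2\leq M^2(\int_0^\tau e^{-\omega(\tau-s)}ds)\int_0^\tau e^{-\omega(\tau-s)}\|g(s)\|^2ds\leq(M^2/\omega)\int_0^\tau e^{-\omega(\tau-s)}\|g(s)\|^2ds$. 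For $k=1$ I would combine (i) with the analytic smoothing bound $\|e^{-\mathcal{T}\sigma}\partial_y\|_{L^2\to L^2}\leq C\sigma^{-1/2}e^{-\omega\sigma}$ (derived from the sectorial smoothing $\|\partial_y e^{-\mathcal{A}\sigma}\|\leq C\sigma^{-1/2}e^{-\sigma}$ via the Duhamel identity $e^{-\mathcal{T}\sigma}=e^{-\mathcal{A}\sigma}-\int_0^\sigma e^{-\mathcal{A}(\sigma-s)}\mathcal{P}e^{-\mathcal{T}s}ds$ and (i)) with a splitting at a fixed $\tau_0>0$: on the outer region $s\in(0,\tau-\tau_0)$ the weight $(\tau-s)^{-1/2}$ is bounded by $\tau_0^{-1/2}$ and Cauchy--Schwarz as in the $k=0$ case applies, whereas on the inner region $s\in(\tau-\tau_0,\tau)$ the function $u_2(\tau'):=\int_{\tau-\tau_0}^{\tau'}e^{-\mathcal{T}(\tau'-s)}\partial_y g(s)ds$ solves $\partial_{\tau'}u_2+\mathcal{T}u_2=\partial_y g$ with $u_2(\tau-\tau_0)=0$, so multiplying by $u_2$, absorbing $\tfrac12\|\partial_y u_2\|^2$ via $|(g,\partial_y u_2)|\leq\|g\|^2+\tfrac14\|\partial_y u_2\|^2$, and applying Gronwall on an interval of length $\tau_0$ yield $\|u_2(\tau)\|^2\leq C(\tau_0)\int_{\tau-\tau_0}^\tau\|g\|^2ds\leq C(\tau_0)e^{\omega\tau_0}\int_0^\tau e^{-\omega(\tau-s)}\|g\|^2ds$.

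The main obstacle will be the $k=1$ case, because $\mathcal{T}$ is non-self-adjoint and the rank-one term $\mathcal{P}$ need not be small, so $(\mathcal{T}u,u)$ is not coercive and exponential decay is a purely spectral (not variational) property; the time splitting above is precisely what reconciles spectral decay on the outer region with short-time energy estimates on the inner region. A secondary difficulty is the uniformity in $t$ of both the decay rate $\omega$ and the smoothing constant $C$, which ultimately reduces to the joint continuity of the sectorial picture and of $\psi(\cdot,V)$ as $V$ ranges over the compact curve $V([0,T_\ast])\subset\mathcal{S}$.
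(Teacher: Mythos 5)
Your plan follows the same overall architecture as the paper's proof and is workable, but the two sub-steps where you diverge deserve comment. For part (i), the paper does not go through the spectral mapping theorem plus continuity of eigenvalues; it proves directly the uniform sectorial resolvent estimate \eqref{sector_bound} (an explicit Fourier bound for $\mathcal{A}(V)$, the rank-one resolvent formula \eqref{repr_resolv_mu}, and a Montel/normal-families argument bounding $|\mu(\lambda;t,\omega)|$ from below uniformly in $t$), and then invokes Pazy's generation theorem. The reason this matters is the constant $M$: a uniformly positive spectral abscissa over the compact curve $V([0,T_\ast])$ gives a uniform $\omega$ but not, by itself, a uniform $M$ --- for that you need resolvent bounds on a sector that are uniform in $t$, which is exactly what your remark about operator-norm continuity must be upgraded to; carried out carefully, your compactness argument reproduces the paper's resolvent estimate in different words. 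For part (ii) with $k=1$, the paper replaces your abstract smoothing bound $\|e^{-\mathcal{T}\sigma}\partial_y\|\leq C\sigma^{-1/2}e^{-\omega\sigma}$ by the cruder but sufficient statement \eqref{parabolic_regularity} valid for $\tau\geq 1$, proved by an elementary energy argument (write $e^{-\mathcal{T}s}\partial_y g=\partial_y v$ with $v$ solving the conjugated problem \eqref{eq_for_v_star_1}, extract a time $s_0\in(0,1]$ with $\|\partial_y v(\cdot,s_0)\|_{L^2}\leq C\|g\|_{L^2}$, then use the semigroup property); your splitting at a fixed $\tau_0$ and the inner-region energy/Gronwall estimate are then essentially identical to the paper's treatment of $[\tau-1,\tau]$. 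One caveat on your route: the Duhamel identity applied to $\partial_y g$ produces the term $\int_0^\sigma e^{-\mathcal{A}(\sigma-s)}\mathcal{P}e^{-\mathcal{T}s}\partial_y g\,ds$, which contains the very quantity you are estimating; to close it you need either a bootstrap on $\sup_{0<s\leq\sigma}s^{1/2}e^{\omega s}\|e^{-\mathcal{T}s}\partial_y g\|_{L^2}$ (the $s^{-1/2}$ singularity is integrable, so this works for small $\sigma$) or a duality argument pairing with the smooth, decaying function $(\theta_0')^2$. Neither issue is fatal, but both need to be written out explicitly.
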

\begin{proof} {\it of Lemma \ref{exp_stability}.} 
\noindent STEP 1 ({\it proof of (i)}).
		For every fixed $V\in \mathcal{S}$, it follows from Gerhardt-Pr\'{u}ss theorem (see, e.g., \cite{Gea78,Pru84}) that 
		\eqref{exp_stable_semigroup} holds with some constants 
		$M$ and $\omega>0$. However, for later use we need a stronger result, we prove that these constants can be chosen 
		independently of $V=V(t)$ for $t\in[0,T_\ast]$. To this end we establish the following bound
		\begin{equation}
		\label{sector_bound}
		\|(\mathcal{T}(V(t))-\lambda-\omega)^{-1}\|\leq \frac{C}{|\lambda|} \ \text{for}\ \lambda\in \Pi_{\varphi_0}:=
		\{-re^{i\varphi}\,;\, |\varphi|\leq \pi/2+\varphi_0,r> 0 \}, 
		\end{equation}
		with constants $\omega>0$, $\varphi_0>0$ and $C$ all independent of $t\in [0,T_\ast]$. 
		Then Theorem I.7.7 from \cite{Paz83} yields the inequality $\|e^{-(\mathcal{T}(V(t))-\omega)\tau}\|\leq M$ for $\tau>0$
		with constants $\omega>0$ and  $M$ independent of 
		$t$, and this latter inequality is equivalent to  \eqref{exp_stable_semigroup}.
		
Set $\mathcal{T}^\prime(t,\omega):=\mathcal{T}(V(t))-\omega$ and  $\mathcal{A}^\prime(t,\omega):=\mathcal{A}(V(t))-\omega$. 
To prove \eqref{sector_bound} we first derive by Fourier analysis,
\begin{equation}
		\label{sector_bound_for_A}
		\|(\mathcal{A}^\prime(t,\omega)-\lambda)^{-1}\| \leq \max_{k\in\mathbb{R}} \frac{1}{|k^2-iV(t)k+1-
			\lambda-\omega|}\leq \frac{C}{|\lambda|+1}
		\ \text{for}\ \lambda\in \Pi_{\overline{\varphi}}, 
\end{equation}
		where $\overline{\varphi}=\frac{1}{2}\arctan \frac{1}{\max_t |V(t)|}$, constant $C$ is independent of 
		both $t\in [0,T_\ast]$ and $0\leq\omega<1/2$. Next we make use of the 
representation (cf. Theorem \ref{spectral_theorem}) 
		\begin{equation}
		\begin{aligned}
		(\mathcal{T}^\prime(t,\omega)-\lambda)^{-1} \, \cdot\, 
		=\frac{((\mathcal{A}^\prime(t,\omega)-\lambda)^{-1}\, \cdot\, ,(\theta_0^\prime)^2)_{L^2}}
		{\mu(\lambda;t,\omega)}
		(\mathcal{A}^\prime(t,\omega)-\lambda)^{-1}\partial_y \psi
		+ (\mathcal{A}^\prime(t,\omega)-\lambda)^{-1}\cdot\ ,
		\end{aligned}
		\label{repr_resolv_mu}
		\end{equation}
		where $\mu(\lambda;t,\omega)=c_0-
	\left((\mathcal{A}^\prime(t,\omega)-\lambda)^{-1}\partial_y \psi,(\theta_0^\prime)^2\right)_{L^2}$.

		It follows  from \eqref{sector_bound_for_A}
		that the family of holomorphic functions $\mu(\cdot;t,\omega):\Pi_{\overline{\varphi}}\to \mathbb C$ satisfies $|\mu|>1/2$ everywhere but on a fixed bounded subset $K$ of $\Pi_{\overline{\varphi}}$ which is independent of $0\leq \omega\leq 1/2$ and $t\in [0,T_{\ast}]$. 
		On the other hand the functions $\mu(\lambda;t,\omega)$ 
		are uniformly bounded  in $\{\lambda\in\mathbb{C};\, {\rm Re} \lambda<1/4\}$ and they depend continuously on $t$ and $\omega$.
		%
		Now taking into account the fact that $V(t)\in \mathcal{S}$ for all $t\in [0,T_{\ast}]$ we show that $|\mu(\lambda;t,0)|\geq \mu_0$ when $\lambda \in K$ and $|\text{Re}\lambda|\leq 2\omega$ for some $1/2\geq\mu_0>0$ and $1/2\geq \omega>0$. Indeed, otherwise there is a sequence $t_k\to t_0$, $\lambda_k\to \lambda_0$ such that $\text{Re}\lambda_0=0$ and $\mu(\lambda_k;t_k,0)\to 0$. Then, by Montel's theorem, up to extracting a subsequence $\mu(\lambda_k;t_k,0)\to\mu(\lambda_0;t_0,0)$, but  $\mu(\lambda_0;t_0,0)\neq 0$ as $V(t_0)\in \mathcal{S}$ (cf. proof of Theorem \ref{spectral_theorem}).  
		Thus there are  $\varphi_0>0$ 
		($\varphi_0\leq\overline{\varphi}$)
		such that $|\mu(\lambda;t,\omega)|\geq \mu_0$ for $\lambda\in \Pi_{\varphi_0}$. 
		Using this fact and inequality  \eqref{sector_bound_for_A} to bound terms in 
		in \eqref{repr_resolv_mu}  we get \eqref{sector_bound}, and therefore \eqref{exp_stable_semigroup} holds for some $\omega>0$ 
		and $M$, both being independent of $t$. This result immediately yields \eqref{stabb_ineq} for $k=0$.

		\noindent STEP 2 ({\it proof of (ii)}). To prove \eqref{stabb_ineq} for $k=1$ consider first $\tau\geq 1$ and show that 
		\begin{equation}
		\label{parabolic_regularity}
		\|e^{-\mathcal{T}\tau}\partial_y g\|_{L^2}\leq Ce^{-\omega\tau}\|g\|^2_{L^2}.
		\end{equation}
		
		The idea here is to establish a short time parabolic regularization property. Consider $f:=e^{-\mathcal{T}s}\partial_y g$, it can be represented as  $f=\partial_y v$ with $v$ solving
		\begin{empheq}[left=\empheqlbrace]{align}
		\partial_{s}v&=\partial_y^2 v+V\partial_y v-v -\frac{2\psi}{c_0}\int \theta_0''\theta_0' v dy,
		\label{eq_for_v_star_1}\\
		v(y,0)&=g(y).\label{ic_for_v_star_2}
		\end{empheq}
		In a standard way, multiplying \eqref{eq_for_v_star_1} by $v$ and integrating in $y$ we get 
		\begin{equation}
		\label{soviet_star}
		\frac{1}{2}\frac{d}{ds}\|v\|^2_{L^2}+\|\partial_y v\|_{L^2}^2\leq C \|v\|^2_{L^2}.
		\end{equation}
		Then an application of Gronwall's  inequality yields the uniform bound
		\begin{equation*}
		\|v\|_{L^2}\leq C \|g\|_{L^2}\;\text{ for }\;0\leq s \leq 1.
		\end{equation*}
		Using this bound in \eqref{soviet_star} we derive
		\begin{equation*}
		\int_0^1\|\partial_y v(\,\cdot\,,s)\|^2_{L^2}ds \leq C\|g\|^2_{L^2}.
		\end{equation*} 
		It follows that $\|\partial_y v(\,\cdot\,,s_0)\|_{L^2}\leq C_1\|g\|_{L^2}$ for some $0<s_0\leq 1$.   Then by the semigroup property we have
		\begin{eqnarray}
		\|e^{-\mathcal{T}\tau}\partial_y g\|_{L^2} =\|e^{-\mathcal{T}(\tau-s_0)}\partial_y v(\,\cdot\,,s_0)\|_{L^2}\leq M e^{-\omega(\tau-s_0)}C_1 \|g\|_{L^2}\leq C_2e^{-\omega \tau}\|g\|_{L^2}\quad\quad\text{ for }\tau\geq 1, 
		\end{eqnarray}
		where we have used \eqref{exp_stable_semigroup}. The bound  \eqref{parabolic_regularity} being established, we conclude with the estimate
	\begin{equation}
	\label{first_bound}
	\begin{aligned}
	\Bigl\|\int_0^{\tau-1}e^{-\mathcal{T}(\tau-\tau')}\partial_y g(\,\cdot\,,\tau')d\tau'\Bigr\|^2_{L^2}&\leq 
	\Bigl(C \int_0^{\tau-1} e^{-\omega(\tau-\tau')}\|g(\,\cdot\,\tau')\|_{L^2}d\tau'\Bigr)^2
	\\
	&\leq 
	C_1
	\int_0^{\tau-1} e^{-\omega(\tau-\tau')}\|g(\,\cdot\,\tau')\|_{L^2}^2d\tau'.
		\end{aligned}
		\end{equation}

		
		To complete the proof of \eqref{stabb_ineq} consider
		\begin{equation*}
		\tilde{f}(\,\cdot\,):=\int_{\tau-1}^{\tau}e^{-\mathcal{T}(\tau-\tau')}\partial_y g(\,\cdot\,,\tau')d\tau'=\int_{0}^{1}e^{-\mathcal{T}(1-s)}\partial_y g(\,\cdot\,,\tau-1+s)ds
		\end{equation*}
	(if $\tau<1$, we set $g(y,\tau^\prime)\equiv 0$ for $\tau^\prime<0$).	It follows from the definition of $\tilde{f}$ that $\tilde{f}(y)=\tilde{v}(1,y)$, where $v$ solves 
		\begin{empheq}[left=\empheqlbrace]{align}
		\partial_s\tilde{v}+\mathcal{T}\tilde{v}&=\partial_yg(y,\tau-1+s),\label{eq_for_v_s}\\
		\tilde{v}(0,y)&=0.
		\end{empheq}
		Multiply equation \eqref{eq_for_v_s} by $v$ and integrate in $y$ to obtain 
		\begin{eqnarray*}
			\frac{1}{2} \frac{d}{ds}\|\tilde{v}\|^2_{L^2}+\|\partial_y\tilde{v}\|^2_{L^2}&\leq& -\int g(y,\tau-1+s)\partial_y\tilde{v}(y,s)dy+C\|\tilde{v}\|^2_{L^2}\\
			&\leq&\|\partial_y\tilde{v}\|^2_{L^2}+\frac{1}{4}
			\|g(\,\cdot\,,\tau-1+s)\|^2_{L^2}+C\|\tilde{v}\|^2_{L^2}.
		\end{eqnarray*}
		Now apply Gonwall's inequality. As a result we get
		\begin{equation*}
		\|\tilde{v}(\,\cdot\,,1)\|^2_{L^2}\leq C \int_0^1\|g(\,\cdot\,,\tau-1+s)\|_{L^2}^2 ds.
		\end{equation*}
		Thus
		\begin{eqnarray}
		\Bigl\|\int_{\tau-1}^{\tau}e^{-\mathcal{T}(\tau-\tau')}\partial_y g(\,\cdot\,,\tau')d\tau'\Bigr\|^2_{L^2}
		\leq C \int_0^1\|g(\,\cdot\,,\tau-1+s)\|^2_{L^2}ds
		\leq
		C_1 \int_{\tau-1}^{\tau}e^{-\omega(\tau-\tau')}\|g(\tau')\|^2_{L^2}d\tau'. \label{second_bound}
		\end{eqnarray}
		Combining \eqref{second_bound} with \eqref{first_bound} completes the proof of Lemma \ref{exp_stability}.
	\end{proof}$\square$
	
\begin{figure}[t]
	\begin{center}
		\includegraphics[width=.45\textwidth]{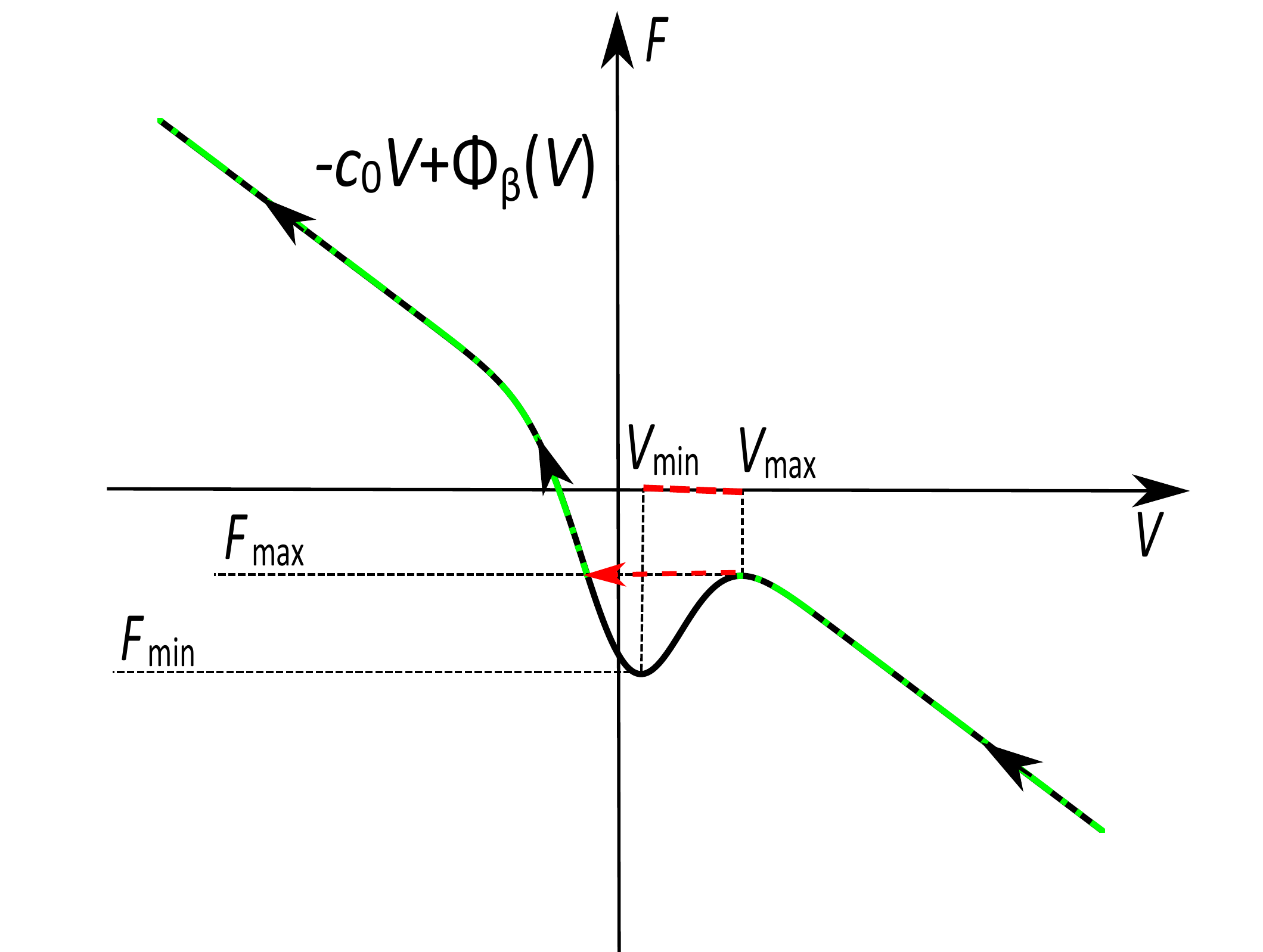}
		\includegraphics[width=.45\textwidth]{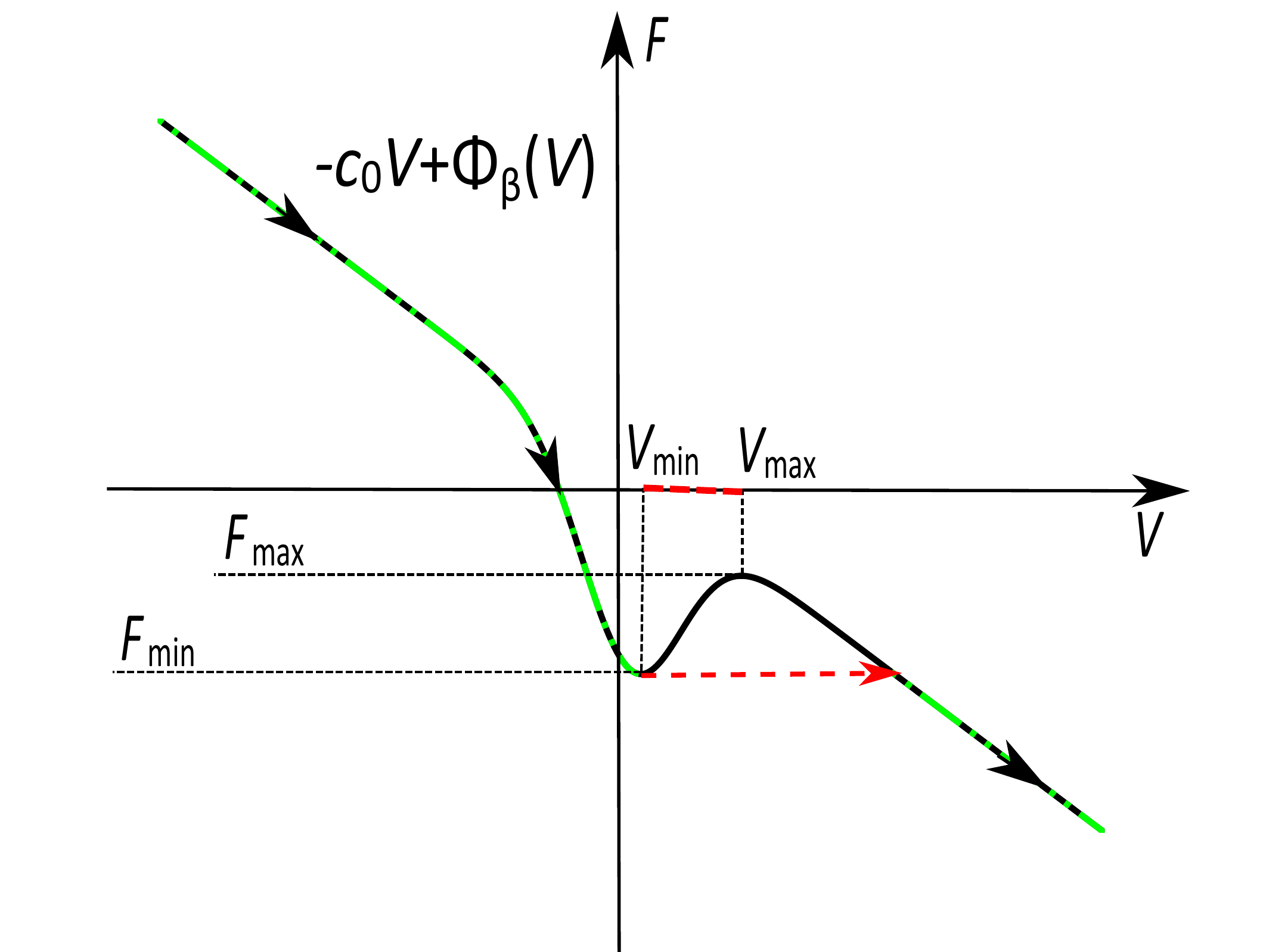}
		\caption{Sketch of the function $F(V)=-c_0V+\Phi_\beta (V)$; $F(V)$ has one local minimum, $F_{\text{min}}=F(V_{\text{min}})$, and one local maximum, $F_{\text{max}}=F(V_{\text{max}})$. {\it Left}: Until $F<F_{\max}$ we stay on the left branch.
			When $F$ exceeds $F_{\max}$ we jump on the right branch; {\it Right}: Until $F>F_{\min}$ we stay on  the right branch;
			When $F$ becomes less than $F_{\min}$ we jump on the left branch. Red arrows on both figures illustrate jumps in velocities.}
		
		\label{fig:two_branches}
	\end{center}
\end{figure}

	\medskip
	
	Now we apply Lemma \ref{exp_stability} to 
	\eqref{semigroup_representation} to obtain the bound 
	\begin{equation}
	\|\eta_\ve(\,\cdot\,,\tau)\|_{L^2}^2 \leq 2M^2 \|\eta_\ve(\,\cdot\,,0)\|^2_{L^2}e^{-\omega \tau}
+\int_0^{\tau}e^{-\omega(\tau-\tau')}\left(
C_\ast\|\eta_\ve(\,\cdot\,,0)\|^4_{L^2}+\delta_\ve\|\eta_\ve(\,\cdot\,,0)\|^2_{L^2}+\delta_\ve\right) d\tau',\label{stabbb_ineq}
	\end{equation}
for $0\leq\tau\leq 1/\ve^r$ ($0<r<1$), where $C_\ast$ depends only on $F(t)$
and $T_{\ast}$ and 
 $\delta_\ve=C(\max_{s\in [t,t+\ve \tau]}\left(|F(t)-F(s)|^2+\tilde o_\ve^2(s)\right) +\ve^2)$ . 
 Let $\alpha(\tau)$ be the right hand side of \eqref{stabbb_ineq}. Consider $s\in [0,\tau]$, by \eqref{stabbb_ineq} we have  
	\begin{equation*}
		\dot{\alpha}(s)= -\omega \alpha(s) +C_{\ast}\|\eta_\ve(\,\cdot\,,0)\|^4_{L^2} +\delta_\ve(s)\|\eta_\ve(\,\cdot\,,0)\|^2_{L^2}+\delta_\ve
		\leq -\omega \alpha(s) +C_{\ast}\alpha^2(s) +\delta_\ve(\tau)\alpha(s)+\delta_\ve(\tau)
	\end{equation*}
and $\alpha(0)=2M^2\|\eta_\ve(\,\cdot\,,0)\|_L^2$. Choosing an arbitrary $q$ from the interval
$$	
q \in(0,\omega/{(2C_\ast)}),
$$	
we see that the function $\overline{\alpha}(s):=q e^{-\omega s/2}+2\delta_\ve(\tau)/\omega$ satisfies for sufficiently small $\ve$ the differential 
inequality
$$
\dot{\overline{\alpha}}(s)+\omega \overline{\alpha}(s) - C_{\ast}\overline{\alpha}^2(s) -\delta_\ve(\tau)\overline{\alpha}(s)-\delta_\ve(\tau)>0.
$$
Therefore, if $\alpha(0)\leq \overline{\alpha}(0)=q+2\delta_\ve(\tau)/\omega$,
then $\alpha(s)\leq \overline{\alpha}(s)$ $\forall$ $0\leq s \leq \tau$. Thus  we have proved that 
$$ 
\|\eta_\ve(\,\cdot\,,\tau)\|_{L^2}^2\leq q e^{-\omega \tau/2}+2\delta_\ve(\tau)/\omega,
$$
provided that $\|\eta_\ve(\,\cdot\,,0)\|_{L^2}\leq \delta$ with $0<\delta<\sqrt{q}/(\sqrt{2}M)$. This concludes the proof of 
Lemma~\ref{l_stab} and Theorem~\ref{theorem:sharp_interface}. 
$\square$

%
%
%

\end{proof}

\medskip

\subsection{Numerical observations. Hysteresis loop.}

In view of the above analysis the equation \eqref{sharp_interface} for large $\beta$ may have many solutions of quite complicated structure (e.g., discontinuous). Therefore, we need to introduce a criterion for selection of the ``correct" solutions that are limiting solutions to the problem with $\ve>0$. This is analogous, {\it e.g.} to viscosity solutions of Allen-Cahn when physical solutions are obtained (by regularization) in the sharp interface limit $\ve\to 0$, \cite{EvaSonSou92}.   

We now introduce such a criterion based on numerical observations and suggested by the stability analysis depicted on Fig. \ref{fig:two_branches}. Define the left velocity interval $\mathcal{B}_{\text{L}}:=(-\infty,V_{\min}]$ and the right velocity interval $\mathcal{B}_{\text{R}}:=[V_{\max},\infty)$ for stable velocities $V$.


\begin{figure}[t]
	\begin{center}
		\includegraphics[width=0.475\textwidth]{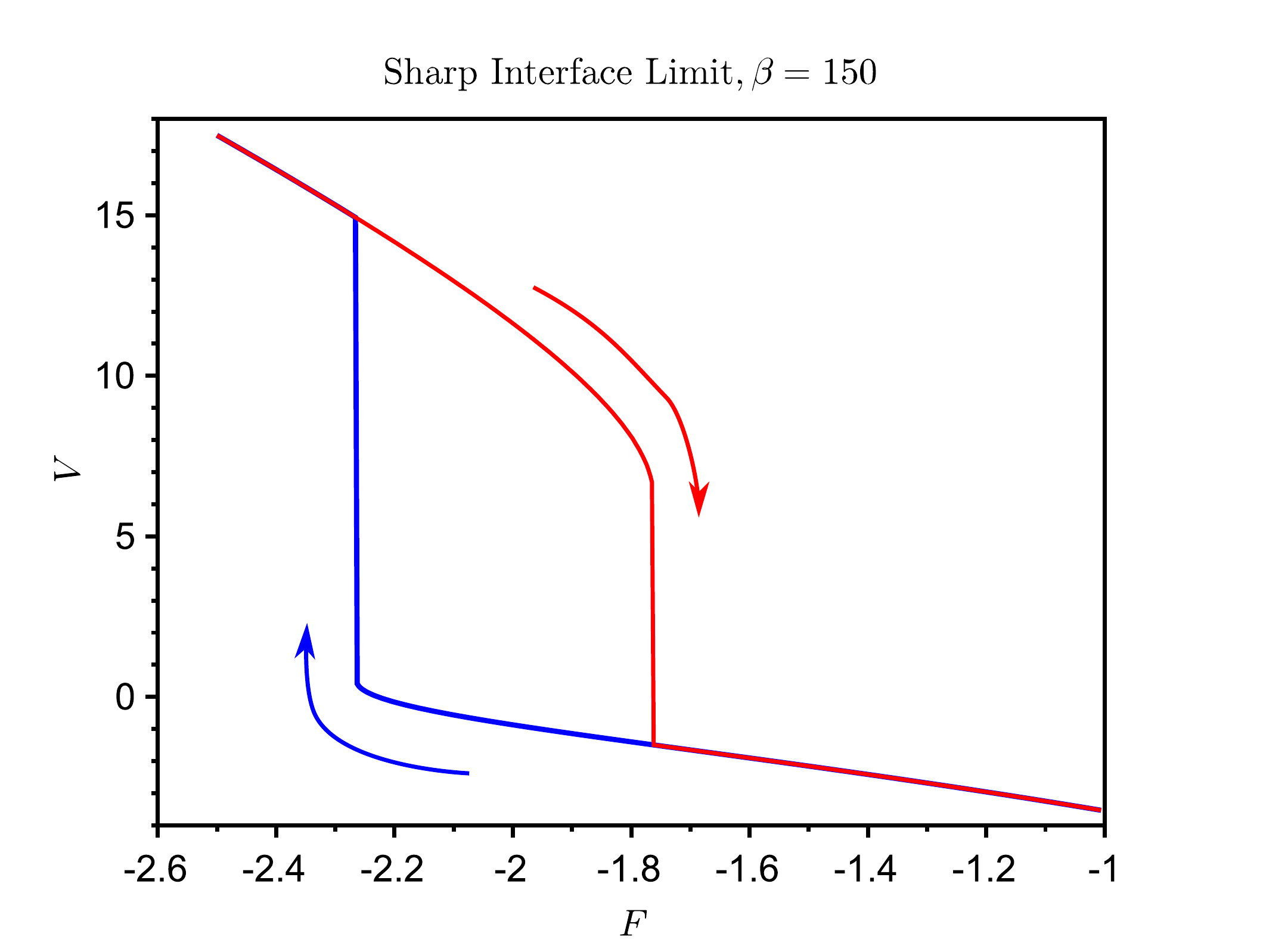}
		\includegraphics[width=0.475\textwidth]{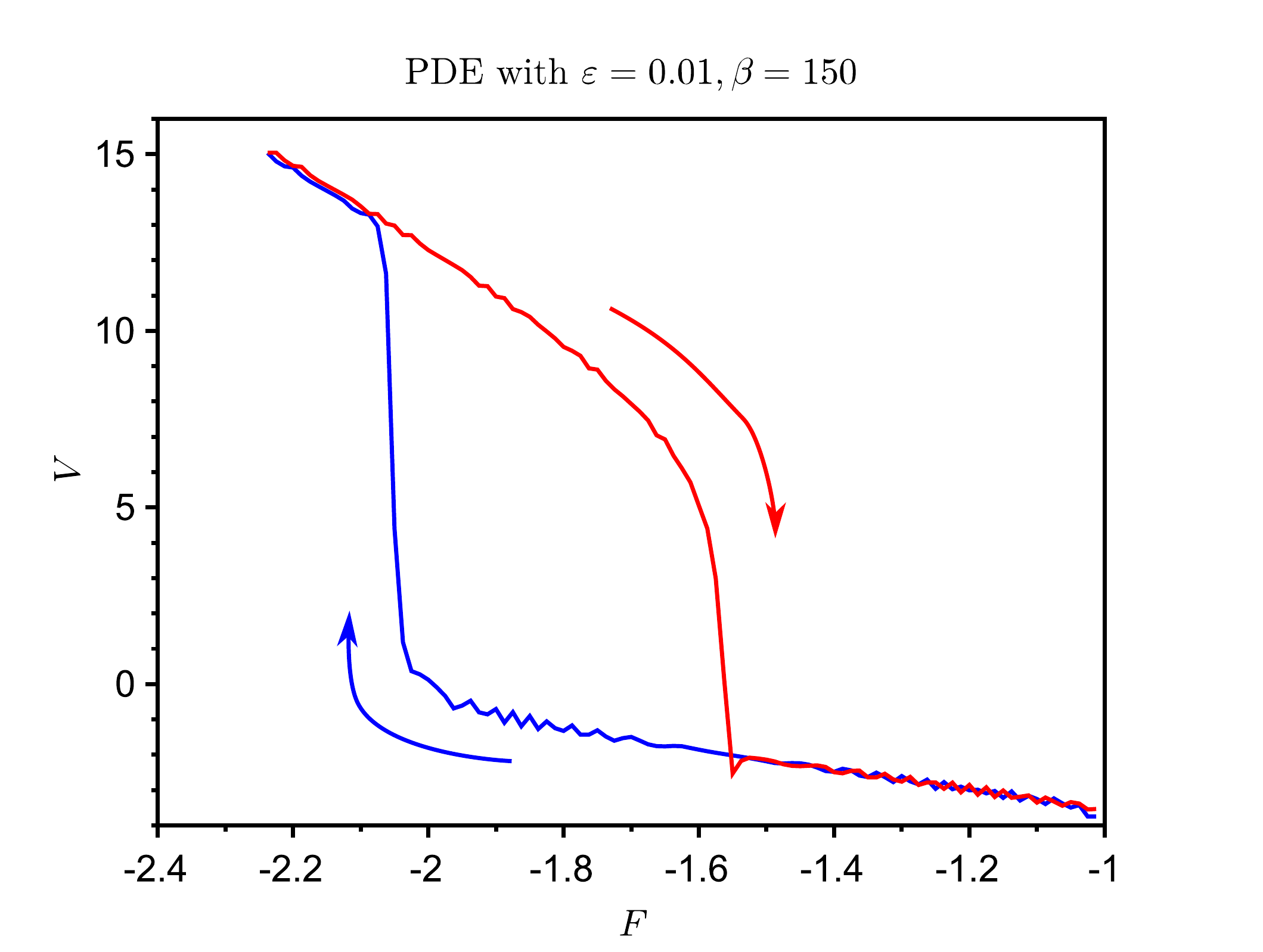}	
		\caption{Hysteresis loop  in the problem of cell motility. Simulations of $V=V(F)$ {\it Left}: \eqref{sharp_interface} 
			Jumping from the left to the right branches and back; 
			{\it Right}: PDE system \eqref{P1}-\eqref{P2}. On both figures arrows show in what direction the system $(V(t),F(t))$ evolves as time $t$ grows; red curve is for $F_{\uparrow}(t)$, blue curve is for $F_{\downarrow}(t)$.}
		\label{fig:hysteresis}
	\end{center}
\end{figure}

Assume for simplicity of presentation that function $F(t)\in C[0,T]$ is strictly increasing. 
Then the solution of \eqref{sharp_interface} is chosen based on the following two criteria

\begin{list}{}{}
	\item {({\bf Cr1})} if $V(0)\in \mathcal{B}_L$, there is a unique $V(t)\in\mathcal{B}_L$ satisfying \eqref{sharp_interface} for all $t\in[0,T]$.
	Note that this $V(t)$ is the only solution which is continuous and never enters the ``forbidden" interval $[V_{\min},V_{\max}]$
	\item {({\bf Cr2})} if $V(0)\in \mathcal{B}_R$, then for any $t\in[0,T]$ the solution $V(t)$ of \eqref{sharp_interface} is chosen in the right velocity interval $\mathcal{B}_R$, unless it is impossible ($F(t)>F_{\min}$, where $F_{\min}$ is defined in Fig. \ref{fig:two_branches}). In the latter case $V(t)$ is chosen from the left velocity interval $\mathcal{B}_L$. 
\end{list}

Intuitively, evolution of the sharp interface velocity can be described as follows. Consider for example the left part of Figure \ref{fig:two_branches} left. As  time evolves, the velocity increases along the right green branch until it reaches $V_{\max}$, then it jumps (along the horizontal red dashed line) to the solution of \eqref{sharp_interface} on the left green branch, and continues increasing along this branch.
%
%
%
%
%
%
%
%
%

Finally, numerical simulations show that the criterion ({\bf Cr2})  predicts {\it hysteresis}  in the system \eqref{sharp_interface}. Consider two forcing terms corresponding to the right and the left parts of Fig. \ref{fig:two_branches}:
\begin{equation*}
F_{\downarrow}(t)=-1.0 +(-2.25+1.0)t,\;\;F_{\uparrow}(t)=-2.25+(-1.0+2.25)t 
\end{equation*} 
and $\beta=150$. For $t\in[0,1]$ both $F_{\downarrow}(t)$ and $F_{\uparrow} (t)$ have the same values but in the opposite order in time $t$. Fig. \ref{fig:hysteresis} (left) depicts the solution of equation \eqref{sharp_interface} according to to the criteria  ({\bf Cr1}) and  ({\bf Cr2}). The red and blue branches coincide when  $F~\notin~[F_{\min},F_{\max}]$. Moreover, a surprising hysteresis loop is observed when $F\in [F_{\min}, F_{\max}]$.


We also performed numerical simulations for the original PDE system \eqref{P1}-\eqref{P2} for $\rho_{\ve}(x,0)=\theta_0(x/\ve)$, $P_{\ve}(x,0)=\theta_0'(x/\ve)$, $\ve=0.01$, and defining $\ve$-interface $x_{\ve}(t)$ as a number such that $\rho_{\ve}(x_{\ve}(t),t)\approx 0.5 (\rho_{\ve}(+\infty,t)+\rho_{\ve}(-\infty,t))$. The branches corresponding to $F_\downarrow$ and $F_{\uparrow}$ 
are depicted on Fig. \ref{fig:hysteresis} (right). The same hysteresis is observed which justifies numerically the above criteria.

\appendix

\section{Auxiliary Inequalities.}
\label{appendix_poincare}

It is well known (see, e.g., \cite{MotSha95}) that under conditions \eqref{condpoten} on the potential $W(\rho)$
the corresponding 
standing wave $\theta_0$ satisfies, for some $\alpha_0>1$,
\begin{equation}\label{exp}
\alpha_0^{-1}e^{-\kappa_- y}<(\theta_0'(y))^2\leq \alpha_0 e^{-\kappa_- y},\;\;\;y\leq 0 \text{ and } \alpha_0^{-1}e^{-\kappa_+ y}<(\theta_0'(y))^2\leq \alpha_0 e^{-\kappa_+ y},\;\;\;y\geq 0,
\end{equation}
where $\kappa_\pm=2\sqrt{W''((1\pm 1)/2)}$. In the case of the symmetric potential  $W(\rho)=\frac14 \rho^2(\rho-1)^2$, $\kappa_-=\kappa_+$ and the standing wave $\theta_0$ is explicitly given
by $\theta_0(y)=\frac{1}{2}(1+\tanh\frac{y}{2\sqrt{2}})$. 

\medskip

\begin{theorem} \label{POINCARE_INEQ}
	({\it Poincar\'{e} inequality}) The following inequality holds
	\begin{equation}\label{poincare}
	\int (\theta_0^\prime)^2 (v-\langle v \rangle)^2dy\leq C_P\int (\theta_0^\prime)^2(v^\prime)^2dy, \;\;\forall\;v\in C^1(\mathbb R),
	\end{equation}
	where
	\begin{equation}
	\langle v \rangle=\frac{1}{\int(\theta_0^\prime)^2dy}\int(\theta_0^\prime)^2vdy.
	\end{equation}
\end{theorem}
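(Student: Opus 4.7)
My plan is to reduce the weighted Poincar\'e inequality to a pair of weighted Hardy-type inequalities on the half-lines $[0,\infty)$ and $(-\infty,0]$, exploiting the explicit exponential decay of $(\theta_0')^2$ recorded in \eqref{exp}. The first observation is that $\langle v\rangle$ is the orthogonal projection of $v$ onto the constants in $L^2((\theta_0')^2\,dy)$; equivalently, it minimizes $c\mapsto\int(\theta_0')^2(v-c)^2\,dy$. Replacing $\langle v\rangle$ by the point value $v(0)$ can therefore only enlarge the left-hand side, and it suffices to prove the inequality for $u(y):=v(y)-v(0)$, which satisfies $u(0)=0$ and can be estimated separately on $\{y\geq 0\}$ and $\{y\leq 0\}$.

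On $[0,\infty)$, the key auxiliary quantity is $G(y):=\int_y^\infty(\theta_0'(z))^2\,dz$, so that $G'=-(\theta_0')^2$. After a truncation at $y=R$ and integration by parts, the boundary term at $R$ drops as $R\to\infty$ thanks to the exponential decay of $G$, while the boundary term at $0$ vanishes because $u(0)=0$. This yields $\int_0^\infty(\theta_0')^2 u^2\,dy=2\int_0^\infty G\,u\,u'\,dy$. Applying Young's inequality to the bilinear integrand, written as $2Guu'=2(u\sqrt{w})\cdot(u'G/\sqrt{w})$ with $w:=(\theta_0')^2$, I get $2Guu'\leq \tfrac12 w u^2+2(G^2/w)(u')^2$; absorbing the first term on the left reduces the problem to bounding $G^2/w$ pointwise by a constant multiple of $w$ itself.

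This pointwise comparison is precisely the point at which the explicit exponential bounds \eqref{exp} enter: $G(y)\leq(\alpha_0/\kappa_+)e^{-\kappa_+ y}$ and $w(y)\geq\alpha_0^{-1}e^{-\kappa_+ y}$ together yield $G(y)^2/w(y)\leq(\alpha_0^4/\kappa_+^2)\,w(y)$ on the positive half-line. Combining with the entirely analogous argument on $(-\infty,0]$ (with $\kappa_+$ replaced by $\kappa_-$), I obtain the Poincar\'e inequality with an explicit constant of the form $C_P=4\alpha_0^4/\min(\kappa_+,\kappa_-)^2$.

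I do not anticipate any substantive obstacle. The only technical point is justifying the integration by parts at infinity, which is handled routinely by the truncation argument together with the exponential decay of $G$. Conceptually, this is a concrete instantiation of the Muckenhoupt--Artola criterion for weighted Hardy inequalities, and the relevant supremum is essentially trivial to evaluate here because $(\theta_0')^2$ decays like a pure exponential at each end, so $\int_r^\infty w\cdot\int_0^r w^{-1}$ and its mirror image are uniformly bounded in $r$.
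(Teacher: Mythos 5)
Your proof is correct and follows essentially the same route as the paper: you reduce to the Friedrich-type inequality for $u=v-v(0)$ (the paper does this via a short contradiction argument resting on the same orthogonality identity, you do it directly via the minimizing property of $\langle v\rangle$), and then prove the half-line estimate by integrating by parts against the tail integral of $(\theta_0')^2$ and invoking the exponential bounds \eqref{exp}, exactly as in the paper's Step 1. The only cosmetic differences are that the paper closes the half-line estimate with Cauchy--Schwarz and division rather than Young's inequality and absorption, and that in your truncation the boundary term at $y=R$ is $-G(R)u(R)^2\le 0$ and can simply be discarded, which is slightly cleaner than appealing to the decay of $G$ alone.
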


\noindent{\it Proof}

{\noindent{STEP 1} ({\it Friedrich's inequality}). Let $u\in C^1(\mathbb R)$ satisfy $u(0)=0$.
	Then we show that  the inequality
	\begin{equation}\label{friedrich}
	\int (\theta_0^\prime)^2u^2dy\leq C_F\int(\theta_0^\prime)^2(u^\prime)^2dy,
	\end{equation}
	holds with  $C_F$ independent of $u$.
	Indeed,
	\begin{equation*}
	\begin{aligned}
	\int_0^{\infty} e^{-\kappa_+ y} u^2 dy &=2 \int_0^{\infty} \left(\int_{y}^{\infty}e^{-\kappa_+ t}dt\right) u^\prime\, u dy
	\leq 2\int_{0}^{\infty}\left(\int_{y}^{\infty}e^{-\kappa_+ t}dt\right) |u^\prime||u|dy\nonumber\\
	&=\frac{2}{\kappa_+}\int_0^{\infty}e^{-\kappa_+ y}|u^\prime||u|dy\leq \frac{2}{\kappa_+}\left(\int_{0}^{\infty} e^{-\kappa_+ y}(u^\prime)^2 dy\right)^{1/2}\left(\int_{0}^{\infty} e^{-\kappa_+ y}u^2 dy\right)^{1/2}.
	\end{aligned}
	\end{equation*}
	Thus,
	\begin{equation}
	\int_0^{\infty} (\theta_0^\prime)^2 u^2 dy\leq \frac{2\alpha_0^2}{\kappa_+}\left(\int_{0}^{\infty} (\theta_0^\prime)^2(u^\prime)^2 dy\right)^{1/2}\left(\int_{0}^{\infty} (\theta_0^\prime)^2u^2 dy\right)^{1/2}.\nonumber
	\end{equation}
	Dividing this inequality by $\left(\int_{0}^{\infty} (\theta_0^\prime)^2u^2 dy\right)^{1/2}$, and than taking square 
	of both sides we get
	\begin{equation}\label{Ap_eq_1}
	\int_{0}^{\infty}(\theta_0^\prime)^2u^2dy \leq \frac{4\alpha_0^4}{\kappa_+^2}\int_0^{\infty}(\theta_0^\prime)^2(u^\prime)^2 dy  
	\end{equation}
	Similarly  we obtain
	\begin{equation}\label{Ap_eq_2}
	\int_{-\infty}^{0}(\theta_0^\prime)^2u^2dy \leq \frac{c_0^4}{\kappa_-^2}\int_{-\infty}^{0}(\theta_0^\prime)^2(u^\prime)^2 dy,
	\end{equation}
	Then adding \eqref{Ap_eq_1} to \eqref{Ap_eq_1} yields  \eqref{friedrich}. \\
	\noindent{STEP 2.} We prove the Poincar\'{e} inequality \eqref{poincare} by contradiction. Namely, assume that there exists a sequence $v_n\in C^1(\mathbb R)\cap L^{\infty}(\mathbb R)$ such that
	\begin{equation}\nonumber
	\int (\theta_0^\prime)^2v^2_ndy =1, \;\;\int(\theta_0^\prime)^2 v_n dy=0\text{ and }\int (\theta_0^\prime)^2 (v_n^\prime)^2dy\rightarrow 0.
	\end{equation}
	Apply Friedrich's inequality \eqref{friedrich} to functions $v_n(y)-v_n(0)$:
	\begin{equation}\nonumber
	\int (\theta_0^\prime)^2(v_n(y)-v_n(0))^2 dy \leq C_F \int (\theta_0^\prime)^2 (v_n^\prime)^2 dy\rightarrow 0.
	\end{equation}
	On the other hand,
	\begin{equation}\nonumber
	\int (\theta_0^\prime)^2(v_n(y)-v_n(0))^2 dy=\int (\theta_0^\prime)^2 v_n^2dy +v_n^2(0)\int(\theta_0^\prime)^2 dy\geq \int (\theta_0^\prime)^2 v_n^2dy.
	\end{equation}
	Hence,
	\begin{equation}\nonumber
	\int (\theta_0^\prime)^2 v_n^2 dy \rightarrow 0
	\end{equation}
	which contradicts the normalization $\int (\theta_0^\prime)^2 v_n^2 dy=1$. The Theorem is proved.
	{$\square$}
	
\begin{corollary} 
		\label{corAp}
		Let $u\in H^1(\mathbb{R})$, then 
		\begin{equation}
		\label{FinalTechIneqA}
		\|u-\langle u \rangle_{\theta_0^\prime}\, \theta_0^\prime\|_{H^1}^2\leq C\int (\theta_0^\prime)^2(v^\prime)^2dy, \ \text{where}\  
		\langle u \rangle_{\theta_0^\prime}=\frac{1}{\int(\theta_0^\prime)^2dy}\int u \theta_0^\prime dy
		\ \text{and}\ v=u/\theta_0^\prime, 
		\end{equation}
with a constant $C$ independent of $u$.
	\end{corollary}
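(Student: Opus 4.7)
Set $m := \langle u\rangle_{\theta_0^\prime}$ for brevity. Because $u\in H^1(\mathbb R)$ and $\theta_0^\prime>0$ everywhere (the standing wave is strictly monotone), the function $v=u/\theta_0^\prime$ is well defined and the identity $u-m\theta_0^\prime = (v-m)\,\theta_0^\prime$ holds pointwise. Note also that $\int u\,\theta_0^\prime\,dy = \int v\,(\theta_0^\prime)^2\,dy$, so $m$ coincides with the weighted mean $\langle v\rangle$ from Theorem~\ref{POINCARE_INEQ}.

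The $L^2$-part of the estimate is immediate: by Theorem~\ref{POINCARE_INEQ},
\begin{equation*}
\|u-m\theta_0^\prime\|_{L^2}^2
=\int (v-m)^2(\theta_0^\prime)^2\,dy
\leq C_P\int (\theta_0^\prime)^2(v^\prime)^2\,dy.
\end{equation*}
For the derivative part, compute $(u-m\theta_0^\prime)^\prime = v^\prime\theta_0^\prime + (v-m)\theta_0^{\prime\prime}$, so
\begin{equation*}
\|(u-m\theta_0^\prime)^\prime\|_{L^2}^2
\leq 2\int (\theta_0^\prime)^2(v^\prime)^2\,dy + 2\int (v-m)^2(\theta_0^{\prime\prime})^2\,dy.
\end{equation*}
The first integral is already in the desired form. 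To handle the second I would show the pointwise bound $|\theta_0^{\prime\prime}(y)|\leq M\,\theta_0^\prime(y)$ for all $y\in\mathbb R$ with some constant $M$, and then apply Theorem~\ref{POINCARE_INEQ} once more.

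The only nonroutine step is this pointwise comparison. The plan is to combine the ODE $\theta_0^{\prime\prime}=W^\prime(\theta_0)$ with the first integral $(\theta_0^\prime)^2=2W(\theta_0)$, which yields $|\theta_0^{\prime\prime}|/\theta_0^\prime = |W^\prime(\theta_0)|/\sqrt{2W(\theta_0)}$. Near the wells $\rho=0,1$ condition \eqref{condpoten} gives $W(\rho)\sim \tfrac12 W^{\prime\prime}(\cdot)(\rho-\rho_\ast)^2$ and $W^\prime(\rho)\sim W^{\prime\prime}(\cdot)(\rho-\rho_\ast)$, so the ratio stays bounded (in fact tends to $\sqrt{W^{\prime\prime}(\rho_\ast)}$) as $y\to\pm\infty$; on any compact $y$-interval it is a continuous function of $y$ with $\theta_0^\prime$ bounded away from $0$, hence also bounded. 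Combining these two regimes gives the required global bound. Then
\begin{equation*}
\int(v-m)^2(\theta_0^{\prime\prime})^2\,dy
\leq M^2\int (v-m)^2(\theta_0^\prime)^2\,dy
\leq M^2 C_P\int (\theta_0^\prime)^2(v^\prime)^2\,dy,
\end{equation*}
and adding the $L^2$ and derivative bounds yields \eqref{FinalTechIneqA} with $C=(1+2+2M^2)C_P$.
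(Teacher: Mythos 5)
Your proposal is correct and follows essentially the same route as the paper: both reduce \eqref{FinalTechIneqA} to the weighted Poincar\'e inequality of Theorem \ref{POINCARE_INEQ} via the identity $u-\langle u\rangle_{\theta_0^\prime}\,\theta_0^\prime=(v-\langle v\rangle)\,\theta_0^\prime$ together with a pointwise comparison of $\theta_0^{\prime\prime}$ with $\theta_0^\prime$. The only cosmetic difference is that you derive the bound $|\theta_0^{\prime\prime}|\leq M\theta_0^\prime$ from the ODE $\theta_0^{\prime\prime}=W^\prime(\theta_0)$ and its first integral, whereas the paper obtains the same comparison by pairing exponential upper bounds on $(\theta_0^{\prime\prime})^2$ with the lower bounds on $(\theta_0^\prime)^2$ in \eqref{exp}; your write-up simply supplies the elementary computations (and the product-rule estimate) that the paper leaves implicit.
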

	
\begin{proof}	
	Recall that standing waves $\theta_0$ of the Allen-Cahn equation along with  \eqref{exp} satisfy 
	\begin{equation*}
	\alpha_1^{-1}e^{-\kappa_- y}<(\theta_0^{\prime\prime}(y))^2\leq \alpha_1 e^{-\kappa_- y},\;\;\;y\leq 0 \text{ and } \alpha_1^{-1}e^{-\kappa_+ y}<(\theta_0^{\prime\prime}(y))^2\leq \alpha_1 e^{-\kappa_+ y},\;\;\;y\geq 0,
	\end{equation*}
	for some $\alpha_1>0$. Then applying Theorem \ref{POINCARE_INEQ} to $v=u/\theta_0^\prime$
	and using density of $C^1(\mathbb{R})$ in $H^1(\mathbb{R})$ one derives \eqref{FinalTechIneqA}.
\end{proof} {$\square$}

\section{On spectral properties of operator $\mathcal{T}$ in the case $W(\rho)=\frac{1}{4}\rho^2(\rho-1)^2$}
\label{appendix_integral_equation}

 In this appendix we study the set of stable of velocities $\mathcal{S}$, i.e., the set of such $V\in \mathbb R$ that the point spectrum of the linearized operator $\mathcal{T}(V)$ defined by \eqref{def_of_T} lies in the right half of the complex plane. We restrict ourselves here to the case $W(\rho)=\frac14 \rho^2(\rho-1)^2$.
 
   Theorem \ref{spectral_theorem} implies that if $\text{Re}\lambda\leq 0$, then $\lambda$ solves the equation \eqref{point_spectrum}. Though \eqref{point_spectrum} is a scalar equation with respect to  $\lambda\in \mathbb C$, the evaluation of its left hand side requires solution of the PDE \eqref{psi_in_formal}. By means of  Fourier analysis, we can avoid solving the PDE and rewrite \eqref{point_spectrum} in the form  
\begin{equation}\label{spectrum_main}
\int_{\mathbb R} \frac{-i\beta k \tilde{\theta_0'}\overline{\widetilde{(\theta_0')^2}} }{(k^2-iVk+1)(k^2-iVk+(1-\lambda))}dk=1,
\end{equation}
where $\tilde{\theta_0'}$ and ${\widetilde{(\theta_0')^2}}$ are Fourier transforms of $\theta_0'$ and $(\theta_0')^2$, respectively. In the case $W(\rho)=\frac14 \rho^2(1-\rho)^2$:
\begin{equation}\label{fourier_transform}
\tilde{\theta_0'}(k):=\sqrt{\pi}\text{csch} (\sqrt{2}\pi k),\;\;\;\widetilde{(\theta_0')^2}(k)=\frac{\sqrt{2\pi}}{12}k(2k^2+1)\text{csch}(\sqrt{2}\pi k).
\end{equation}
Introduce $\chi(k):=-\frac{\beta\pi \sqrt{2}}{12} k^2 (2k^2+1)\text{csch}^2 (\sqrt{2}\pi k)$, then equation \eqref{spectrum_main} becomes
\begin{equation}\label{spectrum_main_with_chi}
\int_{\mathbb R} \frac{i \chi(k)}{(k^2-iVk+1)(k^2-iVk+(1-\lambda))}dk=1.
\end{equation}

Next, consider $\lambda=\lambda_r+i\lambda_i$. Denote by $\mathcal{H}_\lambda(k)$ the integrand in \eqref{spectrum_main_with_chi}
and rewrite it in the form   
\begin{eqnarray*}
	\mathcal{H}_{\lambda_r+i\lambda_i}(k)&=&-\chi(k)\frac{\left[Vk(k^2+\mu)+(k^2+1)(Vk+\lambda_i)\right]}{\left((k^2+1)^2+V^2k^2\right)\left((k^2+\mu)^2+(Vk+\lambda_i)^2)\right)}\\
	&&+i\chi(k)\frac{\left[(k^2+1)(k^2+\mu)-Vk(Vk+\lambda_i)\right]}{\left((k^2+1)^2+V^2k^2\right)\left((k^2+\mu)^2+(Vk+\lambda_i)^2)\right)},
\end{eqnarray*}
where $\mu=1-\lambda_r$.

\begin{proposition}\label{prop:stability}
	\noindent(i) Assume $V<\sqrt{2}$. If $\Phi'_\beta(V)<c_0$, then all eigenvalues $\lambda \in \sigma_p(\mathcal{T}(V))$ have positive real part, $\text{Re}\lambda>0$. 
	
	\noindent (ii) There exists $\bar{V}>0$ such that for all $V>\bar{V}$ all eigenvalues of $\mathcal{T}(V)$ have positive real part. 
\end{proposition}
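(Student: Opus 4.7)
The plan is to apply Theorem \ref{spectral_theorem} in the form of the scalar integral equation \eqref{spectrum_main_with_chi} and extract a contradiction by a sign/parity analysis tailored to eigenvalues with $\text{Re}\,\lambda \le 0$. Writing $\lambda = \lambda_r + i\lambda_i$ with $\mu := 1 - \lambda_r \ge 1$ and expanding the denominator as
\begin{equation*}
(k^2 - iVk + 1)(k^2 - iVk + 1-\lambda) = A(k) - iB(k),
\end{equation*}
with $A = (k^2+1)(k^2+\mu) - V^2 k^2 - Vk\lambda_i$ and $B = Vk(2k^2+1+\mu)+\lambda_i(k^2+1)$, I would split $A$ and $B$ into their even and odd parts in $k$ (denote these $A_e,A_o,B_e,B_o$) and use the $k$-parity of $\chi$ to cancel the odd-in-$k$ contributions. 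Taking real and imaginary parts of $\int i\chi/D\,dk = 1$ then yields two coupled scalar identities.

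For part (i), the core algebraic observation is that the even part
$A_e(k) = (k^2+1)(k^2+\mu) - V^2 k^2$, regarded as the quadratic $u^2 + (1+\mu-V^2)u + \mu$ in $u := k^2\ge 0$, is strictly positive on $\mathbb{R}$ whenever $V^2 < 2$ and $\mu \ge 1$. Indeed, its sum of roots $-(1+\mu-V^2) < 0$ and product of roots $\mu > 0$ force both roots to be non-positive, so $A_e \ge \mu > 0$ on $u \ge 0$. Combining this positivity with the sign $\chi \le 0$ (strict away from $k=0$), one of the two derived identities becomes an integral of a strictly signed function set equal to zero, contradicting eigenvalues with $\lambda_i \ne 0$ and $\text{Re}\,\lambda\le 0$. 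Real eigenvalues $\lambda \le 0$ are then handled through the equivalent characterization $\zeta(\lambda) = c_0$ from Theorem \ref{spectral_theorem}, where $\zeta(\lambda) = ((\mathcal A-\lambda)^{-1}\partial_y\psi,(\theta_0')^2)_{L^2}$: using $\zeta(0)=\Phi_\beta'(V) < c_0$ by hypothesis, $\zeta(\lambda) \to 0$ as $\lambda \to -\infty$, together with monotonicity of $\zeta$ on $(-\infty,0]$ derived by differentiating the Fourier representation and again invoking $A_e > 0$, one precludes $\zeta$ from crossing $c_0$.

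For part (ii), a direct size estimate on the integrand suffices. For $\text{Re}\,\lambda\le 0$ the factorization gives $|D(k)|^2 = [(k^2+1)^2+V^2k^2]\cdot|k^2-iVk+1-\lambda|^2 \ge [(k^2+1)^2+V^2k^2][(k^2+\mu)^2+V^2k^2]$, so $|D(k)| \ge c\,V^2 k^2$ on the support of $\chi$ away from the origin, while the explicit factor $k^2$ in $\chi(k)$ compensates the $1/(V^2k^2)$ blowup near $k=0$. Using in addition the exponential decay inherited from $\text{csch}^2(\sqrt 2\pi k)$, one obtains a bound $\bigl|\int i\chi/D\,dk\bigr| \le C/V^2$, uniform in $\lambda$ with $\text{Re}\,\lambda \le 0$, which is incompatible with the RHS being $1$ once $V$ is large enough.

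The main obstacle is the monotonicity/non-crossing step for real $\lambda \le 0$ in part (i): the endpoint values $\zeta(0) < c_0$ and $\zeta(-\infty) = 0$ alone do not preclude $\zeta$ from attaining $c_0$ on the interior of $(-\infty, 0]$, and the argument goes through only because the positivity $A_e > 0$ guaranteed by $V^2 < 2$ furnishes a signed $\partial_\lambda\zeta$. Once this monotonicity is in hand, the parity manipulations ruling out complex eigenvalues and the asymptotic estimate in part (ii) reduce to elementary computations.
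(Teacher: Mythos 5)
Your overall route is the paper's: reduce to the scalar Fourier identity \eqref{spectrum_main_with_chi}, kill the imaginary part by a sign argument valid for $V^2<2$ to force eigenvalues onto the real axis, then exclude real $\lambda\le 0$ by monotonicity of $\zeta$ on $(-\infty,0]$ together with $\zeta(0)=\Phi_\beta'(V)<c_0$; part (ii) by showing the integral is $O(1/V)$ uniformly for $\operatorname{Re}\lambda\le 0$. Your key positivity claim $A_e=(k^2+1)(k^2+\mu)-V^2k^2>0$ for $V^2<2$, $\mu\ge 1$ is correct and does imply the paper's sign condition on its numerator $N$ (since $N=-A_e-[(k^2+1)(k^2+\mu)+(k^2+\mu)^2+\lambda_i^2]$). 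However, the parity step is more delicate than "cancel the odd contributions": $|D(k)|^2$ contains the factor $(k^2+\mu)^2+(Vk+\lambda_i)^2$, which is \emph{not} even in $k$, so you must combine $k$ and $-k$ over a common denominator. When you do, the imaginary-part identity acquires an overall prefactor $\lambda_i V$ multiplying a strictly signed integrand --- that prefactor is essential, since otherwise your "strictly signed integral equals zero" would also exclude the real nonpositive eigenvalues whose existence is asserted in Proposition \ref{prop_unstable}. Whether that prefactor appears hinges on the $k$-parity of the kernel; be careful that the kernel multiplying $1/D$ must be odd in $k$ for the computation (and indeed for $\int i\chi/D\,dk$ to be real on real $\lambda$), so do not take the displayed formula for $\chi$ at face value.

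Two concrete gaps remain. First, part (i) claims the conclusion for all $V<\sqrt 2$, but your argument only covers $0<|V|<\sqrt 2$: for $V=0$ the prefactor $\lambda_i V$ vanishes and nothing is forced, and $V\le-\sqrt2$ is outside your hypothesis $V^2<2$. The paper closes this by a separate observation (its formula \eqref{integral_negative}) that $\operatorname{Re}\zeta(\lambda)\le 0$ whenever $V\le 0$, so the equation $\zeta=1$ has no root there at all; you need this (or an equivalent) case. Second, in part (ii) the inequality $|k^2-iVk+1-\lambda|^2=(k^2+\mu)^2+(Vk+\lambda_i)^2\ge (k^2+\mu)^2+V^2k^2$ is false (take $\lambda_i=-Vk$); you should instead drop the $(Vk+\lambda_i)^2$ term and use $|D|^2\ge[(k^2+1)^2+V^2k^2](k^2+\mu)^2\ge V^2k^2(k^2+1)^2$, which still yields $|\zeta|\le C/|V|\to 0$ and the desired conclusion. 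Finally, the monotonicity of $\zeta$ on the real axis is asserted but not computed; the actual derivative condition is $\Psi_k'(\mu)<0$, which again uses $V^2<2$ (not literally $A_e>0$), so this step should be carried out explicitly.
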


\begin{remark}
	Condition $V<\sqrt{2}$ is a technical assumption in the proof which guarantees that integral \eqref{integral_negative} is negative.  However, numerical simulations show that  integral \eqref{integral_negative} is negative for all $V$.  
\end{remark}

\proof $\;$\\
Part {\it(i)}. First, assume $0<|V|<\sqrt{2}$. We prove that if $\lambda=\lambda_r+i\lambda_i$ with $\lambda_r<1$ ($\mu>0$) is a root of equation $\zeta(\lambda)=1$, then $\lambda_i=0$. In particular, the condition $\lambda_r<1$ guarantees that $\lambda\notin \sigma_{\text{ess}}(\mathcal{A}(V))$.

Rewrite the imaginary part of $\zeta(\lambda_r+i\lambda_i)$:
\begin{eqnarray*}
	\text{Im}\zeta(\lambda)&=&\int\limits_{-\infty}^{\infty}\mathcal{H}_\lambda(k)dk\\
	&=&\lambda_i V \int\limits_{0}^{\infty} \frac{\chi(k)(-2(k^2+1)(k^2+\mu)+V^2k^2-(k^2+\mu)^2-\lambda_i^2)}{((k^2+1)^2+V^2k^2)((k^2+\mu)^2+(Vk+\lambda_i)^2)((k^2+\mu)^2+(Vk-\lambda_i)^2)}dk.
\end{eqnarray*}
Since the numerator is the difference between
$(V^2-2)k^2 $
and a positive expression, 
we obtain $\text{Im}\zeta(\lambda)\neq 0$ for $\lambda_i \neq 0$. 

\medskip 

Take $\lambda_i=0$ and rewrite the real part of $\zeta(1-\mu)$: 
\begin{equation}\label{prop_C2_step2}
\text{Re}\zeta(1-\mu)= - V \int\limits_{-\infty}^{\infty} k\chi(k)\frac{2k^2+1+\mu}{((k^2+1)^2+V^2k^2)((k^2+\mu)^2+V^2k^2)}dk.
\end{equation}
The function $\text{Re}\zeta(1-\mu)$ is obviously monotone for $\mu>0$. Indeed, denote by $\Psi_k(\mu)$ the term of integrand in \eqref{prop_C2_step2} which depends on $\mu$:
\begin{equation*}
\Psi_k(\mu)=\frac{2k^2+1+\mu}{((k^2+\mu)^2+V^2k^2)}.
\end{equation*}
Compute $\Psi'_k(\mu)$:
\begin{equation}
\Psi'_k(\mu)=\frac{ (V^2-2-4\mu)k^2-k^4-2\mu-\mu^2}{((k^2+\mu)^2+V^2k^2)^2}.
\end{equation}
If $|V|<\sqrt{2}$, then $\Psi'_{k}(\mu)<0$, which proves the monotonicity of  $\text{Re}\zeta(1-\mu)$.

Finally, assume by contradiction that $\beta c_0^{-1}\Phi(V)<1$, but there exists an eigenvalue $\lambda_0$ with zero or negative real part, $\text{Re}\lambda_0\leq 0$. Then  $\zeta(\text{Re}\lambda_0)=\zeta(\lambda_0)\leq\zeta(0)<1$ that contradicts $\zeta(\lambda)=1$. 

 Consider $V\leq 0$.  Then $\text{Re}\zeta(\lambda_0)\leq 0$. Indeed, observe that  
\begin{equation}\label{integral_negative}
\text{Re}\zeta(\lambda_0)=\int\limits_{0}^{\infty} \frac{-4Vk\chi(k)\left[(2k^2+1+\mu)((k^2+\mu)^2+V^2k^2)\right]}{((k^2+1)^2+V^2k^2)\left((k^2+1)^2+(Vk-\lambda_i)^2\right)\left((k^2+\mu)^2+(Vk+\lambda_i)^2)\right)}dk.
\end{equation} 
The integral in \eqref{integral_negative} is negative or zero and, thus, cannot be equal to $1$, so equality \eqref{point_spectrum} does not hold and, in particular, there does not exist eigenvalues with negative real part. 
Thus, part (i) is proved.

 Part {\it (ii)} follows immediately from \eqref{integral_negative}.

$\square$

\section*{References}

\bibliography{cell}

\end{document}